\documentclass[12pt]{article}

\usepackage[utf8]{inputenc}
\usepackage[T1]{fontenc}
\usepackage{amssymb,latexsym}
\usepackage{amsmath}
\usepackage{amsthm}
\usepackage{thmtools}
\usepackage{graphicx}
\usepackage{xcolor}
\usepackage{yfonts}
\usepackage[shortlabels]{enumitem}
\usepackage{cleveref}

\usepackage{listings}%
\usepackage{esvect} 

\usepackage{subcaption}
\usepackage{url}

\usepackage{diagbox}

\def\N{\mathbb N}

\def\uu{\mathbf{u}}
\def\vv{\mathbf{v}}

\def\Lang{\mathcal{L}}
\def\Bext{\mathrm{Bext}}
\def\Lext{\mathrm{Lext}}
\def\Rext{\mathrm{Rext}}

 \newtheorem{thm}{Theorem}

 \newtheorem{coro}[thm]{Corollary}
 \newtheorem{corollary}[thm]{Corollary}
 \newtheorem{conjecture}[thm]{Conjecture}
 \newtheorem{lemma}[thm]{Lemma}

 \newtheorem{proposition}[thm]{Proposition}
 
 \newtheorem{defi}[thm]{Definition}
 \newtheorem{rem}[thm]{Remark}
 \newtheorem{example}[thm]{Example}

 \renewcommand{\ge}{\geqslant}

\title{Generalized reflection complexity}
\author{Adam Bla\v zek \and Lubom\'ira Dvo\v r\'akov\'a
\and Edita Pelantov\'a}

\begin{document}

\maketitle

\begin{abstract}

The reflection complexity of a~sequence, introduced recently by Allouche, Campbell, Li, Shallit, and Stipulanti, counts the factors of a~given length up to reversal.  
We generalize this notion considering a~group $G$ generated by antimorphisms over a given alphabet $\mathcal A$. 
The $G$-complexity of a~sequence over $\mathcal A$ counts the factors of a~given length up to the action of $G$. 
For $G$ generated by the reflection alone, the $G$-complexity coincides with the reflection complexity.
We characterize eventual periodicity in terms of the $G$-complexity. 
Then we  focus on  the binary alphabet and  the group $H$ generated by reflection and the antimorphism exchanging letters.
We provide a~simple formula for the $H$-complexity in the case of $H$-rich sequences and determine the precise values for complementary symmetric Rote sequences and generalized binary Thue-Morse sequences.
Finally, we describe the aperiodic recurrent sequences having the minimum $H$-complexity -- they are exactly the complementary symmetric Rote sequences, 
while the binary sequences with the minimum reflection complexity correspond to the Sturmian sequences. 

\end{abstract}

\section{Introduction}
The {\it factor complexity} $\mathcal{C}(n)$ of a~sequence $\mathbf u$
counts the distinct factors of $\mathbf u$ of length $n$. It is one of the
basic tools for measuring the ``disorder'' of a~sequence. The classical
theorem of Morse and Hedlund~\cite{MorseHedlund1938} states that $\mathbf u$ is
eventually periodic if and only if $\mathcal{C}(n+1)=\mathcal{C}(n)$ for
some $n\in\mathbb N$. Hence every aperiodic sequence satisfies
$\mathcal{C}(n)\geq n+1$ for all $n$. The aperiodic sequences
attaining this bound are called Sturmian sequences. 

The relation between periodicity and complexity and the characterization of aperiodic sequences with the minimum complexity may be generalized to complexity functions other than the factor complexity.

Recently, Allouche, Campbell, Li, Shallit, and Stipulanti~\cite{AlCaLiShSt2025}
introduced the {\it reflection complexity} $r$ of a sequence. It counts the factors of length $n$ up to reversal, so a factor $w$ and its mirror image
$R(w)$ are counted once. Among many other results, they computed the reflection complexity for several classical
families, e.g., Sturmian sequences, complementary symmetric (CS) Rote sequences~\footnote{sequences with factor complexity ${\mathcal C}(n)=2n$ for all $n\in \mathbb N,\ n\geq 1$, whose language is closed under the exchange of letters} and the Thue--Morse sequence. Moreover, they characterized Sturmian sequences by the reflection complexity. They also showed that, unlike the factor complexity,
the reflection complexity need not be non-decreasing. 
Dvořáková and Pelantová~\cite{DvPe2026}
characterized eventually periodic sequences by the condition
$r(n+2)=r(n)$ for some $n$, which is an analogue of the Morse--Hedlund
theorem. They also described all sequences satisfying $r(n+2)=r(n)+1$ for
every $n$. 
Besides Sturmian sequences, these include images of Sturmian
sequences under a~simple morphism over a~ternary alphabet.

The reflection complexity is one instance of a broader idea: counting factors
up to a group action. Charlier, Puzynina, and Zamboni~\cite{CPZ} introduced
the {\it group complexity}. For a sequence
$\omega=(G_n)_{n\geq 1}$ of subgroups of the symmetric groups $S_n$, the
group complexity $p_\omega(n)$ counts the factors of length $n$ up to
permutations of their positions by elements of $G_n$. This notion
interpolates between the factor complexity (all $G_n$ trivial) and the abelian
complexity (all $G_n=S_n$). Taking $G_n=\langle \rho_n \rangle$, where
$\rho_n$ reverses the order of positions, gives exactly the reflection
complexity. 

For the sake of completeness, let us add that the formal concept and the notion ``abelian complexity'' itself for general words were introduced and systematized  by Richomme, Saari,  and Zamboni~\cite{RiSaZa2010}
 and~\cite{RiSaZa2011}, although the idea had already appeared implicitly in the work of  Coven and  Hedlund \cite{CovenHedlund1973}.

Launer, Puzynina, and Voloshinova~\cite{Pu2026} recently studied
which values the group complexity can take. For every length $n$, they ask
whether each integer between the abelian and the factor complexity is
attained for a suitable subgroup of $S_n$. They proved that this universal group complexity property holds for
every Sturmian word.

In this paper, we generalize the notion of  reflection complexity  in the following way: for a given group $G$ generated by  
antimorphisms over an alphabet $\mathcal{A}$, the $G$-complexity $\mathcal{C}_G(n)$ 
counts the
factors of length $n$ up to the action of $G$. If  $G$ is generated by the reflection $R$ only, then  $\mathcal{C}_G$
coincides with the reflection complexity. As soon as $G$
contains an antimorphism acting non-trivially on letters, $\mathcal C_G$ is 
no longer a group complexity in the sense of Charlier, Puzynina, and Zamboni~\cite{CPZ}.  

The paper is organized as follows. We first derive properties of the $G$-complexity that hold for an arbitrary
group $G$. In particular, we show that  
$\mathcal C_G(n+2)\geq \mathcal C_G(n)$ for all $n$ (Proposition \ref{prop:Neklesa}).  Consequently, a~sequence  is eventually periodic if and only if 
$\mathcal C_G(n+2)=\mathcal C_G(n)$ for all sufficiently large $n$ (Corollary \ref{cor:periodicity}). Then we
define a condition on the group $G$, called Property~$\textswab{P}$, and prove  that  under Property~$\textswab{P}$, a~sequence is aperiodic if and only if
$\mathcal C_G(n+2)\geq \mathcal C_G(n)+1$ for all $n\geq 1$ (Proposition \ref{pro:neperiodicity}).


The rest of the paper concerns the binary alphabet and the  group $H$ generated by reflection and the antimorphism exchanging letters. The group $H$ satisfies Property~$\textswab{P}$.   For $H$-rich
sequences we obtain a~simple formula that relates $H$-complexity  $\mathcal C_H$ and  the factor complexity $\mathcal C$ (Proposition \ref{pro:CGforGrich}). This formula allows us to 
determine the $H$-complexity for CS Rote sequences
and for generalized binary  Thue--Morse sequences.
Our main result describes the aperiodic recurrent sequences with the slowest possible growth of $\mathcal C_H$: a~recurrent binary sequence
satisfies $\mathcal C_H(n+2)=\mathcal C_H(n)+1$ for every $n$ if and only
if it is a~CS Rote sequence (Theorem~\ref{thm:aperiodicRH}). 
Thus, for the $H$-complexity, CS Rote sequences play the role that Sturmian sequences play
for the factor complexity and reflection complexity.  The final section lists open problems.


\section{Preliminaries}\label{sec:preliminaries}

An \emph{alphabet} $\mathcal{A}$ is a finite set of symbols called \emph{letters}.
A \emph{word} over $\mathcal{A}$ is a finite string $w = w_1 w_2 \cdots w_n$ of letters
$w_i \in \mathcal{A}$; its \emph{length} is $|w| = n$. The \emph{empty word} is denoted
by $\varepsilon$, $|\varepsilon| = 0$. The set of all finite words over $\mathcal{A}$
equipped with concatenation forms the free monoid $\mathcal{A}^*$, and we put
$\mathcal{A}^+ = \mathcal{A}^* \setminus \{\varepsilon\}$. For $w \in \mathcal{A}^*$ and
$a \in \mathcal{A}$, the symbol $|w|_a$ denotes the number of occurrences of the letter $a$ in $w$.

A \emph{sequence} (or an infinite word) over $\mathcal{A}$ is an infinite string
$\uu = u_0 u_1 u_2 \cdots$ with $u_i \in \mathcal{A}$ for all $i \in \mathbb{N}$.
The set of all sequences over $\mathcal{A}$ is denoted by $\mathcal{A}^{\mathbb{N}}$.

A word $w$ is a \emph{factor} of a word or a sequence $v$ if $v = pws$ for some
word $p$ and some word (or sequence) $s$. If $p = \varepsilon$, then $w$ is a
\emph{prefix} of $v$; if $s = \varepsilon$, then $w$ is a \emph{suffix} of $v$.
An index $i \in \mathbb{N}$ is an \emph{occurrence} of a factor $w$ in $\uu$ if
$u_i u_{i+1} \cdots u_{i+|w|-1} = w$.

The \emph{language} $\Lang(\uu)$ of a sequence $\uu$ is the set of all its factors,
and $\Lang_n(\uu)$ denotes the set of factors of $\uu$ of length $n$.
The \emph{factor complexity} of $\uu$ is the mapping ${\mathcal C}: \mathbb{N} \to \mathbb{N}$
given by
\[
  {\mathcal C}(n) = \#\Lang_n(\uu),
\]
and we write $\Delta {\mathcal C}(n) = {\mathcal C}(n+1) - {\mathcal C}(n)$ for its first difference.
Since every factor of $\uu$ can be prolonged to the right, $\mathcal C$ is non-decreasing.

A sequence $\uu$ is \emph{eventually periodic} if $\uu = v z z z \cdots = v z^\omega$
for some $v \in \mathcal{A}^*$ and $z \in \mathcal{A}^+$; if moreover $v = \varepsilon$,
then $\uu$ is \emph{(purely) periodic}. A sequence which is not eventually periodic is called
\emph{aperiodic}.

A sequence $\uu$ is \emph{recurrent} if every factor of $\uu$ occurs in $\uu$
infinitely many times. It is \emph{uniformly recurrent} if for every factor
$w \in \Lang(\uu)$ there exists $N \in \mathbb{N}$ such that every factor of $\uu$ of length
$N$ contains $w$. Let $w$ be a factor of a recurrent sequence $\uu$ and let $i < j$ be two
consecutive occurrences of $w$ in $\uu$. Then the word $u_i u_{i+1} \cdots u_{j-1}$ is called a
\emph{return word} to $w$ in $\uu$. A~recurrent sequence $\uu$ is uniformly recurrent
if and only if every factor of $\uu$ has only finitely many return words.

\subsection{Special factors and complexity}

Let $\uu$ be a sequence over $\mathcal{A}$ and $w \in \Lang(\uu)$.
The sets of \emph{left}, \emph{right}, and \emph{bilateral extensions} of $w$ are
\[
\begin{aligned}
  \Lext(w) &= \{ aw : a \in \mathcal{A} \text{ and } aw \in \Lang(\uu) \},\\
  \Rext(w) &= \{ wb : b \in \mathcal{A} \text{ and } wb \in \Lang(\uu) \},\\
  \Bext(w) &= \{ awb : a, b \in \mathcal{A} \text{ and } awb \in \Lang(\uu) \}.
\end{aligned}
\]
If $w \notin \Lang(\uu)$, then $\Lext(w) = \Rext(w) = \Bext(w) = \emptyset$.

A factor $w$ is \emph{left special} if $\#\Lext(w) \geq 2$, \emph{right special}
if $\#\Rext(w) \geq 2$, and \emph{bispecial} if it is both left and right special.

Since every factor of $\uu$ has at least one right extension, we have 
\[
  \Delta C(n) = \sum_{w \in \Lang_n(\uu)} \bigl( \#\Rext(w) - 1 \bigr)\quad \text{for every $n \in \mathbb{N}$}.
\]
If $\uu$ is recurrent, then every factor has at least one left extension, and analogously,
 \[
  \Delta C(n) = \sum_{w \in \Lang_n(\uu)} \bigl( \#\Lext(w) - 1 \bigr)\quad \text{for every $n \in \mathbb{N}$}.
 \]

By the classical theorem of Morse and Hedlund~\cite{MorseHedlund1938}, a sequence $\uu$ is eventually
periodic if and only if $C(n+1) = C(n)$ for some $n \in \mathbb{N}$. Consequently,
every aperiodic sequence satisfies $C(n) \geq n + 1$ for all $n \in \mathbb{N}$.
Aperiodic sequences with $C(n) = n + 1$ for all $n \in \mathbb{N}$ are called
\emph{Sturmian sequences}; they are necessarily binary and uniformly recurrent.
A sequence $\uu$ is called \emph{quasi-Sturmian} if there exist constants $\kappa , \ n_0 \in \mathbb{N}$ such that $C(n) = n + \kappa$ for all $n \geq n_0$.

\subsection{Morphisms and antimorphisms}

A mapping $\varphi : \mathcal{A}^* \to \mathcal{B}^*$ is a \emph{morphism} if
$\varphi(vw) = \varphi(v)\varphi(w)$ for all $v, w \in \mathcal{A}^*$, and it is an
\emph{antimorphism} if $\varphi(vw) = \varphi(w)\varphi(v)$ for all $v, w \in \mathcal{A}^*$.
Both are uniquely determined by the images of letters. The action of a morphism $\varphi$
extends naturally to sequences by
$\varphi(u_0 u_1 u_2 \cdots) = \varphi(u_0)\varphi(u_1)\varphi(u_2) \cdots$.
If $\varphi(\uu) = \uu$, then $\uu$ is a \emph{fixed point} of $\varphi$.

The composition of two antimorphisms is a morphism, and the composition of a morphism and
an antimorphism is an antimorphism. Therefore, a group $G$ generated by antimorphisms
over $\mathcal{A}$ consists of morphisms and antimorphisms; as explained in
Remark~\ref{rem:permutations} below, each of them acts on $\mathcal{A}$ as a permutation of letters.
The most prominent antimorphism is the \emph{reversal} (or \emph{mirror image})
$R : \mathcal{A}^* \to \mathcal{A}^*$,
\[
  R(w_1 w_2 \cdots w_n) = w_n \cdots w_2 w_1 .
\]
Over the binary alphabet $\{0,1\}$, we write $\overline{0} = 1$ and $\overline{1} = 0$ and
denote by $\overline{w} = \overline{w_1}\,\overline{w_2} \cdots \overline{w_n}$ the
\emph{complement} (letter exchange) of $w = w_1 w_2 \cdots w_n$. The \emph{exchange antimorphism}
$E : \{0,1\}^* \to \{0,1\}^*$ is the antimorphism determined by $E(0) = 1$ and $E(1) = 0$, i.e.,
\[
  E(w_1 w_2 \cdots w_n) = \overline{w_n} \cdots \overline{w_2}\,\overline{w_1} .
\]
Clearly, $R^2 = E^2 = I$, where $I$ is the identity, and the composition
$ER = RE$ is the morphism exchanging letters, i.e., $ER(w) = \overline{w}$.
Hence $E(w) = R(\overline{w}) = \overline{R(w)}$ for every $w \in \{0,1\}^*$.

Let $\vartheta$ be an antimorphism. A word $w$ is a \emph{$\vartheta$-palindrome} if
$w = \vartheta(w)$. The $R$-palindromes are the classical palindromes; the $E$-palindromes
are sometimes called \emph{antipalindromes}. Note that every non-empty $E$-palindrome has even length.

Let $G$ be a group generated by antimorphisms over $\mathcal{A}$. We say that the language
of a sequence $\uu$ is \emph{closed under $G$} if $g(w) \in \Lang(\uu)$ for every
$w \in \Lang(\uu)$ and every $g \in G$. If $G = \langle \vartheta \rangle$, we simply say that
the language is closed under $\vartheta$.
If the language of a sequence $\uu$ is closed under $G$, then $\uu$ is known to be recurrent.
It suffices to explain that every prefix $w$ of $\uu$ occurs twice. Consider an antimorphism $\vartheta \in G$. By closedness under $G$, $\vartheta(w)$ is a factor of $\uu$, then $\vartheta(w)a$ is a factor of $\uu$ for some letter $a$ and again by closedness under $G$
and since $G$ is a group, $\vartheta^{-1}(\vartheta(w)a)=\vartheta^{-1}(a)w$ is a factor of $\uu$. Hence, $w$ has also a~non-prefix occurrence in $\uu$.

\section{Generalized reflection complexity}\label{sec:generalized}

Let $G$ be a  group generated by antimorphisms over an alphabet $\mathcal{A}$.  Two words $u,v$ over $\mathcal A$ are $G$-{\textit{equivalent}} if $u=g(v)$ for some $g \in G$. We write $u\sim_G v$.

\begin{rem}\label{rem:permutations} In general, an antimorphism is a mapping $\psi:\mathcal A^*\mapsto \mathcal A^*$ satisfying $\psi(uv) = \psi(v)\psi(u)$ for any $u,v \in \mathcal A^*$, thus the images of letters under $\psi$ may be arbitrary words. But if the antimorphisms generate a group $G$, then each antimorphism must have an inverse in $G$; consequently, the letter images must be again letters. In other words, each such antimorphism corresponds to a~letter permutation. Clearly, any group $G$ generated by antimorphisms is finite.
\end{rem}

\begin{defi} Let $G$ be a group generated by antimorphisms over an alphabet $\mathcal{A}$ and let $\uu$ be a sequence over $\mathcal A$. The {\em{generalized reflection complexity with respect to $G$}} (or {\em{$G$-complexity}}, for short) is a mapping ${\mathcal C}_G: \mathbb N \to \mathbb  N$, where ${{\mathcal C}_G}(n)$ is the number of distinct factors of $\uu$ of length $n$, up to equivalence by $\sim_G$.
\end{defi}

Obviously, 
\begin{equation}\label{eq:obviousIneq}\tfrac{1}{\# G}\,{{\mathcal C}}(n)\ \leq \  {{\mathcal C}_G}(n)\ \leq \  {{\mathcal C}}(n)\quad \text{for every $n \in \N$}.\end{equation}

Denote by $ \langle\uu\rangle_n $  the set of equivalence classes of words in $\mathcal{L}_n(\uu)$, i.e., $t \in \langle\uu\rangle_n $
means that $ t = \{g(w): g \in G\}\cap \mathcal{L}_n(\uu)$ for some $w \in \mathcal{L}_n(\uu)$.

If $G$ is generated only by the reflection $R$, then  the  generalized reflection complexity coincides with the reflection complexity introduced by Allouche et al.~\cite{AlCaLiShSt2025}.

\bigskip

It follows directly from the definition that the factor complexity of any sequence is a non-decreasing function, i.e., ${\mathcal C}(n+1)\geq {\mathcal C}(n)$ for all $n\in \mathbb N$. 
This no longer holds for the reflection complexity, where $r(n+1)+1\geq r(n)$ for all $n\in \mathbb N$ and there are sequences such that $r(n)>r(n+1)$ for infinitely many lengths $n$~\cite{AlCaLiShSt2025}.
The generalized reflection complexity behaves like the reflection complexity.
\begin{proposition}  Let $G$ be a group generated by antimorphisms over an alphabet $\mathcal{A}$ and let $\uu$ be a sequence over $\mathcal A$. Then the generalized reflection complexity of $\uu$ satisfies
$$C_G(n+1)+1\geq C_G(n)\quad \text{for all $n\in\mathbb N$.}$$
\end{proposition}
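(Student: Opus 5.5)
We prove the inequality by mapping classes of length $n+1$ onto classes of length $n$ via prefixes, and showing that at most one class of length $n$ is missed. Define a map $\Phi:\langle\uu\rangle_{n+1}\to\langle\uu\rangle_n$ as follows: for a class $t\in\langle\uu\rangle_{n+1}$ pick any $w\in t$ and let $\Phi(t)$ be the class of the prefix of $w$ of length $n$.

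The first step is to check that $\Phi$ is well defined. If $w'=g(w)$ for some $g\in G$, then $g$ acts letterwise as a permutation (Remark~\ref{rem:permutations}). If $g$ is a morphism, then the prefix of $w'$ of length $n$ is $g$ applied to the prefix of $w$ of length $n$. If $g$ is an antimorphism, then the prefix of $w'$ of length $n$ is $g$ applied to the suffix of $w$ of length $n$. So well-definedness reduces to showing that the prefix and the suffix of length $n$ of a word $w$ are $G$-equivalent whenever $G$ contains an antimorphism. This is not true in general, so instead I would define $\Phi(t)$ as the set of classes of all length-$n$ prefixes and suffixes of all members of $t$. The claim then becomes: the union of these sets over $t\in\langle\uu\rangle_{n+1}$ covers $\langle\uu\rangle_n$ except possibly one class.

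The covering argument uses right extensions. Every factor $v\in\Lang_n(\uu)$ has a right extension $vb\in\Lang_{n+1}(\uu)$, so $v$ is a prefix of some factor of length $n+1$. Hence every class of length $n$ is hit by the prefix relation, and the task is purely counting: we want more classes above than below, up to an additive $1$.

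The key observation is that the sets $\Phi(t)$ are small. Any $g\in G$ sends prefixes of $w$ to prefixes (if $g$ is a morphism) or to suffixes (if $g$ is an antimorphism) of $g(w)$. Therefore the length-$n$ prefixes and suffixes of all members of $t$ lie in at most two $G$-classes: the class of the prefix $p(w)$ and the class of the suffix $s(w)$ of one fixed representative $w$. Build a graph $\Gamma$ whose vertices are the elements of $\langle\uu\rangle_n$ and, for each $t\in\langle\uu\rangle_{n+1}$, add an edge (or a loop) between $[p(w)]$ and $[s(w)]$. Then $\Gamma$ has $C_G(n+1)$ edges and $C_G(n)$ vertices, and every vertex is incident to an edge by the right-extension argument.

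I expect the main obstacle to be showing that $\Gamma$ is connected. Once that holds, a connected graph on $C_G(n)$ vertices has at least $C_G(n)-1$ edges, which gives $C_G(n+1)\ge C_G(n)-1$ and proves the proposition. For connectivity, take the prefix $u_0\cdots u_{n-1}$ of $\uu$. Consecutive factors $u_i\cdots u_{i+n-1}$ and $u_{i+1}\cdots u_{i+n}$ are the prefix and the suffix of the factor $u_i\cdots u_{i+n}$, so their classes are adjacent in $\Gamma$. Hence all classes of factors occurring in $\uu$ lie on one walk, and since every length-$n$ factor occurs somewhere in $\uu$, the walk visits every vertex. This argument does not even need recurrence. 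It amounts to a Rauzy-graph argument quotiented by $G$, and the only delicate point is the well-definedness of edges, handled above.
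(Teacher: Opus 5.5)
Your proof is correct and is essentially the paper's argument. The paper builds the same graph on $\langle\uu\rangle_n$, with one edge per class in $\langle\uu\rangle_{n+1}$ joining the classes of a length-$n$ prefix and suffix, asserts that it is connected, and concludes that the number of edges is at least the number of vertices minus one. Your check that the unordered pair $\{[p(w)],[s(w)]\}$ does not depend on the chosen representative, and your walk along consecutive factors of $\uu$ proving connectivity, spell out details the paper leaves implicit; the initial detour through the map $\Phi$ and right extensions is unnecessary.
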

\begin{proof}
For each length $n$, we construct a~non-oriented graph with the set of vertices equal to $ \langle\uu\rangle_n $ and the set of edges  $ \langle\uu\rangle_{n+1}$. A~class $z\in  \langle\uu\rangle_{n+1}$ forms an edge between the classes $s, t\in  \langle\uu\rangle_{n}$ if there exist $u\in t$, $v\in s$ and $w \in z$ such that $u$ is a prefix of $w$ and $v$ is a suffix of $w$ or vice-versa. It follows by the construction that the graph is connected.
Consequently, the number of edges is greater than or equal to the number of vertices reduced by one, in other words $C_G(n+1)\geq C_G(n)-1$.
\end{proof}

\begin{example} Let us consider the binary alphabet $\{0,1\}$ and the group $H$ generated by two antimorphisms: the reflection $R$ and the antimorphism 
$E$ induced by $E(0) = 1$ and $E(1)=0$. We will define a~sequence $\uu$ such that ${\mathcal C}_H(n+1)<{\mathcal C}_H(n)$ for some $n\in \mathbb N$.

Consider the sequence $\uu = \pi({\bf f})$, \ where ${\bf f}$ is the Fibonacci sequence, i.e., the fixed point of $\varphi: 0\mapsto 01,\ 1\mapsto 0$, and 

$$ \pi: \left\{\begin{array}{ll}
    0\mapsto 01;\\
    1\mapsto 10.
    \end{array}\right. $$
    The reader may easily verify that ${\mathcal C}_H(4)=4$ since the factors of $\uu$ of length four are: $$0110\sim_H 1001,\ 1100\sim_H 0011,\ 0010\sim_H 1011, \ 0101.$$
    While ${\mathcal C}_H(5)=3$ as the factors of $\uu$ of length five are: $$01100\sim_H 10011\sim_H 11001\sim_H 00110, \ 10010\sim_H 10110, \ 00101\sim_H 01011.$$  
    In Example~\ref{ex:TMimageFib}, we even show that for this sequence $\uu$ we have ${\mathcal C}_H(n+1)<{\mathcal C}_H(n)$ for every even $n$.
\end{example}
\begin{defi}  Let $G$ be a group generated by antimorphisms over an alphabet $\mathcal{A}$, let $\uu$ be a sequence over $\mathcal A$ and $n \in \mathbb{N}$. Let $t \in  \langle\uu\rangle_n $ and $w \in t$. By $\mathcal{T}(t)$ we denote the number of equivalence classes of the set 
$${\rm Bext}_G(w)=\bigcup_{g \in G}{\rm Bext}(g(w))\,.$$

\end{defi}
Knowing the values $\mathcal{T}(t)$  allows us to calculate the generalized reflection complexity of $\uu$ because 

\begin{equation}\label{eq:vypocetr(n+2)}
   {\mathcal C}_G(n) = \sum_{t \in  \langle\uu\rangle_n } 1 \qquad \text{and}\qquad {\mathcal C}_G(n+2) = \sum_{t \in  \langle\uu\rangle_n } \mathcal{T}(t). 
\end{equation}

\begin{rem}\label{rem:simpleObservations} If  $w$ is a prefix of $\uu$ and $g(w)$ does not have an occurrence  $j>0$ for any $g\in G$, then $\mathcal{T}(t)=0$. Otherwise, ${\rm Bext}_G(w) $ is not empty and hence $\mathcal{T}(t)\geq 1$.    
\end{rem} 

\begin{defi}\label{def:IndexRE} 
Let $G$ be a group generated by antimorphisms over an alphabet $\mathcal A$ and let $\uu$ be a sequence over $\mathcal A$. For each $n \in \mathbb{N}$, we denote by ${\rm Ind}(n)$ the minimum $i \in \N$ having the property: 
  there exists $j >i$ such that the factors of length $n+1$ occurring in $\uu$ at positions $i$ and $j$ are $G$-equivalent. 
\end{defi}

For each $n\in \mathbb N$, ${\rm Ind}(n)$ is well defined. 
If $\uu$ is a recurrent sequence, then ${\rm Ind}(n)=0$ for each $n\in \mathbb N$. If $\uu$ is not recurrent, then, since there are only finitely many factors of length $n+1$, there exists an index $i\in \mathbb N$ such that the factor of length $n+1$ occurring at $i$ has another occurrence $j >i$ in $\uu$. 
Obviously, if  ${\rm Ind}(n-1)>0$, then ${\rm Ind}(n)>0$, too. 

\begin{example} Let us consider the group $H=\langle E, R \rangle$ over the binary alphabet.
We will examine two non-recurrent sequences.

\noindent For the sequence $001010101010\cdots=0(01)^\omega$, we have ${\rm Ind}(n)=1$ for each $n\geq 1$.\\
\noindent For the sequence $11010010100100101\cdots=11\mathbf f$, where $\mathbf f$ is the Fibonacci sequence, we have ${\rm Ind}(n)=1$ for each $n\geq 5$ since the sequence $1\mathbf f$ is known to be recurrent and $g(110100)$ for any $g\in H$ is not its factor. It is readily seen that ${\rm Ind}(n)=0$ for $n\in\{0,1,2,3,4\}$ since for every prefix $w$ of length $n\leq 5$, the word $\overline w$ is a factor of $1\mathbf f$. 
\end{example}

\section{Generalized reflection complexity and periodicity}\label{sec:periodicity}

The aim of this section is to characterize eventually periodic sequences by generalized reflection complexity.

Let us recall a similar characterization of eventually periodic sequences by factor complexity. 

\begin{thm}[\cite{MorseHedlund1938}]\label{thm:MorseHedlund} A~sequence $\uu$ is eventually periodic if and only if there exists $n \in \N$ such that ${\mathcal C}(n+1) ={\mathcal C}(n)$.  
\end{thm}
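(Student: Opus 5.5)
We prove the two implications separately; the substantial one is that $\mathcal C(n+1)=\mathcal C(n)$ for some $n$ forces eventual periodicity.

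\emph{Eventually periodic implies equality.} Suppose $\uu=vz^\omega$ with $|v|=m$ and $|z|=p$. Then $u_{i+p}=u_i$ for all $i\ge m$. A factor of length $n$ is determined by its starting position $i$, and every position $i\ge m$ gives the same factor as some position in $\{m,\dots,m+p-1\}$. Hence $\mathcal C(n)\le m+p$ for every $n$, so $\mathcal C$ is bounded. Since $\mathcal C$ is non-decreasing and integer-valued, it cannot increase strictly forever. Therefore $\mathcal C(n+1)=\mathcal C(n)$ for some $n$.

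\emph{Equality implies eventually periodic.} Assume $\mathcal C(n+1)=\mathcal C(n)$. By the formula
\[
\Delta\mathcal C(n)=\sum_{w\in\Lang_n(\uu)}\bigl(\#\Rext(w)-1\bigr),
\]
in which every summand is non-negative, each factor of length $n$ has exactly one right extension. Define the successor map $\sigma:\Lang_n(\uu)\to\Lang_n(\uu)$ by sending $w=w_1\cdots w_n$ with unique right extension $wb$ to $w_2\cdots w_n b$. Let $x_i$ denote the factor of length $n$ at position $i$. Then $x_{i+1}=\sigma(x_i)$ for all $i\ge 0$, because $x_{i+1}$ is the length-$n$ suffix of the factor $x_i u_{i+n}$ of length $n+1$, and that factor is the only right extension of $x_i$.

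The set $\Lang_n(\uu)$ is finite, so by the pigeonhole principle there are indices $i<j$ with $x_i=x_j$. Put $p=j-i$. Since the sequence $(x_k)$ is generated by iterating the deterministic map $\sigma$, induction on $k$ gives $x_{k+p}=x_k$ for all $k\ge i$. Reading off first letters yields $u_{k+p}=u_k$ for all $k\ge i$, that is, $\uu=u_0\cdots u_{i-1}(u_i\cdots u_{j-1})^\omega$, so $\uu$ is eventually periodic.

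No step is a real obstacle. The one point needing care is the identity $x_{i+1}=\sigma(x_i)$: it holds because the right extension of $x_i$ occurring at position $i$ is the unique one, so the map $\sigma$ faithfully follows the sequence. This is what turns the pigeonhole repetition into actual periodicity.
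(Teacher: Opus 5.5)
The paper gives no proof of this theorem. It is quoted from Morse and Hedlund and used as a black box, for instance in Corollary~\ref{cor:periodicity} in the form ``$\mathcal C$ is bounded if and only if $\uu$ is eventually periodic''. So there is nothing in the paper to compare against. Your argument is the standard self-contained proof, and it is correct.

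\begin{itemize}
\item The bound $\mathcal C(n)\le m+p$, together with monotonicity, gives the easy direction.
\item Every summand in $\Delta\mathcal C(n)$ vanishing makes the length-$n$ window move by a deterministic map on a finite set. Pigeonhole then forces eventual periodicity.
\end{itemize}

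Your two halves together prove exactly the boundedness formulation that the paper relies on.

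The only thing to tidy is the degenerate case $n=0$; recall that $0\in\N$ in this paper. There each $x_k$ is the empty word, so ``reading off first letters'' gives nothing. You have two easy fixes:
\begin{itemize}
\item Handle it directly: $\mathcal C(1)=\mathcal C(0)=1$ means a single letter occurs, so $\uu$ is constant.
\item Read off the appended letter instead of the first letter. If $x_{k+p}=x_k$, their unique right extensions coincide, so $u_{k+p+n}=u_{k+n}$ for all $k\ge i$. This works uniformly for every $n$.
\end{itemize}
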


Let us also recall a result from~\cite{DvPe2026} on the characterization of eventually periodic sequences by reflection complexity. 
\begin{thm}[\cite{DvPe2026}, Theorem 11]\label{thm:r_periodicity}  A~sequence $\uu$ is eventually periodic if and only if there exists $n \in \N$ such that $r(n+2) = r(n)$.  
\end{thm}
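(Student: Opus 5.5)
We argue both directions, relying on the Morse--Hedlund theorem (Theorem~\ref{thm:MorseHedlund}) for factor complexity. The reflection complexity $r$ is $\mathcal C_G$ for $G=\langle R\rangle$. Every factor class has at most two elements, so
\[
r(n)=\tfrac12\bigl(\mathcal C(n)+P(n)\bigr),
\]
where $P(n)$ is the number of palindromes of length $n$ in $\Lang(\uu)$. (Caution: this identity needs the language to be closed under reversal. In general we count classes $\{w,R(w)\}\cap\Lang_n(\uu)$, so the formula must be replaced by a direct count.)

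For the easy direction, assume $\uu=vz^\omega$. For large $n$ every factor of length $n$ is determined by its occurrence phase in the periodic tail, apart from finitely many factors touching $v$; those disappear from the count only through finitely many lengths' worth of behaviour. More precisely, for $n\ge |v|+|z|$ a factor of length $n$ occurring at position $\ge |v|$ is a factor of $z^\omega$, and such factors are counted by $\mathcal C(n)=|z|'$ (the primitive period) for the tail. Factors starting inside $v$ extend uniquely to the right, so the map $w\mapsto$ (its extension by two letters) is a surjection $\Lang_n\to\Lang_{n+2}$ for $n$ large. Because of prefix uniqueness it is also injective, by eventual periodicity, once $n$ exceeds the preperiod plus the period. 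This bijection commutes with neither $R$ nor $G$ directly. Instead we use the symmetric extension: for $n$ large each class $t\in\langle\uu\rangle_n$ has $\mathcal T(t)=1$ exactly. The plan is to show that $\mathcal T(t)\le 1$ for large $n$ using $\mathcal C(n+2)=\mathcal C(n)$, and $\mathcal T(t)\ge1$ by Remark~\ref{rem:simpleObservations} except for classes containing the prefix. Then \eqref{eq:vypocetr(n+2)} gives $r(n+2)=r(n)$ up to a correction from the non-recurrent prefix; choosing $n$ where no prefix class vanishes handles this.

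For the converse, suppose $r(n+2)=r(n)$ and $\uu$ is aperiodic. Then $\mathcal C(m)\ge m+1$ and every length has a right special factor. By \eqref{eq:vypocetr(n+2)}, $\sum_t(\mathcal T(t)-1)=0$. The key step is to show that aperiodicity forces some class with $\mathcal T(t)\ge2$ while no class has $\mathcal T(t)=0$ unless compensated. Take a right special factor $w$ of length $n+1$ in the eventually recurrent part, and let $w'$ be its suffix of length $n$ preceded by a letter. A left-extended version $aw'b$, $aw'c$ with $b\ne c$ gives two bilateral extensions of the class of $w'$. These are inequivalent unless $R$ maps $aw'b$ to $aw'c$, which forces $w'$ to be a palindrome with $a=c$ and $b=a$, a contradiction with $b\ne c$. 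So $\mathcal T\ge2$ for that class. The remaining task, which is the main obstacle, is to show that the classes with $\mathcal T=0$ (only the class of the prefix, by Remark~\ref{rem:simpleObservations}) cannot cancel this surplus. The plan is to take the right special factor along the recurrent part, so that it is not the class of a nonreturning prefix, or to exhibit a second surplus when $\mathrm{Ind}(n)>0$. This contradiction yields that $\uu$ is eventually periodic.
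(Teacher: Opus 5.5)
\textbf{Both directions have a genuine gap.} In the easy direction, your claim that for large $n$ every class $t\in\langle\uu\rangle_n$ has $\mathcal T(t)=1$ is false for non-recurrent sequences, and the proposed patch is not available. Take $\uu=01^\omega$. For every $n\ge1$ the classes of length $n$ are $\{01^{n-1}\}$ and $\{1^n\}$. The first has $\mathcal T=0$, since neither $01^{n-1}$ nor its reversal reoccurs. For the second, $\Bext(1^n)=\{01^{n+1},1^{n+2}\}$ consists of two $R$-inequivalent words, so $\mathcal T=2$. Thus $r(n+2)=r(n)=2$ holds only because a deficit cancels a surplus, and there is no $n\ge1$ at which the prefix class survives. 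Note also that $\mathcal C(n+2)=\mathcal C(n)$ rules out right special factors but not left special ones such as $1^n$, so it cannot give $\mathcal T\le1$. The clean route is the paper's: $r(n+2)\ge r(n)$ for every sequence (Proposition~\ref{prop:Neklesa}); $r\le\mathcal C$ is bounded when $\uu$ is eventually periodic; and bounded non-decreasing integer sequences $r(2n)$, $r(2n+1)$ are eventually constant (Corollary~\ref{cor:periodicity}). In the example, the compensating surplus is exactly the class of the factor at position ${\rm Ind}(n)=1$, which is the mechanism of Lemma~\ref{lem:IndexNEEQ}.

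In the converse, your first step is correct. If $aw'b,aw'c\in\Lang(\uu)$ with $b\ne c$, these words are not $R$-equivalent, so the class of $w'$ has $\mathcal T\ge2$; this is the mirror of Lemma~\ref{lem:dvetridyPropP}. But what you call the main obstacle is the whole content of this direction. Choosing the special factor away from the prefix class does not help, because one surplus $+1$ and one vanishing class $-1$ still cancel and give only $r(n+2)\ge r(n)$. The missing idea is Lemma~\ref{lem:IndexNEEQ}. Let $i={\rm Ind}(n)>0$ and let $v$ be the length-$n$ factor at position $i$. The extension $xvd$ at position $i-1$ is $R$-inequivalent to every bilateral extension of a word of the class of $v$ occurring at a position $\ge i$; otherwise a word equivalent to $xv$ would occur at a position $\ge i$, contradicting the minimality of ${\rm Ind}(n)$. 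Such a later extension exists by the definition of ${\rm Ind}(n)$, so $\mathcal T(t_{\rm Ind})\ge2$ is a second surplus. You must also handle the case where $t_{\rm Ind}$ coincides with the class of your special factor. There you need $\mathcal T\ge3$, which requires both inequivalent extensions of the special factor to have equivalent occurrences at positions $\ge i$. This is why the special factor must be taken along ${\bf x}_R$ (or ${\bf x}_L$) from Proposition~\ref{pro:RSaLS}, with both extensions occurring infinitely often; a recurrent suffix of $\uu$ need not exist. With these pieces you recover Proposition~\ref{pro:neperiodicity} for $G=\langle R\rangle$, which has Property~$\textswab{P}$. For $n=0$ your own argument suffices, since the class of $\varepsilon$ cannot vanish.
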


Let us state an auxiliary lemma.
\begin{lemma}\label{lem:IndexNEEQ} Let $G$ be a group generated by antimorphisms over an alphabet $\mathcal A$ and let $\uu$ be a sequence over $\mathcal A$. Let $v \in \mathcal{A^*}$,  $i={\rm Ind}(|v|)>0$   and $g \in G$. Assume that for some letters $a,b,x,d \in \mathcal{A}$,   the factor  $xvd$ occurs in $\uu$ at the position $i-1$   and the factor $g(avb)$ occurs in $\uu$ at a position $\geq i$. Then $xvd \nsim_G avb$. 
In particular, $\mathcal T(t_{\rm Ind})\geq 2$, where $t_{\rm Ind}$ is the class containing $v$.
    \end{lemma}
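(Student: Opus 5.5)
The plan is to argue by contradiction using the minimality in the definition of ${\rm Ind}$. Put $n=|v|$ and $i={\rm Ind}(n)>0$. Suppose $xvd \sim_G avb$, say $xvd = h(avb)$ for some $h\in G$. Then $xvd = h g^{-1}\bigl(g(avb)\bigr)$, so $xvd \sim_G g(avb)$. The word $xvd$ occurs at position $i-1$, and $g(avb)$ occurs at some position $k\geq i$, so $k>i-1$. The key step is to pass from these length-$(n+2)$ factors to factors of length $n+1$, which is the length that ${\rm Ind}(n)$ refers to.

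The prefix $xv$ of $xvd$ occurs at position $i-1$. Write $f=hg^{-1}$, so $xvd = f(g(avb))$. If $f$ is a morphism, it maps the prefix of length $n+1$ of $g(avb)$ onto $xv$, and that prefix occurs at position $k$. If $f$ is an antimorphism, it maps the suffix of length $n+1$ of $g(avb)$ onto $xv$, and that suffix occurs at position $k+1$. In both cases $f$ preserves length, since by Remark~\ref{rem:permutations} it permutes letters. So the length-$(n+1)$ factor at position $i-1$ is $G$-equivalent to the length-$(n+1)$ factor at some position $j\geq k>i-1$. By Definition~\ref{def:IndexRE} this gives ${\rm Ind}(n)\leq i-1$, a contradiction. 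Hence $xvd\nsim_G avb$.

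For the consequence $\mathcal T(t_{\rm Ind})\geq 2$, we need two elements of ${\rm Bext}_G(v)$ lying in different classes. The first is $xvd$ itself, where $x=u_{i-1}$ and $d=u_{i+n}$; it lies in ${\rm Bext}(v)\subseteq{\rm Bext}_G(v)$. For the second, use the definition of ${\rm Ind}(n)=i$: the factor $w$ of length $n+1$ at position $i$ has a $G$-equivalent copy $w'=g'(w)$ at some position $j>i\geq 1$. Since $w=vd$, we can write $w'=g'(vd)$.

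The only step needing care is building a bilateral extension $avb$ with $g(avb)$ occurring at a position $\geq i$, using that $w'$ occurs at $j\geq 2$ and so can be extended on both sides. If $g'$ is a morphism, $w'=g'(v)g'(d)$. Extending it to the left gives the factor $c\,g'(v)\,g'(d)$ at position $j-1\geq i$, which equals $g'(avb)$ with $a=g'^{-1}(c)$ and $b=d$. If $g'$ is an antimorphism, $w'=g'(d)g'(v)$. Extending it to the right gives $g'(d)\,g'(v)\,c$ at position $j\geq i$, which equals $g'(avb)$ with $a=d$ and $b=g'^{-1}(c)$. In either case $avb\in{\rm Bext}(v)$, because $G$ applied to a factor occurrence yields $g'(avb)\in\Lang(\uu)$, so $avb\in{\rm Bext}_G(v)$ as $g'(avb)\in{\rm Bext}(g'(v))$. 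By the first part, $avb\nsim_G xvd$, and therefore $\mathcal T(t_{\rm Ind})\geq 2$.
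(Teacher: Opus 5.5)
Your argument is correct and is essentially the paper's: you get the contradiction with the minimality of ${\rm Ind}(n)$ by passing to the length-$(n+1)$ prefix or suffix, then extend the $G$-copy of $vd$ at position $j>i$ on the appropriate side (you compose $f=hg^{-1}$, whereas the paper splits on whether $h$ is a morphism and uses that both $g(av)$ and $g(vb)$ occur at positions $\geq i$). Two small slips should be fixed: in the antimorphism case $g'(d)\,g'(v)\,c=g'(avb)$ holds with $a=(g')^{-1}(c)$ and $b=d$, not with $a=d$, $b=(g')^{-1}(c)$; and $avb$ itself need not be a factor of $\uu$ --- what lies in ${\rm Bext}_G(v)$ is $g'(avb)\in{\rm Bext}(g'(v))$, which suffices because it lies in the class of $avb$, distinct from the class of $xvd$.
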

\begin{proof} Note that $g(av)$ and $g(vb)$ occur at a position $\geq i$. Definition of $i={\rm Ind}(|v|)$  ensures that no factor $G$-equivalent to $xv$ occurs at the position $\geq i$.  

Assume, contrary to what we want  to prove, that $xvd \sim_G avb$. It means that $xvd = h(avb)$ for some $h \in G$. 
We discuss two cases:
\begin{itemize}
    \item 
If  $h$ is a morphism, then $xv = h(av)$. Since $h(av)\sim_G g(av)$, we  
deduce that a factor $g(av) \sim_G  xv$ occurs at a position $\geq i$, a~contradiction.

\item If  $h$ is an antimorphism, then $xv = h(vb)$. Since $h(vb)\sim_G g(vb)$, we  
deduce that a factor $g(vb) \sim_G  xv$ occurs at a position $\geq i$, a~contradiction, too.
\end{itemize}
In order to prove that $\mathcal T(t_{\rm Ind})\geq 2$, where $t_{\rm Ind}$ is the class containing $v$, it remains to show that there exist $a,b\in \mathcal A$ and $g\in G$ such that the word $g(avb)$ occurs at a position $\geq i$. 
By definition of $i$, there exists $h\in G$ such that $h(vd)$ occurs at a position $>i$. Taking the left extension of $h(vd)$ if $h$ is a morphism, or the right extension if $h$ is an antimorphism, produces a~word of the form $h(xvd)$ for some $x\in\mathcal A$ occurring at a position $\geq i$, i.e., we may set $g=h$, $a=x$, $b=d$. 

\end{proof}

\begin{proposition}\label{prop:Neklesa}  For every sequence $\uu$ over $\mathcal{A}$ and every group $G$ generated by antimorphisms over $\mathcal{A}$,
$$ \mathcal{C}_G(n+2)\geq \mathcal{C}_G(n)
\quad \text{for every } n \in \N .$$    
\end{proposition}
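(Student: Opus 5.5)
Via \eqref{eq:vypocetr(n+2)}, $\mathcal C_G(n+2)-\mathcal C_G(n)=\sum_{t\in\langle\uu\rangle_n}(\mathcal T(t)-1)$, so the proposition reduces to showing that this sum is non-negative. The plan is to show that at most one class $t$ can have $\mathcal T(t)=0$, and that whenever such a class exists, another class has $\mathcal T\ge 2$ to compensate.

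\emph{Step 1 (classes with $\mathcal T(t)=0$).} By Remark~\ref{rem:simpleObservations}, $\mathcal T(t)=0$ forces $t$ to be the class of the prefix $w$ of $\uu$ of length $n$, and no $g(w)$ with $g\in G$ has an occurrence $j>0$. Every other class $t$ has a representative occurring at a position $j\ge 1$. Since $\uu$ is infinite to the right, that occurrence extends to a bilateral extension, so $\mathcal T(t)\ge 1$. In particular, at most one class, the one containing the prefix $w$, has $\mathcal T=0$. If no class has $\mathcal T=0$, the inequality is immediate.

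\emph{Step 2 (compensation).} Suppose the prefix class $t_0$ has $\mathcal T(t_0)=0$. Then no factor $G$-equivalent to $w$ occurs at a position $>0$. Because $|w|=n$, every factor of length $n+1$ occurring at position $0$ is the word $w u_n$, which contains $w$ as a prefix. Hence ${\rm Ind}(n-1)>0$, which gives $i={\rm Ind}(|v|)>0$ with $|v|=n-1$. This lets me apply Lemma~\ref{lem:IndexNEEQ}, but its ``in particular'' part is stated for classes of length $|v|=n-1$, whereas the compensating class should be of length $n$. I therefore plan to redo the lemma's argument one level up. Take $i={\rm Ind}(n)$, which is $>0$ because ${\rm Ind}(n-1)>0$ implies ${\rm Ind}(n)>0$, as remarked after Definition~\ref{def:IndexRE}. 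Let $v=u_i\cdots u_{i+n-1}$ be the length-$n$ factor at position $i$. By Lemma~\ref{lem:IndexNEEQ}, $\mathcal T(t_{\rm Ind})\ge 2$ for the class $t_{\rm Ind}$ containing $v$. Since $v$ occurs at $i>0$, $t_{\rm Ind}\neq t_0$.

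\emph{Step 3 (conclusion).} The sum then has at most one term equal to $-1$, coming from $t_0$. It is offset by the term $\mathcal T(t_{\rm Ind})-1\ge 1$ from the distinct class $t_{\rm Ind}$, and all remaining terms are $\ge 0$. Thus $\mathcal C_G(n+2)\ge\mathcal C_G(n)$.

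The main obstacle is Step 2, specifically the index bookkeeping. I must check that Lemma~\ref{lem:IndexNEEQ} applies with $v$ of length $n$ and $i={\rm Ind}(n)$, which requires only ${\rm Ind}(n)>0$. I must also verify that ${\rm Ind}(n)>0$ really follows from $\mathcal T(t_0)=0$. For this I would argue directly: if ${\rm Ind}(n)=0$, the length-$(n+1)$ prefix $wu_n$ has a $G$-equivalent copy $g(wu_n)$ at some position $j>0$. Its length-$n$ part is $g(w)$ when $g$ is a morphism, or sits at position $j+1$ when $g$ is an antimorphism, and in either case it is a copy of $g(w)$ at a positive position. That contradicts $\mathcal T(t_0)=0$.
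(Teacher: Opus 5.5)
Your proof is correct and is essentially the paper's argument. The paper splits on ${\rm Ind}(n)=0$, in which case every $\mathcal T(t)\ge 1$, versus ${\rm Ind}(n)>0$, in which case Lemma~\ref{lem:IndexNEEQ} applied to the length-$n$ factor at position ${\rm Ind}(n)$ gives $\mathcal T(t_{\rm Ind})\ge 2$, offsetting the at most one class with $\mathcal T=0$; this is the contrapositive arrangement of your Steps~1--3. Your detour through ${\rm Ind}(n-1)$ is unnecessary, because the lemma holds for any $v$ with ${\rm Ind}(|v|)>0$, and your final direct check that $\mathcal T(t_0)=0$ forces ${\rm Ind}(n)>0$ already suffices.
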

\begin{proof} 

Consider an arbitrary $n\in\mathbb N$. If ${\rm Ind}(n)=0$, then by Remark~\ref{rem:simpleObservations}, $\mathcal{T}(t)\geq 1$ for each equivalence class $t\in \langle \uu\rangle_n$. Then the formula \eqref{eq:vypocetr(n+2)} implies the statement.
If $i={\rm Ind}(n)>0$, denote by $v$ the factor of $\uu$ of length $n$ occurring at the position $i$. By Lemma~\ref{lem:IndexNEEQ}, $\mathcal T(t_{\rm Ind})\geq 2$, where $t_{\rm Ind}$ is the class containing $v$. 
By Remark \ref{rem:simpleObservations}, $\mathcal{T}(t)=0$ for at most one G-equivalence class and $\mathcal{T}(t)\geq 1$ for all other classes in $\langle \uu\rangle_n$. Consequently, the formula \eqref{eq:vypocetr(n+2)} again implies the statement.

\end{proof}
\begin{corollary}\label{cor:periodicity} Let $G$ be a group generated by antimorphisms over $\mathcal{A}$. A~sequence $\uu$ over $\mathcal{A}$ is eventually periodic if and only if both sequences $\bigl( \mathcal{C}_G(2n)\bigr)_{n\in \N}$  and $\bigl( \mathcal{C}_G(2n+1)\bigr)_{n\in \N}$ are eventually constant.

In other words, $\uu$ is eventually periodic if and only if 
 $\mathcal{C}_G(n+2) = \mathcal{C}_G(n)$ for every sufficiently large $n \in \N$.   
 \end{corollary}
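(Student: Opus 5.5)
We prove the two implications separately, comparing $\mathcal C_G$ with the factor complexity $\mathcal C$ through the sandwich \eqref{eq:obviousIneq} and using Proposition~\ref{prop:Neklesa} for monotonicity along each parity class.

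\emph{Eventually periodic $\Rightarrow$ eventually constant.} Let $\uu=vz^\omega$. Then $\mathcal C(n)$ is bounded: every factor of length $n$ either starts inside $v$ or is a factor of $z^\omega$, so $\mathcal C(n)\le |v|+|z|$. By \eqref{eq:obviousIneq}, $\mathcal C_G(n)\le \mathcal C(n)$, so $\mathcal C_G$ is bounded as well. Proposition~\ref{prop:Neklesa} says that $(\mathcal C_G(2n))_n$ and $(\mathcal C_G(2n+1))_n$ are non-decreasing sequences of integers. A bounded non-decreasing integer sequence is eventually constant, so both sequences are eventually constant. Equivalently, $\mathcal C_G(n+2)=\mathcal C_G(n)$ for all sufficiently large $n$.

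\emph{Eventually constant $\Rightarrow$ eventually periodic.} Suppose both parity subsequences are eventually constant. Then $\mathcal C_G$ is bounded by some constant $K$. By the left inequality in \eqref{eq:obviousIneq}, $\mathcal C(n)\le \#G\cdot \mathcal C_G(n)\le \#G\cdot K$ for all $n$, and $\#G$ is finite by Remark~\ref{rem:permutations}. An aperiodic sequence satisfies $\mathcal C(n)\ge n+1$ by Theorem~\ref{thm:MorseHedlund}, so $\mathcal C$ would be unbounded. Hence $\uu$ is eventually periodic.

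The "in other words" reformulation is immediate: $\mathcal C_G(n+2)=\mathcal C_G(n)$ for all $n\ge n_0$ holds exactly when both parity subsequences are constant from index about $n_0/2$ onward.

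There is no real obstacle here. The only points to check are that the lower bound $\tfrac1{\#G}\mathcal C(n)\le\mathcal C_G(n)$ really uses finiteness of $G$ (each class has at most $\#G$ elements), and that Proposition~\ref{prop:Neklesa} gives monotonicity with step $2$ rather than step $1$. That is why the statement is phrased separately for even and odd lengths.
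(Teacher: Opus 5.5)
Your proof is correct and follows essentially the same route as the paper. In both, the bounds \eqref{eq:obviousIneq} together with finiteness of $G$ transfer boundedness between $\mathcal C_G$ and $\mathcal C$, Proposition~\ref{prop:Neklesa} gives monotonicity along each parity class, and Theorem~\ref{thm:MorseHedlund} links boundedness of $\mathcal C$ to eventual periodicity; the only cosmetic difference is that you check $\mathcal C(n)\le |v|+|z|$ for $\uu=vz^\omega$ directly instead of citing Morse--Hedlund for that direction.
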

\begin{proof} The previous proposition says that the two  mentioned sequences are non-decreasing.   By \eqref{eq:obviousIneq}, the sequences are bounded if and only if the factor complexity $\mathcal{C}$ of $\uu$ is bounded as well. By Theorem~\ref{thm:MorseHedlund}, $\mathcal C$ is bounded if and only if $\mathcal C$ is eventually periodic.  Each  bounded integer non-decreasing sequence is eventually constant.   
    \end{proof}    

In the sequel, we want to find an analogue of Theorem~\ref{thm:r_periodicity}, i.e., we want to determine assumptions such that the equality $\mathcal{C}_G(n+2) = \mathcal{C}_G(n)$ for one single $n$ guarantees periodicity.

We will make use of a ``folklore'' statement in combinatorics on words, proven in~\cite{DvPe2026}. 

 \begin{proposition}[\cite{DvPe2026}, Proposition 10]\label{pro:RSaLS} Let $\uu$ be an aperiodic sequence. 
 
 \begin{enumerate}
 
 \item
 There exists a right-sided sequence ${\bf x}_L = x_0x_1x_2 \cdots$  and two distinct letters $a,b \in \mathcal{A}$ such that every prefix $w$  of ${\bf x}_L$ is a left special factor of $\uu$ and both $aw$ and $bw$ occur infinitely many times in $\uu$.     
 
 \item 
 There exists a left-sided sequence ${\bf x}_R = \ldots y_2y_1y_0 $  and two distinct letters $c,d \in \mathcal{A}$ such that every suffix $u$  of   ${\bf x}_R$ is a right special factor of $\uu$ and both $uc$ and $ud$ occur infinitely many times in $\uu$.   
\end{enumerate}
 
 \end{proposition}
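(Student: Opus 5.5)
The plan is to pass from $\uu$ to the set of its factors that occur infinitely often, show that this set still "looks aperiodic", and then extract the infinite branch by a compactness (König) argument. Part~2 is the mirror image of part~1, obtained by the same argument with left and right exchanged, so we concentrate on part~1.

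Let $\Lang'$ denote the set of factors of $\uu$ occurring infinitely many times in $\uu$, and $\Lang'_n=\Lang'\cap\mathcal A^n$, $\mathcal C'(n)=\#\Lang'_n$.

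\emph{Extendability.} $\Lang'$ is closed under taking factors. Moreover, every $w\in\Lang'$ extends in $\Lang'$ on both sides. Indeed, $w$ has infinitely many occurrences at positions $\geq 1$, so by the pigeonhole principle some letter $a$ precedes infinitely many of them; hence $aw\in\Lang'$. The same argument works on the right. Consequently
\[
\mathcal C'(n+1)-\mathcal C'(n)=\sum_{w\in\Lang'_n}\bigl(\#\{a\in\mathcal A: aw\in\Lang'\}-1\bigr).
\]

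\emph{Strict growth of $\mathcal C'$.} We claim that $\mathcal C'(n+1)>\mathcal C'(n)$ for all $n$. Suppose instead that $\mathcal C'(n+1)=\mathcal C'(n)$ for some $n$. Since every word of $\Lang'_n$ has at least one right extension in $\Lang'$, equality forces each $w\in\Lang'_n$ to have exactly one right extension in $\Lang'$. Only finitely many factors of length $n+1$ occur finitely often, so there is $N$ such that every factor of length $n+1$ occurring at a position $\geq N$ lies in $\Lang'$. From position $N$ on, each letter of $\uu$ is therefore determined by the preceding $n$ letters through a function on the finite set $\Lang'_n$. Iterating this function must eventually cycle, so $\uu$ is eventually periodic, a contradiction. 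Hence, by the displayed formula, for every $n$ there is $w\in\Lang'_n$ with at least two distinct letters $a\neq b$ such that $aw,bw\in\Lang'$.

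\emph{König's lemma.} Let $T$ be the set of words $w$ admitting two distinct letters $a,b$ with $aw,bw\in\Lang'$. The set $T$ is closed under prefixes, because a prefix $p$ of $w$ satisfies $ap,bp\in\Lang'$ whenever $aw,bw\in\Lang'$. By the previous step, $T$ contains words of every length, and its prefix tree is finitely branching. By König's lemma there is an infinite sequence $\mathbf x_L$ all of whose prefixes lie in $T$. For each prefix choose a pair $\{a,b\}$ witnessing membership in $T$; as there are finitely many pairs, some pair $\{a,b\}$ is chosen for infinitely many prefixes. By prefix-closedness, that pair then works for every prefix of $\mathbf x_L$. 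Thus each prefix $w$ satisfies $aw,bw\in\Lang'$, i.e., $w$ is left special and both $aw$ and $bw$ occur infinitely often in $\uu$, which is part~1.

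The main obstacle is the strict growth step. One has to be careful that the Morse--Hedlund-type argument is applied to $\Lang'$ rather than to $\Lang(\uu)$: the failure of $\Lang(\uu)$ to be two-sided extendable is exactly what the restriction to infinitely occurring factors repairs. The key point is that beyond position $N$ all factors of length $n+1$ belong to $\Lang'$.
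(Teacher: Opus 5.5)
Your proof is correct and complete. The paper gives no proof of this proposition of its own; it quotes it as a ``folklore'' statement from \cite{DvPe2026}, so there is no in-paper argument to compare yours against. Your argument is a self-contained proof along the natural lines:
\begin{itemize}
\item passing to the set $\Lang'$ of factors occurring infinitely often restores two-sided extendability;
\item the Morse--Hedlund argument run on $\Lang'$ yields $\mathcal C'(n+1)>\mathcal C'(n)$, using that beyond some position $N$ every factor of length $n+1$ lies in $\Lang'$;
\item K\"onig's lemma, together with pigeonhole on the finitely many pairs $\{a,b\}$, produces $\mathbf x_L$ with one pair serving all prefixes.
\end{itemize}
Working in $\Lang'$ is exactly what delivers the ``infinitely many occurrences'' clause. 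The paper genuinely needs that clause later: case~3 of Proposition~\ref{pro:neperiodicity} requires $awc$ and $bwc$ to occur beyond position ${\rm Ind}(n)$.

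One small point on part~2: do not read ``mirror image'' as mirroring the strict-growth step. For a one-sided sequence, ``each letter is determined by the following $n$ letters'' needs an extra argument before it yields eventual periodicity. You do not need that argument, because strict growth of $\mathcal C'$ does not depend on direction. The right-extension version of your counting formula then immediately gives, for each $n$, a word of $\Lang'_n$ with two right extensions in $\Lang'$. K\"onig's lemma applied to the suffix-closed set of such words finishes exactly as in part~1.
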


\begin{defi} We say that a group $G$  generated by antimorphisms over $\mathcal{A}$ has  Property $\textswab{P}$ if  one of the following assumptions is met: 
\begin{enumerate}
\item $\#\mathcal{A} = 2$; 
\item  for every $g \in G$, the restriction of $g$ to $\mathcal{A}$ is either the identity, or it is a permutation without a fixed point and all its cycles are of odd length.
\end{enumerate}
    \end{defi}

\begin{lemma}\label{lem:dvetridyPropP} Let the group $G$ generated by antimorphisms over $\mathcal{A}$ have Property $\textswab{P}$ and $a,b \in \mathcal{A}$, $a\neq b$. Then    $awc\nsim_G bwc$  for every non-empty word $w \in \mathcal{A}^*$ and every $c\in\mathcal A$.  
\end{lemma}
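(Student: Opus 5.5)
Suppose, for contradiction, that $awc = g(bwc)$ for some $g\in G$. I split according to whether $g$ is a morphism or an antimorphism. By Remark~\ref{rem:permutations}, $g$ acts letterwise as a permutation $\sigma$ of $\mathcal A$. A morphism keeps the order of letters; an antimorphism reverses it. Write $n=|w|\geq 1$ and $v=awc$, a word of length $n+2\geq 3$.

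\emph{Case 1: $g$ is a morphism.} Then $awc=\sigma(b)\sigma(w)\sigma(c)$, so $\sigma(b)=a\neq b$, which means $\sigma$ is not the identity. Comparing the remaining positions gives $\sigma(c)=c$, so $\sigma$ has a fixed point. If $\#\mathcal A=2$, a non-identity permutation is the transposition, which has no fixed point, a contradiction. Under the second alternative of Property~$\textswab{P}$, a non-identity $\sigma$ has no fixed point either, again a contradiction. Note that this case needs only the letter $c$, not $w$.

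\emph{Case 2: $g$ is an antimorphism.} Now $awc = \sigma(c)\,\sigma(R(w))\,\sigma(b)$. Comparing the first and last letters gives $a=\sigma(c)$ and $c=\sigma(b)$. Comparing the middle gives $w=\sigma(R(w))$, that is, $w_i=\sigma(w_{n+1-i})$ for all $i$.

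If $\sigma$ is the identity, then $c=b$ and $a=c$, so $a=b$, a contradiction.

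Otherwise $\sigma$ has no fixed point. In the binary case this is automatic; under the second alternative it is part of Property~$\textswab{P}$. If $n$ is odd, the middle letter satisfies $w_m=\sigma(w_m)$, which contradicts having no fixed point. If $n$ is even, I use the structure of $v$ instead. Applying $g$ to the equality $awc=g(bwc)$ is awkward, so I work with the letters directly. We have $c=\sigma(b)$ and $a=\sigma(c)=\sigma^2(b)$, so $b, \sigma(b), \sigma^2(b)$ equal $b, c, a$. These give no contradiction by themselves when $\sigma$ has a $3$-cycle.

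Therefore the main obstacle is exactly the situation where $g$ is an antimorphism, $\sigma$ is fixed-point-free with odd cycles, and $|w|$ is even. For the binary case it is easy: $\sigma^2=\mathrm{id}$ gives $a=b$, a contradiction. In general I would try exploiting the odd cycle length. I consider $g^2$, which is a morphism acting by $\sigma^2$. Since all cycles of $\sigma$ have odd length, $\sigma^2$ has the same cycle structure and is non-identity, hence fixed-point-free by Property~$\textswab{P}$ applied to $g^2\in G$. The hope is to derive a fixed point of some group element from $w=\sigma R(w)$ combined with $c=\sigma(b)$, $a=\sigma(c)$. It is not clear to me that such an element exists, since a $3$-cycle $b\mapsto c\mapsto a\mapsto b$ and $w=xy$ with $x=\sigma(y)$, $y=\sigma(x)$ forces $\sigma^2(x)=x$, so $\sigma^2$ fixes $x$. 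Because $\sigma^2\neq\mathrm{id}$ (it sends $b\mapsto a\neq b$), that is a contradiction. More generally, $w=\sigma R(w)$ gives $w_i=\sigma^2(w_i)$ for every $i$, so the non-identity element $g^2$ has a fixed point. This is the key step, and it closes Case 2 whenever $n\geq1$.
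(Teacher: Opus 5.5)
Your proof is correct. The split into morphism and antimorphism cases, and the morphism case itself, are as in the paper; the antimorphism case is closed differently.

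The paper uses only the outer letters of $w$. From $h(w)=w$ it gets $h(w_1)=w_n$ and $h(w_n)=w_1$, so $h$ itself has a fixed point or a $2$-cycle, which the second alternative of Property~$\textswab{P}$ excludes; the binary case is dismissed separately. You instead obtain $\sigma^2(w_i)=w_i$ and use the boundary letters a second time, $\sigma^2(b)=\sigma(c)=a\neq b$, to show that the morphism $g^2\in G$ acts non-trivially on letters. Property~$\textswab{P}$ applied to $g^2$ then gives the contradiction.

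That step alone settles the whole antimorphism case for every $|w|\geq 1$, including $\sigma=\mathrm{id}$ and the binary alphabet. In a clean write-up you should drop the parity split, the $3$-cycle discussion and the tentative remarks, and keep only those two lines.

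What your route buys is a weaker hypothesis. You never use that cycles have odd length, only that every element of $G$ acts on $\mathcal A$ either as the identity or without fixed points. Both alternatives of Property~$\textswab{P}$ imply this, but the converse fails: take the group generated by an antimorphism acting as a $4$-cycle on a four-letter alphabet. Property~$\textswab{P}$ enters the proof of Proposition~\ref{pro:neperiodicity} only through this lemma, so your argument extends that proposition to such groups. This bears on the paper's fourth open question. The paper's version, in exchange, is a direct check on the single element $h$, with no passage to $g^2$.
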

\begin{proof}
Assume the opposite situation, i.e., that there exists $h \in G$ such that 
$awc = h(bwc)$. 

\begin{itemize} \item If $h$ is a morphism, then $awc= h(b)h(w)h(c)$. In particular,   \begin{equation}\label{eq:vylouceniLS}h(c)= c\qquad \text{and} \qquad a=h(b)\neq b.\end{equation}
We can see that $h$ restricted to $\mathcal{A}$ has a fixed point $c$, but $h$ is not the identity.  Hence the second point of our assumptions is not met. A~permutation with this property is excluded on binary alphabet as well. 
 \item If $h$ is an antimorphism, then $awc= h(c)h(w)h(b)$. In particular, 
\begin{equation}\label{eq:vylouceniLSanti}h(c)= a\neq b\qquad \text{and} \qquad c=h(b).\end{equation}
This is not possible on binary alphabet. Hence $\#\mathcal{A}\geq 3$ and $h$ is not the identity, as $b\neq a$.   Moreover, $h(w)=w$. Let us write $w = w_1w_2\cdots w_n$.   If $n=1$, then  $h(w)=w$ implies that $w_1$ is a fixed point of the permutation, which is excluded by    $\textswab{P}$. If $n\geq 2$, then  $h(w_1) = w_n$ and $h(w_n) = w_1$, in other words, if $w_1\not =w_n$, then the permutation has a cycle of length $2$; if $w_1=w_n$, then the permutation has a fixed point $w_1$ - both are excluded by  $\textswab{P}$.

\end{itemize}

The assumption  $awc = h(bwc)$ for some $h \in G$ led to a contradiction, so we can conclude that $awc\nsim_G bwc$. 
\end{proof}

\begin{proposition}\label{pro:neperiodicity} Let the group $G$ generated by antimorphisms over $\mathcal{A}$ have Property $\textswab{P}$. Then 
the generalized reflection complexity of an aperiodic sequence  $\uu$ over $\mathcal{A}$ satisfies  
\begin{equation}\label{eq:prirustek}\mathcal{C}_G(n+2)\geq \mathcal{C}_G(n)+1 \ \ \ \text{for every $n \in \N$, $n \geq 1$.}\end{equation}   
\end{proposition}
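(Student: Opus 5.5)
We use formula \eqref{eq:vypocetr(n+2)}, $\mathcal C_G(n+2)=\sum_{t\in\langle\uu\rangle_n}\mathcal T(t)$, as in the proof of Proposition~\ref{prop:Neklesa}. It suffices to find a class $t^*\in\langle\uu\rangle_n$ with $\mathcal T(t^*)\geq 2$ such that the remaining classes still contribute at least $\mathcal C_G(n)-1$. The new ingredient is Proposition~\ref{pro:RSaLS}. Since $\uu$ is aperiodic, there is a left special factor $w$ of length $n$ (a prefix of ${\bf x}_L$) and distinct letters $a,b$ such that $aw$ and $bw$ both occur infinitely often. Extend each occurrence to the right: $aw$ occurs infinitely often, so some $awc$ occurs infinitely often, and likewise some $bwc'$ occurs infinitely often.

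We first show that these two extensions lie in different $G$-classes, so that $\mathcal T(t_w)\geq 2$ for the class $t_w$ of $w$. If $c=c'$, Lemma~\ref{lem:dvetridyPropP} (which needs $n\geq 1$) gives $awc\nsim_G bwc$ directly. If $c\neq c'$, we must exclude $awc=h(bwc')$. For a morphism $h$, the first letters force $h(b)=a\neq b$ and the last letters force $h(c')=c\neq c'$, so $h$ moves letters. For an antimorphism $h$ we get $h(c')=a$, $h(b)=c$ and $h(w)=w$. On the binary alphabet, $h(w)=w$ with $w$ nonempty already forces $h$ to fix letters if $h$ is a morphism. If $h$ is an antimorphism, we have either $h=R$ or $h=E$; for $h=E$, $E(w)=w$ together with the letter conditions must be contradicted, or at least handled. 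Under the second clause of Property~$\textswab{P}$ the argument from Lemma~\ref{lem:dvetridyPropP} kills the antimorphism case via $h(w)=w$. This is the delicate point. To avoid it, we would try to choose the extensions more cleverly: since $aw$ and $bw$ occur infinitely often, we could take the right special factor structure from part~2 of Proposition~\ref{pro:RSaLS} as well. The cleaner route is the following. If the set of words $awx$ occurring infinitely often together with the words $bwy$ occurring infinitely often contains two words $awc$ and $bwc$ with a common last letter, we are done by the lemma. Otherwise, we pick a different pair, for instance two extensions of the same left extension $a$ when $aw$ is right special. If needed, we replace $w$ by a factor $G$-equivalent to it whose extension sets make a common $c$ available. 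This is the step I expect to be the main obstacle, and I would check the binary case with $h=E$ explicitly by hand.

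Next comes the bookkeeping. If $\uu$ is recurrent, or more generally if ${\rm Ind}(n)=0$, then every class has $\mathcal T(t)\geq 1$ by Remark~\ref{rem:simpleObservations}, and $t_w$ contributes at least $2$, giving $\mathcal C_G(n+2)\geq\mathcal C_G(n)+1$. If $i={\rm Ind}(n)>0$, at most one class, namely the class of the prefix $v$ of $\uu$ of length $n$ (as in Remark~\ref{rem:simpleObservations}), can have $\mathcal T=0$. Lemma~\ref{lem:IndexNEEQ} gives $\mathcal T(t_{\rm Ind})\geq 2$ for the class of the factor at position $i$. Since ${\rm Ind}(n)>0$, the factor at position $i$ is not in the prefix class. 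We then need one more unit.

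If $t_w\neq t_{\rm Ind}$, both classes contribute $2$, which compensates the possible zero and yields $+1$. If $t_w=t_{\rm Ind}$, we aim to show $\mathcal T(t_{\rm Ind})\geq 3$. The extension $xvd$ at position $i-1$ is not equivalent to any extension occurring at positions $\geq i$, by Lemma~\ref{lem:IndexNEEQ}. The two inequivalent extensions $awc$ and $bwc'$ occur infinitely often, hence at positions $\geq i$. Together with $xvd$ this gives three classes.
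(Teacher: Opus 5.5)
\textbf{There is a gap in the first step.} Your bookkeeping (the cases ${\rm Ind}(n)=0$; ${\rm Ind}(n)>0$ with $t_w\neq t_{\rm Ind}$; and $t_w=t_{\rm Ind}$, with the third class supplied by $xvd$ via Lemma~\ref{lem:IndexNEEQ}) is the same as the paper's. But the step everything rests on is left open: you need two $G$-inequivalent extensions of $w$, each occurring infinitely often. The route you take toward it fails.

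You extend $aw$ and $bw$ to the right independently and get $awc$ and $bwc'$. When $c\neq c'$ these two words can really be $G$-equivalent:
\begin{itemize}
\item if $w=R(w)$, then $awb=R(bwa)$;
\item on the binary alphabet, if $w=E(w)$, then $awa=E(bwb)$.
\end{itemize}
Property~$\textswab{P}$ does not prevent this, not even its second clause. In Lemma~\ref{lem:dvetridyPropP}, an antimorphism $h$ acting as the identity on letters is excluded only because the common last letter forces $a=h(c)=c=h(b)=b$. With distinct last letters, $R$ (which clause~2 allows) realizes the equivalence whenever $w$ is a palindrome. So your remark that clause~2 ``kills the antimorphism case via $h(w)=w$'' is wrong.

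This already happens for Sturmian sequences. If $w$ is a bispecial prefix of ${\bf x}_L$, then $w$ is a palindrome and both $0w1$ and $1w0$ occur infinitely often, so your choice may land exactly on this $R$-equivalent pair. The remedies you list (choosing another pair, replacing $w$ by an equivalent factor) are not arguments.

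\textbf{The missing observation.} Proposition~\ref{pro:RSaLS} gives one pair of letters $a,b$ that works for \emph{every} prefix of ${\bf x}_L$. Take $wc$ to be the prefix of ${\bf x}_L$ of length $n+1$, with $w$ its prefix of length $n$. Then $awc$ and $bwc$ both occur infinitely often with the same last letter, and Lemma~\ref{lem:dvetridyPropP} gives $awc\nsim_G bwc$ at once. This is exactly how the paper's proof begins. With this choice the rest of your argument goes through, including the three-class count when $t_w=t_{\rm Ind}$.

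\textbf{A minor inaccuracy.} The factor at position ${\rm Ind}(n)$ can lie in the class of the prefix of length $n$. For example, for $00{\bf f}$ with $n=2$ both are $00$. What you actually need is only that $t_{\rm Ind}$ is not a class with $\mathcal T=0$, and that already follows from $\mathcal T(t_{\rm Ind})\geq 2$.
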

\begin{proof} 
Let $wc$ be a prefix of length $n+1$ of the sequence ${\bf x}_L$; by Proposition \ref{pro:RSaLS}, $awc$ and $bwc$ occur infinitely many times in $\uu$.

We will denote by $t_{\rm Ind}$ the $G$-equivalence class containing the factor $v$ of length $n$ occurring at the position  ${\rm Ind}(n)$. By $t_L$ we denote the $G$-equivalence class containing the prefix $w$ of ${\bf x}_L $ having the length $n$.  

 \begin{enumerate}
     \item  ${\rm Ind}(n)=0$ : then $\mathcal{T}(t)\geq 1$ for every $G$-equivalence class $t$ and, by Lemma~\ref{lem:dvetridyPropP}, the class $t_L$ satisfies $\mathcal{T}(t_L)\geq 2$. The formula \eqref{eq:vypocetr(n+2)} implies  $\mathcal{C}_G(n+2)\geq  \mathcal{C}_G(n)+1$. 

     \item  ${\rm Ind}(n) >0$  and  $v\nsim_G w$ : then  $t_L \neq t_{\rm Ind}$. 
Recall that the inequality $\mathcal{T}(t)\geq 1$  applies to every $G$-equivalence class with one exception.  
      By Lemmata~\ref{lem:dvetridyPropP} and  \ref{lem:IndexNEEQ},    $\mathcal{T}(t_L)\geq 2$ and  $\mathcal{T}(t_{\rm Ind})\geq 2$.  The formula \eqref{eq:vypocetr(n+2)} gives  $\mathcal{C}_G(n+2)\geq  \mathcal{C}_G(n)+1$. 
   \item  ${\rm Ind}(n) >0$  and  $v\sim_G w$ : then  $t_L = t_{\rm Ind}$.   
 Denote by $xvd$  the factor of length $n+2$ occurring at the position  ${\rm Ind}(n)-1$. By Proposition \ref{pro:RSaLS},  $awc$ and $bwc$ occur infinitely many times in $\uu$.  Since $v\sim_G w$, we have $w=h(v)$ for some $h\in G$ and $awc=h(a'vb')$ for some $a',b'\in \mathcal A$ and similarly for $bwc$. Consequently, Lemma \ref{lem:IndexNEEQ} says that $xvd
 \nsim_G awc$ and $xvd
 \nsim_G bwc$. Since $awc\nsim_G bwc$, the set ${\rm Bext}_G(v)$ contains at least $3$ elements which are mutually not $G$-equivalent. Hence $\mathcal{T}(t_L)\geq 3$, which again forces the inequality   $\mathcal{C}_G(n+2)\geq  \mathcal{C}_G(n)+1$.   
 \end{enumerate}
\end{proof}

\begin{rem}\label{rem:0} The inequality \eqref{eq:prirustek}  is deduced for $n \geq 1. $  Let us explain why the case $n=0$ is not considered. 

\begin{description}
    \item[Ternary case] Consider the ternary sequence $\uu = \varphi({\bf f}) $, where ${\bf f}$ is the Fibonacci sequence and $\varphi$ is the morphism given by $\varphi(0)=ab, \ \varphi(1) = c$, i.e., 
    $$\begin{array}{rcl}
    \mathbf f&=&010010100100101\cdots \\
    \uu&=&abcababcabcababcababcabc\cdots
    \end{array}$$
 Let $G$  be the group generated  by $R$ and the antimorphism $\theta$ which restricted to the ternary alphabet is the cyclic permutation $a \mapsto b\mapsto c \mapsto  a$. Obviously $\uu$ is aperiodic and $ab, ba, bc, ca $  is the list of  factors of $\uu$ of length $2$.  All these factors are mutually $G$-equivalent. Consequently, $\mathcal{C}_G(2)=1 = \mathcal{C}_G(0)$.   

 \item[Binary case] Here, the inequality \eqref{eq:prirustek} is fulfilled also for $n=0$ since any binary aperiodic sequence contains the factors $01, 10$ and at least one of the words $00$ or $11$. Hence, $\mathcal{C}_G(2)\geq 2$. 
\end{description}
  
\end{rem}

\section{Groups over binary alphabet}\label{sec:binary}
In the rest of the article, we will restrict our attention to the binary alphabet $\{0,1\}$.
There are three possible groups generated by antimorphisms in this case:

\begin{enumerate}
\item $\langle R\rangle$: The generalized reflection complexity coincides with the reflection complexity; the papers~\cite{AlCaLiShSt2025, DvPe2026} study this case.
\item  $\langle E\rangle$: Combining Corollary \ref{cor:periodicity}, Proposition \ref{pro:neperiodicity}, and the binary case of Remark~\ref{rem:0}, we can characterize eventually periodic binary sequences using the generalized reflection complexity for the group $G=\langle E\rangle$. 
\begin{coro} Let $G=\langle E\rangle$. A~binary sequence $\uu$ is eventually periodic if and only if there exists $n \in \N$ such that ${\mathcal C}_G(n+2) = {\mathcal C}_G(n)$. 
\end{coro}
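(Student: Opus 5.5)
The two implications are proved separately, and each follows from results already established.

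For the forward implication, suppose $\uu$ is eventually periodic. By Corollary~\ref{cor:periodicity}, applied to the group $G=\langle E\rangle$, we have $\mathcal C_G(n+2)=\mathcal C_G(n)$ for every sufficiently large $n$. In particular, such an $n$ exists.

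For the converse, I argue by contraposition: assume $\uu$ is aperiodic and show that $\mathcal C_G(n+2)\neq\mathcal C_G(n)$ for every $n\in\N$.

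\begin{itemize}
\item \emph{Property $\textswab{P}$.} The group $\langle E\rangle$ is generated by an antimorphism over the binary alphabet $\{0,1\}$, so it satisfies the first condition of Property~$\textswab{P}$.
\item \emph{The case $n\geq 1$.} Proposition~\ref{pro:neperiodicity} applies and gives $\mathcal C_G(n+2)\geq\mathcal C_G(n)+1$.
\item \emph{The case $n=0$.} Here $\mathcal C_G(0)=1$, since the only factor of length $0$ is the empty word. By the binary case of Remark~\ref{rem:0}, an aperiodic binary sequence contains $01$, $10$ and at least one of $00$, $11$.
\end{itemize}

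For $n=0$ we still need to check that these factors produce at least two $\langle E\rangle$-classes. Since $E(01)=01$ and $E(10)=10$, each of $01$ and $10$ forms its own class, so $\mathcal C_G(2)\ge 2$. (Indeed $\mathcal C_G(2)\geq 3$, because $00\sim 11$ lies in yet another class.) Thus $\mathcal C_G(2)\geq 2>1=\mathcal C_G(0)$. Hence $\mathcal C_G(n+2)>\mathcal C_G(n)$ for all $n\in\N$, which contradicts the existence of an $n$ with equality.

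I expect no real obstacle. The only point needing care is the length-$0$ case, which Proposition~\ref{pro:neperiodicity} excludes and which is settled by the explicit count above.
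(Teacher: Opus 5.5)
Your proof is correct and follows the paper's route exactly. The paper derives this corollary by combining Corollary~\ref{cor:periodicity} for the forward direction, Proposition~\ref{pro:neperiodicity} for $n\geq 1$ (Property~$\textswab{P}$ holds trivially on a binary alphabet), and the binary case of Remark~\ref{rem:0} for $n=0$. Your explicit check that $01$ and $10$ are fixed by $E$, and so lie in distinct $\langle E\rangle$-classes, fills in the class count that the paper leaves to that remark.
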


\begin{rem}\label{rem:nikdyekvality} Let us explain that if  $G=\langle E \rangle$, there is no binary sequence satisfying ${\mathcal C}_G(n+2) = {\mathcal C}_G(n)+1$ for all $n\in \mathbb N$. Clearly, ${\mathcal C}_G(1)=1$ since $0\sim_G 1$. 
All possible factors of length three are: $000\sim_G 111, \ 001\sim_G 011, \ 010 \sim_G 101, \ 100 \sim_G 110$.
The reader is invited to check that it is impossible to construct an aperiodic sequence whose factors of length three belong to at most two $G$-equivalence classes.
\end{rem}

\item $H=\langle E, R\rangle=\{I, E, R, ER\}$: Throughout the remaining part of the paper, we will study the $H$-complexity.

As in the second case, combining Corollary \ref{cor:periodicity}, Proposition \ref{pro:neperiodicity}, and the binary case of Remark~\ref{rem:0}, we can characterize eventually periodic binary sequences using the  $H$-complexity. 

\begin{thm}\label{thm:Conjectur27RE} A~binary sequence $\uu$ is eventually periodic if and only if there exists $n \in \N$ such that ${\mathcal C}_H(n+2) = {\mathcal C}_H(n)$. 
\end{thm}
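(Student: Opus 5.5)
The plan is to prove the two implications separately, combining Corollary~\ref{cor:periodicity}, Proposition~\ref{pro:neperiodicity} and the binary case of Remark~\ref{rem:0}, exactly as was done for $\langle E\rangle$.

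For the direction ``eventually periodic $\Rightarrow$ equality for some $n$'', I will invoke Corollary~\ref{cor:periodicity} with $G=H$. If $\uu$ is eventually periodic, then $\mathcal C_H(n+2)=\mathcal C_H(n)$ for every sufficiently large $n$, so in particular for some $n\in\N$. Alternatively, I can argue directly: the sequences $(\mathcal C_H(2n))$ and $(\mathcal C_H(2n+1))$ are non-decreasing by Proposition~\ref{prop:Neklesa}. They are bounded by $\mathcal C(n)$, which is bounded by Theorem~\ref{thm:MorseHedlund}. Hence both are eventually constant.

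For the converse I will prove the contrapositive: if $\uu$ is aperiodic, then $\mathcal C_H(n+2)\ge \mathcal C_H(n)+1$ for every $n\in\N$. Since $\#\mathcal A=2$, the group $H$ has Property~$\textswab{P}$ by the first clause of the definition, with nothing to check. Proposition~\ref{pro:neperiodicity} then gives the strict inequality for all $n\ge 1$.

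It remains to treat $n=0$, where $\mathcal C_H(0)=1$ because only the empty word occurs. As in Remark~\ref{rem:0}, an aperiodic binary sequence contains both letters infinitely often, so it contains both $01$ and $10$. It must also contain $00$ or $11$, since otherwise it would be eventually $(01)^\omega$. Now $01\sim_H 01$, $10\sim_H 10$ and $00\sim_H 11$ under $H=\{I,E,R,ER\}$. The class of $00$ differs from those of $01$ and $10$, so $\mathcal C_H(2)\ge 2>\mathcal C_H(0)$. Thus no $n$ gives equality, which completes the contrapositive.

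The only potential obstacle is this $n=0$ case, which Proposition~\ref{pro:neperiodicity} excludes. It is settled by the short factor count above. All the substantive work is already contained in the earlier results.
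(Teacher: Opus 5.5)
Your proof is correct and follows the paper's own argument: Corollary~\ref{cor:periodicity} gives the forward direction, Proposition~\ref{pro:neperiodicity} handles $n\ge 1$ (Property~$\textswab{P}$ holds automatically on a binary alphabet), and the binary case of Remark~\ref{rem:0} settles $n=0$. One cosmetic slip: $01\sim_H 10$ via $R$, so in fact $\mathcal C_H(2)=2$ exactly, but your conclusion only needs $00\nsim_H 01$ and is unaffected.
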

In contrast to Remark \ref{rem:nikdyekvality}, if  $G=\langle R \rangle$ or $G=H$, there are  binary sequences for which ${\mathcal C}_G(n+2) = {\mathcal C}_G(n)+1$ for all $n\in\mathbb N$. 

In the case $G=\langle R \rangle$,  examples of such sequences can be found 
in~\cite{AlCaLiShSt2025} and \cite{DvPe2026}. For  $G=H$,  examples  are provided in the sequel. 

\end{enumerate}

\section{Sequences with language closed under $H$}\label{sec:Hclosed}

Recall that the $R$-palindromic complexity $\mathcal{P}_R: \N\mapsto \N$, resp. the $E$-palindromic complexity $\mathcal{P}_E: \N\mapsto \N$ of a~sequence $\uu$ counts the $R$-palindromes, resp. $E$-palindromes of length $n$ occurring in $\uu$. Formally,

$$\mathcal{P}_R(n) = \#\{w \in \mathcal{L}_n(\uu)\ :\ w = R(w)\};$$

$$\mathcal{P}_E(n) = \#\{w \in \mathcal{L}_n(\uu)\ :\ w = E(w) \}.$$

These values allow us to derive the relationship between the factor complexity and the $H$-complexity in the case when the language is closed under $H$. 
Every class (besides the class containing the empty word) with respect to $\sim_H$  has either 4 or 2 elements: 
On the one hand, if $w$ is neither an $R$-palindrome, nor an $E$-palindrome, then $w, \overline{w}, E(w), R(w)$ are distinct elements of the class containing $w$.
On the other hand, $w$ cannot be both an $R$-palindrome and an $E$-palindrome. If $w$ is an $R$-palindrome, then $R(w)=w\not =\overline{w}=E(w)$, consequently the class  has 2 elements. Similarly if $w=E(w)$. The following lemma summarizes these considerations.

\begin{lemma}\label{lem:vztahCGC} Let $\uu$ be a~binary sequence having the language closed under $H$. Then for every $n\in \N, n\geq 1$,
\begin{equation}\label{eq:CPRPECG}
    \mathcal{C}_H(n) = \frac{1}{4}\, \Bigl(  \mathcal{C}(n)+ \mathcal{P}_R(n)+ \mathcal{P}_E(n)\Bigr)\,.
\end{equation}
\end{lemma}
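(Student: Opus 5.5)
The plan is a Burnside-type orbit count. Fix $n\geq 1$. Closedness of the language under $H$ means that $H=\{I,E,R,ER\}$ acts on the finite set $\mathcal{L}_n(\uu)$. Each $\sim_H$-class restricted to $\mathcal{L}_n(\uu)$ is then a full orbit $\{g(w): g\in H\}$, so $\mathcal{C}_H(n)$ is exactly the number of orbits of this action.

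We could quote the Burnside lemma directly:
\[
\mathcal{C}_H(n)=\frac{1}{4}\sum_{g\in H}\#\{w\in\mathcal{L}_n(\uu): g(w)=w\}.
\]
The four terms are computed as follows.
\begin{itemize}
\item The identity fixes all $\mathcal{C}(n)$ words.
\item $R$ fixes exactly the palindromes, which gives $\mathcal{P}_R(n)$.
\item $E$ fixes exactly the antipalindromes, which gives $\mathcal{P}_E(n)$.
\item $ER$ maps $w$ to $\overline{w}$. Since $n\geq 1$, $\overline{w}\neq w$, so $ER$ fixes nothing.
\end{itemize}
Summing yields \eqref{eq:CPRPECG}.

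Alternatively, and closer to the discussion preceding the lemma, we can argue directly from class sizes. Every orbit has size $4$ or $2$, and it has size $2$ exactly when it contains an $R$-palindrome or an $E$-palindrome. No word is both: $R(w)=w=E(w)$ would force $\overline{w}=w$, which is impossible for $n\geq 1$. If $w=R(w)$, the orbit is $\{w,\overline{w}\}$, and $\overline{w}$ is again a palindrome. Likewise, if $w=E(w)$, then $\overline{w}$ is again an $E$-palindrome. Hence each $2$-element class contains exactly two palindromes or exactly two antipalindromes, and each $4$-element class contains neither. Writing $k_4$ and $k_2$ for the numbers of classes of each size, we get $\mathcal{C}(n)=4k_4+2k_2$ and $\mathcal{P}_R(n)+\mathcal{P}_E(n)=2k_2$. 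Adding these gives $4(k_4+k_2)=4\,\mathcal{C}_H(n)$.

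The only step needing care is the $2$-element case: we must check that both members of a $2$-element class are palindromes of the same type. This holds because complementation commutes with $R$ and $E$. The other point to flag is that closedness under $H$ is what guarantees every class is a full orbit inside $\mathcal{L}_n(\uu)$.
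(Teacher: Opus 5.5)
Your proof is correct, and your second (class-size) argument is the one the paper uses. The paper gives no separate proof. It relies on the preceding remark that every $\sim_H$-class of a nonempty word has four elements $w,\overline{w},E(w),R(w)$, or two elements when $w$ is an $R$- or an $E$-palindrome (never both). You also spell out a counting step the paper leaves implicit: $\overline{w}$ is a palindrome of the same type as $w$, so each two-element class contributes exactly $2$ to $\mathcal{P}_R(n)+\mathcal{P}_E(n)$.

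Your primary Burnside argument is a genuinely different route. Instead of classifying class sizes, it reads the formula off fixed-point counts. The only facts specific to $H$ are that $R$ and $E$ fix exactly the palindromes and antipalindromes, and that $ER$ fixes no nonempty word. This makes the paper's check that $w,\overline{w},E(w),R(w)$ are pairwise distinct in the generic case unnecessary.

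Burnside also generalizes at no cost. For any group $G$ generated by antimorphisms under which $\mathcal{L}(\uu)$ is closed, the same computation gives $\mathcal{C}_G(n)=\frac{1}{\#G}\sum_{g\in G}\#\{w\in\mathcal{L}_n(\uu): g(w)=w\}$. This recovers, for instance, the formula $r(n)=\frac12\bigl(\mathcal{C}(n)+\mathcal{P}_R(n)\bigr)$ used in the paper's remark on Shallit's result. The class-size argument, in exchange, is more elementary and shows directly what the classes look like.
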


In the case of languages closed under $H$, there is a~close relation between the $R$-palindromic complexity, the $E$-palindromic complexity and the factor complexity.

\begin{proposition}[\cite{PeSt2013}]\label{prop:Grich} Let $\uu$ be a~binary sequence with language closed under $H$. Then for every $n \in \N$,
\begin{equation}\label{eq:GrichnessRE}\mathcal{P}_R(n)+ \mathcal{P}_R(n+1) + \mathcal{P}_E(n)+ \mathcal{P}_E(n+1)\ \leq \   4+\Delta \mathcal{C}(n),
\end{equation}      
where $\Delta \mathcal{C}(n) = \mathcal{C}(n+1) - \mathcal{C}(n)$ denotes the first difference of $\mathcal{C}$.

\end{proposition}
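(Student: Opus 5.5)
We need to prove Proposition \ref{prop:Grich}: for a binary sequence $\uu$ with language closed under $H$,
$$\mathcal{P}_R(n)+\mathcal{P}_R(n+1)+\mathcal{P}_E(n)+\mathcal{P}_E(n+1)\le 4+\Delta\mathcal C(n).$$
The plan is to adapt the standard graph-theoretic proof of the palindromic inequality of Bucci, De Luca, Glen and Zamboni (for one involutive antimorphism) to the pair $R,E$.

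The first step is to build the Rauzy graph $\Gamma_n$ of order $n$. Its vertices are $\Lang_n(\uu)$ and its edges are $\Lang_{n+1}(\uu)$, the edge $w$ going from its prefix to its suffix of length $n$. Since closedness under $H$ makes $\uu$ recurrent (as shown in the Preliminaries), $\Gamma_n$ is strongly connected. Hence its undirected version has cyclomatic number
$$\#E-\#V+1=\Delta\mathcal C(n)+1.$$

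Next we let $H$ act on $\Gamma_n$. The maps $R$ and $E$ induce graph automorphisms that reverse edge orientation, and $ER$ induces one that preserves it. In the quotient graph $\Gamma_n/H$, a vertex or edge fixed by $R$ or by $E$ is exactly an $R$- or $E$-palindrome of length $n$ or $n+1$. The key counting step is to show that the number of such fixed vertices plus fixed edges is at most (cyclomatic number) $+3$. For a single antimorphism the known argument bounds fixed elements by cyclomatic number $+1$: form the quotient of the undirected graph by the involution and compare Euler characteristics, using that a fixed edge with distinct endpoints is impossible unless it is a loop in the quotient. With two involutions we instead quotient by the Klein group $H$. A vertex or edge cannot be fixed by both $R$ and $E$, since that would force $w=\overline w$. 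So each $H$-orbit of a palindrome has size $2$, and every other orbit has size $4$ or $2$, the latter only when fixed by $ER$, which is impossible since $\overline w\ne w$.

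The counting then goes as follows. We have $\#V=4a+2p_V$ and $\#E=4b+2p_E$, where $p_V=\mathcal P_R(n)+\mathcal P_E(n)$ and $p_E=\mathcal P_R(n+1)+\mathcal P_E(n+1)$. The quotient graph is connected, so $b+p_E\ge a+p_V-1+(\text{number of palindromic edges})$, because each palindromic edge becomes a loop or pendant structure in the quotient and does not contribute to spanning connectivity. More precisely, palindromic edges whose endpoints are swapped by the involution become loops in the quotient. So we get $b\ge a+p_V-1$ and $\Delta\mathcal C(n)=\#E-\#V=4b+2p_E-4a-2p_V\ge 2p_V+2p_E-4$ after substitution.

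This lands at a factor-of-2 discrepancy, which is the main obstacle. The fix is to argue that each fixed edge contributes a full cycle rather than half of one. Concretely, a palindromic edge $w$ of length $n+1$ has its prefix and suffix swapped by the involution, and pairing this with connectivity of $\Gamma_n$ yields a closed walk. The first thing to try is the refinement: quotient only by $ER$, which gives a free action. The resulting quotient has cyclomatic number $\frac12(\Delta\mathcal C(n))+1$ when counted appropriately. The residual involution $R$ (equal to $E$ on this quotient) then has fixed elements counted by $p_V+p_E$, and the single-involution inequality gives
$$p_V+p_E\le \tfrac{\Delta\mathcal C(n)}{2}+2\cdot\text{correction}.$$
Verifying the exact constant, namely that the total comes out as $4+\Delta\mathcal C(n)$, is the step I expect to be delicate and to require care with how loops and orientation-reversal are counted in the quotient.
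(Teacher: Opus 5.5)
\textbf{Gap: the orbit count is wrong.} The paper gives no proof of this statement; it quotes it from the literature. Your framework is the right one: the Rauzy graph $\Gamma_n$, its quotient by $H$, palindromic edges becoming loops, and connectivity after deleting loops. As written, though, the proposal does not establish the inequality, because the count $\#V=4a+2p_V$, $\#E=4b+2p_E$ is incorrect. The ``factor-of-2 discrepancy'' it produces cannot be repaired by finding extra cycles.

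A size-$2$ orbit has the form $\{w,\overline w\}$ with $w=R(w)$ or $w=E(w)$. Since $R(\overline w)=\overline{R(w)}$ and $E(\overline w)=\overline{E(w)}$, \emph{both} elements of the orbit are palindromes of the same kind. Hence there are only $p_V/2$ palindromic vertex orbits, so $\#V=4a+p_V$ and likewise $\#E=4b+p_E$. Already for $n=1$, the single orbit $\{0,1\}$ gives $\#V=2$, not $4$.

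Your intermediate inequality $p_V+p_E\le\tfrac12\Delta\mathcal C(n)+2$ is in fact false. For the Thue--Morse sequence, or any CS Rote sequence, at $n=1$ one has $p_V+p_E=2+0+2+2=6$, while $\Delta\mathcal C(1)=2$. Your proposed remedy via the $ER$-quotient repeats the same double count: the residual involution fixes $(p_V+p_E)/2$ elements, not $p_V+p_E$. The constant is then left as an unspecified ``correction'', so the inequality is never proved.

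\textbf{How to close the argument.} With the correct count the proof finishes at once. For $n\ge1$ the quotient $\Gamma_n/H$ is well defined, since antimorphisms swap the prefix and suffix of length $n$, and it is connected. It has $a+p_V/2$ vertices and $b+p_E/2$ edges. The $p_E/2$ palindromic edge orbits are loops, because the suffix of length $n$ of an $R$-palindrome (resp.\ $E$-palindrome) is the $R$-image (resp.\ $E$-image) of its prefix. Deleting these loops keeps the graph connected, and all remaining edges lie among the $b$ size-$4$ orbits. So $b\ge a+p_V/2-1$, and therefore $\Delta\mathcal C(n)=4b+p_E-4a-p_V\ge p_V+p_E-4$, which is the claim.

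Your $ER$-route also works once corrected. $\Gamma_n/\langle ER\rangle$ has cyclomatic number $\tfrac12\Delta\mathcal C(n)+1$, and the single-involution bound gives $(p_V+p_E)/2\le\tfrac12\Delta\mathcal C(n)+2$, the same inequality.

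Finally, $n=0$ must be handled separately. There $\varepsilon$ is fixed by both $R$ and $E$, so your claim that no word is fixed by both fails. Directly, $1+2+1+0=4\le 4+\Delta\mathcal C(0)=5$.
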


\begin{defi}\label{def:Grich} Let the language of a~binary sequence $\uu$ be closed under the group $H$. If the equality in~\eqref{eq:GrichnessRE} holds for all $n\in \mathbb N, \ n\geq 1$, $\uu$ is called $H$-\em{rich}.
\end{defi}
For $H$-rich sequences, we deduce the following simpler formula for the $H$-complexity.
\begin{proposition}\label{pro:CGforGrich}
Let $\uu$ be a~binary sequence with language closed under $H$. If $\uu$ is $H$-rich, then for every $n \in \N, n \geq 1$, we have

$$\mathcal{C}_H(n+2) = \mathcal{C}_H(n) +  \tfrac12 \Delta \mathcal{C}(n+1). 
$$

\end{proposition}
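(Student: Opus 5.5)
The plan is to combine Lemma~\ref{lem:vztahCGC} with the equality case of Proposition~\ref{prop:Grich}, applied at two consecutive lengths. Since the language is closed under $H$, Lemma~\ref{lem:vztahCGC} gives, for every $n\geq 1$,
\[
4\bigl(\mathcal{C}_H(n+2)-\mathcal{C}_H(n)\bigr) = \bigl(\mathcal{C}(n+2)-\mathcal{C}(n)\bigr) + \bigl(\mathcal{P}_R(n+2)-\mathcal{P}_R(n)\bigr) + \bigl(\mathcal{P}_E(n+2)-\mathcal{P}_E(n)\bigr).
\]
The first bracket is $\Delta\mathcal{C}(n)+\Delta\mathcal{C}(n+1)$. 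It therefore suffices to express the palindromic differences through $\Delta\mathcal{C}$.

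Write $S(m)=\mathcal{P}_R(m)+\mathcal{P}_E(m)$. By $H$-richness, equality holds in \eqref{eq:GrichnessRE} for all $m\geq 1$, so
\[
S(m)+S(m+1) = 4+\Delta\mathcal{C}(m).
\]
Applying this at $m=n+1$ and at $m=n$ and subtracting, the term $S(n+1)$ cancels:
\[
S(n+2)-S(n) = \Delta\mathcal{C}(n+1)-\Delta\mathcal{C}(n).
\]

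Substituting this back, the $\Delta\mathcal{C}(n)$ terms cancel, and we get
\[
4\bigl(\mathcal{C}_H(n+2)-\mathcal{C}_H(n)\bigr) = 2\,\Delta\mathcal{C}(n+1),
\]
which is the claimed formula.

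The only points needing care are the ranges of validity. Lemma~\ref{lem:vztahCGC} is stated for $n\geq 1$, so it applies at both $n$ and $n+2$. Definition~\ref{def:Grich} requires equality for $m\geq 1$, and both $m=n$ and $m=n+1$ satisfy this when $n\geq 1$. So the identity holds for all $n\geq 1$ with no extra boundary work. I do not expect a real obstacle; the key step is the telescoping subtraction that eliminates $S(n+1)$.
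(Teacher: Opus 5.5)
Your proposal is correct and follows essentially the same argument as the paper. You apply Lemma~\ref{lem:vztahCGC} at $n$ and $n+2$, and subtract the $H$-richness equalities at lengths $n$ and $n+1$ so that the palindromic difference becomes $\Delta\mathcal{C}(n+1)-\Delta\mathcal{C}(n)$; your check that both ingredients only need $n\geq 1$ is also correct.
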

\begin{proof} 
By Lemma \ref{lem:vztahCGC}, for $n\geq 1$,
$$ \mathcal{C}_H(n+2) - \mathcal{C}_H(n) = 
 \tfrac14 \,\Bigl( \mathcal{C}(n+2) - \mathcal{C}(n) + \underbrace{\mathcal{P}_R(n+2)+ \mathcal{P}_E(n+2) - \mathcal{P}_R(n)- \mathcal{P}_E(n)}_{:=X}\Bigr).
$$
Note that  in  \eqref{eq:GrichnessRE} the equality holds for every $n\geq 1$. If we replace in the equality  the variable  $n$  by $n+1$ and subtract the original equality,  we deduce 
$$\begin{array}{rcl}
X&=&\mathcal{P}_R(n+2)- \mathcal{P}_R(n) + \mathcal{P}_E(n+2)- \mathcal{P}_E(n)\\
&=& \Delta \mathcal{C}(n+1) -  \Delta \mathcal{C}(n)\\
&=& \mathcal{C}(n+2)-2 \mathcal{C}(n+1)+ \mathcal{C}(n).
\end{array}
$$
Consequently,  $$\mathcal{C}_H(n+2) - \mathcal{C}_H(n) = 
 \tfrac14 \,\Bigl( \mathcal{C}(n+2) - \mathcal{C}(n)+ \mathcal{C}(n+2)-2 \mathcal{C}(n+1)+ \mathcal{C}(n)\Bigr)=\tfrac{1}{2}\Delta {\mathcal C}(n+1),$$
as desired. 

\end{proof}

\subsection{$H$-complexity of complementary symmetric Rote sequences}

Let us study the $H$-complexity of a particular class of binary sequences -- complementary symmetric Rote sequences. We will observe in the sequel that the minimum $H$-complexity for aperiodic recurrent sequences is attained exactly for this class.

\begin{defi}
A \emph{Rote sequence} is a sequence $\mathbf v$ whose factor complexity satisfies
\[
\mathcal C_{\mathbf v}(n)=2n
\qquad\text{for all } n\in\mathbb N,\ n\ge1.
\]
If the language of a Rote sequence $\mathbf v$ is closed under the morphism $ER$ exchanging letters $0\leftrightarrow 1$,
it is called \emph{complementary symmetric} (CS).
\end{defi}
Rote \cite{Rote1994} proved that these sequences are closely related to
Sturmian sequences. In order to recall his characterization, let us denote by $S$ the mapping
\[
S : \{0,1\}^+ \to \{0,1\}^*
\]
such that for every $v_0 \in \{0,1\}$ we put $S(v_0)=\varepsilon$, and for every
\[
v=v_0v_1\cdots v_n \in \{0,1\}^+
\]
of length at least $2$ we define
\[
S(v_0v_1\cdots v_n)=u_0u_1\cdots u_{n-1},
\]
where
\[
u_i = v_i + v_{i+1} \pmod{2}
\quad\text{for all } i\in\{0,1,\ldots,n-1\}.
\]

Moreover, for every $\mathbf v\in\{0,1\}^{\mathbb N}$ we set $S(\mathbf v)=\mathbf u$, where
\[
u_i = v_i + v_{i+1} \pmod{2}
\quad\text{for all } i\in\mathbb N.
\]

\begin{example}
We have
\[
\overline{001110}=110001
\quad\text{and}\quad
S(001110)=S(110001)=01001.
\]
Clearly, the images of $v$ and $\overline{v}$ under $S$ coincide for each
$v\in\{0,1\}^+$.
\end{example}

\begin{proposition}[{\cite{Rote1994}}]\label{pro:RoteVersusSturm}
Let $\mathbf u$ and $\mathbf v$ be two sequences over $\{0,1\}$ such that
$\mathbf u=S(\mathbf v)$. Then $\mathbf v$ is a complementary symmetric
Rote sequence if and only if $\mathbf u$ is a Sturmian sequence.
\end{proposition}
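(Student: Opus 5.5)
The plan rests on the fact that $S$ is exactly two-to-one on words: for $w,w'\in\{0,1\}^{n+1}$, $S(w)=S(w')$ if and only if $w'\in\{w,\overline w\}$. Indeed, a word is determined by its first letter together with its image under $S$, since $v_{i+1}=v_i+u_i \pmod 2$. If $\mathbf u=S(\mathbf v)$, then $\Lang_n(\mathbf u)=S(\Lang_{n+1}(\mathbf v))$. Hence
$$\mathcal C_{\mathbf v}(n+1)=\sum_{x\in\Lang_n(\mathbf u)}\#\bigl(S^{-1}(x)\cap\Lang_{n+1}(\mathbf v)\bigr),$$
and every summand is $1$ or $2$. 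All the work lies in controlling these summands.

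\emph{($\Rightarrow$)} Let $\mathbf v$ be a CS Rote sequence. Since its language is closed under $ER$, every summand equals $2$, so $2(n+1)=\mathcal C_{\mathbf v}(n+1)=2\,\mathcal C_{\mathbf u}(n)$, i.e.\ $\mathcal C_{\mathbf u}(n)=n+1$ for $n\ge 1$. For $n=0$ the value $1$ holds trivially. It remains to check that $\mathbf u$ is aperiodic. If $\mathbf u$ were eventually periodic, $\mathcal C_{\mathbf u}$ would be bounded, contradicting the formula. So $\mathbf u$ is Sturmian.

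\emph{($\Leftarrow$)} Let $\mathbf u$ be Sturmian. It suffices to prove that $\Lang(\mathbf v)$ is closed under complementation. Then every summand is $2$, so $\mathcal C_{\mathbf v}(n+1)=2(n+1)$, together with $\mathcal C_{\mathbf v}(1)=2$. The key observation is that the preimage of $x$ read at position $i$ starts with $v_i=v_0+|u_0\cdots u_{i-1}|_1 \pmod 2$. Closure therefore amounts to showing that every factor $x$ of $\mathbf u$ occurs at positions $i$ for which $|u_0\cdots u_{i-1}|_1$ takes both parities.

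Suppose instead that some $x$ has a single preimage in $\Lang(\mathbf v)$. Then every factor of $\mathbf u$ containing $x$ also has a single preimage. Since $\mathbf u$ is uniformly recurrent, for all large $n$ every factor of length $n$ contains $x$, which gives $\mathcal C_{\mathbf v}(n+1)=\mathcal C_{\mathbf u}(n)=n+1$. So $\mathbf v$ is aperiodic (as $\mathbf u$ is) and quasi-Sturmian. This is the step I expect to be the main obstacle: turning this information into a contradiction with the balance of $\mathbf u$.

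My first attempt would use balance, treating first the model case where $\mathbf v$ is itself Sturmian; a general quasi-Sturmian $\mathbf v$ may need an additional reduction. In the model case, one letter, say $1$, is isolated, and the blocks of $0$'s have lengths $k$ and $k+1$. A $1$ in $\mathbf u$ marks a letter change in $\mathbf v$, so $\mathbf u$ contains $11$ (around an isolated $1$) and also $0^{k}$ between changes. If $k\ge 2$, then $00$ and $11$ are both factors, contradicting balance. If $k=1$, the factors $111$ and $100$ (weights $3$ and $1$) occur, again contradicting balance. If the general quasi-Sturmian case resists this, I would instead invoke the known irrationality of letter frequencies and the two return words of Sturmian sequences. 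The aim there is to find a return word to $x$ of odd weight, which would show directly that both parities occur.
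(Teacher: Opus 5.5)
The paper gives no proof of this proposition (it is quoted from Rote), so your argument has to stand on its own. Several parts of it are correct:
\begin{itemize}
\item the two-to-one analysis of $S$;
\item the direction ($\Rightarrow$);
\item the reduction of ($\Leftarrow$) to closure of $\Lang(\mathbf v)$ under complementation;
\item the step showing that a factor $x$ of $\mathbf u$ with a single preimage forces $\mathcal C_{\mathbf v}(n+1)=\mathcal C_{\mathbf u}(n)=n+1$ for all large $n$.
\end{itemize}
As written, however, ($\Leftarrow$) is not finished. You call what remains the ``main obstacle'' and offer only a balance argument for a special case plus a vague fallback.

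The missing observation is that you are already done. You have $\mathcal C_{\mathbf v}(m)=m$ for all large $m$, and $\mathbf v$ is aperiodic, since otherwise $\mathbf u=S(\mathbf v)$ would be eventually periodic. The Morse--Hedlund theorem gives $\mathcal C_{\mathbf v}(m)\ge m+1$ for every $m$ for aperiodic sequences. This contradiction closes the proof in one line; ``quasi-Sturmian with constant $0$'' is an empty class.

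You should drop the balance detour, and not only because it is unnecessary:
\begin{itemize}
\item Its model case ($\mathbf v$ Sturmian, i.e.\ $\mathcal C_{\mathbf v}(m)=m+1$) is incompatible with the equality $\mathcal C_{\mathbf v}(m)=m$ you had just derived, so it says nothing about the situation at hand.
\item The general case is left open.
\item The subcase $k=1$ is computed incorrectly. With isolated $1$'s and $0$-blocks of lengths $1$ and $2$, the factors of length $4$ of $\mathbf v$ are $0010,0100,0101,1001,1010$. Their images under $S$ are $011,110,111,101,111$. So $100$ is not a factor of $\mathbf u$, every factor of $\mathbf u$ of length $3$ has weight $2$ or $3$, and no imbalance appears there.
\end{itemize}
Your parity remark about $|u_0\cdots u_{i-1}|_1$ is likewise never needed.
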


This characterization allows one to show  that any CS Rote sequence  $\vv$ contains infinitely many $R$-palindromes and infinitely many $E$-palindromes. Moreover, $\mathbf v$ is uniformly recurrent since the associated Sturmian sequence $\uu$ is uniformly recurrent.
Let us explain the reason: It suffices to show that any prefix $v$ of $\vv$ occurs with bounded gaps. Denote by $z$ the prefix of $\vv$ having $\overline{v}$ as suffix and of course $v$ as prefix. It exists since $\vv$ is closed under $ER$. The factor $S(z)$ of $\uu$ occurs with bounded gaps since $\uu$ is uniformly recurrent. To each occurrence of $S(z)$ corresponds an occurrence of $z$ or $\overline{z}$ in $\vv$ and both $z$ and $\overline z$ contains $v$. Consequently, $v$ occurs with bounded gaps in $\vv$.

It implies the following corollary.   
\begin{corollary} The language of any complementary symmetric Rote sequence is closed under $H$.  
\end{corollary}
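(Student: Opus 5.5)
We need closure of the language of a CS Rote sequence $\vv$ under $R$, $E$ and $ER$. Since $H=\{I,E,R,ER\}$ and $E=ER\circ R$, it suffices to prove closure under $ER$ and under $R$. Closure under $ER$ (complementation) is part of the definition of a CS Rote sequence. So the only real work is closure under reversal $R$.

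For reversal, I would pass to the associated Sturmian sequence $\uu=S(\vv)$ from Proposition~\ref{pro:RoteVersusSturm}. It is classical that the language of a Sturmian sequence is closed under $R$. Take $w\in\Lang(\vv)$ with $|w|\ge 2$; single letters are trivial, since both letters occur by complement closure. Then $S(w)\in\Lang(\uu)$, so $R(S(w))\in\Lang(\uu)$.

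Directly from the definition of $S$ we have $S(R(w))=R(S(w))$: the sum $v_i+v_{i+1}$ is symmetric, so reading $w$ backwards gives the pairwise sums in reverse order. Hence $S(R(w))$ occurs in $\uu$, say at position $j$. The factor $z=v_j\cdots v_{j+|w|-1}$ of $\vv$ then satisfies $S(z)=S(R(w))$.

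The key step is injectivity of $S$ up to complement. Given $S(z)$, the word $z$ is determined by its first letter, since $v_{i+1}=v_i+u_i \pmod 2$. So $S(z)=S(y)$ for words of equal length implies $z=y$ or $z=\overline{y}$. Therefore $z\in\{R(w),\overline{R(w)}\}$. In the second case, closure under complementation gives $R(w)=\overline{z}\in\Lang(\vv)$. In either case $R(w)\in\Lang(\vv)$.

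Combining the two closures, $E(w)=\overline{R(w)}\in\Lang(\vv)$, so the language is closed under all of $H$.

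The only nontrivial input is the closure of Sturmian languages under reversal. I would cite this as a standard fact: Sturmian sequences are rich and contain arbitrarily long palindromes, and uniform recurrence then gives $R$-closure. Everything else is the elementary lifting argument through $S$ just described, which is the step requiring a little care with the two possible preimages.
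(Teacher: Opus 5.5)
Your proof is correct, but it takes a different route from the paper's.

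\textbf{The paper's route.} It derives the corollary from two properties of $\vv$ itself.
\begin{enumerate}
\item $\vv$ contains infinitely many $R$-palindromes and infinitely many $E$-palindromes. The paper only says this follows from Rote's characterization and leaves the details to the reader.
\item $\vv$ is uniformly recurrent. This is proved by lifting occurrences of $S(z)$, where $z$ is a prefix of $\vv$ beginning with $v$ and ending with $\overline{v}$.
\end{enumerate}
Together these put every factor inside arbitrarily long $R$-palindromes and $E$-palindromes, which gives closure under $R$ and $E$.

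\textbf{Your route.} You transfer closure under reversal directly from the Sturmian sequence $\uu=S(\vv)$. You use the identity $S\circ R=R\circ S$ and the fact that $S$ restricted to words of a fixed length is exactly two-to-one, with fibres $\{y,\overline{y}\}$. You then get $E$-closure from $E(w)=\overline{R(w)}$ and complement closure.

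\textbf{Comparison.} Your argument is shorter and more self-contained. It needs neither uniform recurrence of $\vv$ nor the palindrome-transfer step the paper leaves implicit. Done carefully, that step matches odd-length palindromes of $\uu$ with central letter $1$ to $E$-palindromes of $\vv$, and the remaining palindromes of $\uu$ to $R$-palindromes of $\vv$.

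The paper's route is longer but produces more along the way: uniform recurrence of $\vv$ and an abundance of both kinds of palindromes. The palindrome structure is the setting in which the later use of $H$-richness of CS Rote sequences is framed, via the citation to \cite{PeSt2016}.

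Both arguments ultimately rest on the standard palindromic structure of Sturmian sequences. In your case this enters through the classical fact that Sturmian languages are closed under reversal.
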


\begin{thm}\label{thm:Rote} The $H$-complexity  of every  CS Rote sequence equals 
$${\mathcal C}_H(n)=\lfloor \tfrac{n}{2} \rfloor+1 \quad \text{for every $n\in \N$}.$$
\end{thm}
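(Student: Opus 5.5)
The natural route is through Proposition~\ref{pro:CGforGrich}: if every CS Rote sequence $\vv$ is $H$-rich, then, since $\mathcal C(n)=2n$ for $n\ge 1$, we get $\Delta\mathcal C(n+1)=2$ and hence $\mathcal C_H(n+2)=\mathcal C_H(n)+1$ for all $n\ge1$. It then only remains to fix the initial values $\mathcal C_H(0)=1$, $\mathcal C_H(1)=1$ (since $0\sim_H 1$) and $\mathcal C_H(2)=2$, after which induction gives $\lfloor n/2\rfloor+1$. For $n=2$: a CS Rote sequence has four factors of length $2$, namely $00,11,01,10$; here $00\sim_H 11$, while $01$ and $10$ are $E$-palindromes and $R(01)=10$, $E(01)=01$, $\overline{01}=10$, so $01\sim_H10$. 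These form two classes, so $\mathcal C_H(2)=2$. (Alternatively, apply Lemma~\ref{lem:vztahCGC} with $\mathcal P_R(2)=2$, $\mathcal P_E(2)=2$.)

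The main step is $H$-richness, i.e.\ equality in \eqref{eq:GrichnessRE} for all $n\ge1$. Since $\Delta\mathcal C(n)=2$, we need
\[
\mathcal P_R(n)+\mathcal P_R(n+1)+\mathcal P_E(n)+\mathcal P_E(n+1)=6 .
\]
I would obtain this by computing the palindromic complexities directly via Proposition~\ref{pro:RoteVersusSturm}. Let $\uu=S(\vv)$ be Sturmian. A word $v$ of length $m\ge2$ satisfies $S(R(v))=R(S(v))$ and $S(\overline v)=S(v)$. Hence $v$ is an $R$-palindrome or an $E$-palindrome only if $S(v)$ is a palindrome. Conversely, if $S(v)$ is a palindrome, then $R(v)\in\{v,\overline v\}$, because $S$ is exactly two-to-one onto its image, with fibres $\{v,\overline v\}$. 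So $v$ is an $R$-palindrome or an $E$-palindrome, which are mutually exclusive, and the parity of the length decides which. Palindromes of odd length $m$ correspond to $S(v)$ of even length $m-1$, and vice versa. Moreover, the preimages $v,\overline v$ of a factor of $\uu$ both lie in $\Lang(\vv)$ by closedness under $ER$.

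Counting then gives the following. For odd $m$, $\mathcal P_R(m)=2\,\mathcal P_{\uu}(m-1)=2$ and $\mathcal P_E(m)=0$. For even $m\ge2$, $\mathcal P_E(m)=2\,\mathcal P_{\uu}(m-1)=4$ and $\mathcal P_R(m)=0$. Here I use the classical Sturmian palindromic complexity: one palindrome of each even length and two of each odd length. Summing over two consecutive lengths gives $2+4=6$, as required. One should double-check that the parity assignment is right. For an $E$-palindrome $v$ of length $m$, $v_i\ne v_{m-1-i}$; this forces $m$ even, and then $S(v)$ has odd length $m-1$, where the Sturmian count is $2$. For an $R$-palindrome, even $m$ would make the middle pair of $v$ equal while $R(v)=v$ is still allowed; so I must verify that case carefully and not just assume odd length.

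The main obstacle is exactly this parity bookkeeping. If it goes through cleanly, a shortcut avoiding richness is available: apply Lemma~\ref{lem:vztahCGC} directly with $\mathcal P_R(n)+\mathcal P_E(n)\in\{2,4\}$. This gives $\mathcal C_H(n)=(2n+\mathcal P_R(n)+\mathcal P_E(n))/4$, which is the closed formula once the palindromic counts by parity of $n$ are pinned down, yielding $(2n+2)/4$ or $(2n+4)/4$ in the appropriate cases. I would present this direct computation as the proof, with Proposition~\ref{pro:CGforGrich} as a consistency check.
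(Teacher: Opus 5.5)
Your argument is correct in substance, but it is organized differently from the paper's. The paper computes no palindromic complexities. It quotes the known fact that CS Rote sequences are $H$-rich, so Proposition~\ref{pro:CGforGrich} with $\Delta\mathcal C(n)=2$ gives $\mathcal C_H(n+2)=\mathcal C_H(n)+1$ for $n\ge1$, and the values at $n=0,1,2$ start the induction. You instead derive $\mathcal P_R(m)+\mathcal P_E(m)$ from Proposition~\ref{pro:RoteVersusSturm} and the classical palindromic complexity of Sturmian sequences (Droubay--Pirillo), and substitute into Lemma~\ref{lem:vztahCGC}. This gives the closed formula at every length directly, without induction. As a by-product it proves the $H$-richness that the paper imports; the price is relying on the Sturmian palindrome count instead. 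Lemma~\ref{lem:vztahCGC} needs the language of $\vv$ to be closed under $H$, so say so explicitly. You can quote the corollary preceding the theorem, or note that $S\circ R=R\circ S$, closedness of Sturmian languages under reversal and the CS property give it at once.

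One statement in your proposal is false, although it does no damage: for even $m\ge2$ you do not have $\mathcal P_R(m)=0$ and $\mathcal P_E(m)=4$. If $|v|=m$ is even and $S(v)$ is a palindrome, then $v$ is an $R$-palindrome when the central letter of $S(v)$ is $0$, and an $E$-palindrome when it is $1$. The two Sturmian palindromes of each odd length have distinct central letters, so in fact $\mathcal P_R(m)=\mathcal P_E(m)=2$. Already for $m=2$ your own list has $00,11$ as $R$-palindromes and $01,10$ as $E$-palindromes.

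The ``parity bookkeeping'' you single out as the main obstacle is therefore unnecessary. Both Lemma~\ref{lem:vztahCGC} and the richness equality involve only the sum $\mathcal P_R(m)+\mathcal P_E(m)$. Your fibre argument already controls that sum: the fibres of $S$ are $\{v,\overline v\}$, both lie in $\Lang(\vv)$, and each is exactly one of an $R$- or $E$-palindrome. Hence $\mathcal P_R(m)+\mathcal P_E(m)=2\,\mathcal P_{\uu}(m-1)$ for all $m\ge1$, i.e.\ $2$ for odd $m$ and $4$ for even $m$, giving $\mathcal C_H(m)=(2m+2)/4$ or $(2m+4)/4$ as you say. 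Drop the $R$/$E$ split and state only the sum.
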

\begin{proof} For $n=0, 1, 2$ the formula clearly holds.  By \cite{PeSt2016}, any CS Rote word is $H$-rich and thus we can  use Proposition \ref{pro:CGforGrich}. As $\mathcal{C}(n) = 2n$ for $n\geq 1$,  we get  $\Delta \mathcal{C}(n) = 2$ for  $n \geq 1$. Hence, 

  \centerline{$ \mathcal{C}_H(n+2) = \mathcal{C}_H(n) + 1
$ for  $n\geq 1$.}

 \noindent The formula for ${\mathcal C}_H$ follows.
\end{proof}
\begin{rem} Compare the result from Theorem~\ref{thm:Rote} with the reflection complexity of any CS Rote sequence, derived in~\cite{AlCaLiShSt2025}, 
$$r(n)=n +1 \ \ \  \text{for  every $n\in \N$.}$$
\end{rem}

To close this section, let us determine the $H$-complexity for two Rote sequences that are not complementary symmetric. 

\begin{example} 

Let $\uu$ be the fixed point of the morphism $\varphi$, 
     $$\varphi: \left\{\begin{array}{ll}
    0\mapsto 001\,;\\
    1\mapsto 111\,.
    \end{array}\right.$$
The sequence $\uu$ is Rote~\cite{Rote1994}, i.e., the factor complexity satisfies ${\mathcal C}(n)=2n$ for all $n\in\mathbb N, n\geq 1$. The sequence $\uu$ is recurrent, but not uniformly recurrent as there are blocks of ones of any size. Its language is closed under $R$, but not under $E$ since no factor of length $\geq 5$ has its $E$-image in ${\mathcal L}(\uu)$. 
Consequently, for $n\geq 5$, the $H$-complexity and the reflection complexity coincide. It is not difficult to see that there are two palindromes of every length $n\geq 1$, therefore ${\mathcal C}_H(n)=n+1$ for all $n\geq 5$.
\end{example}

The  Rote sequence  $\uu$ defined in the following example comes from  \cite{Currie2025}, where  Currie and Rampersad show that the critical exponent of $\uu$  is the smallest possible among all Rote sequences.

\begin{example}
Let $\uu = \pi(\vv)$, where $\vv$ is the fixed point of the morphism $\varphi$,

 $$\varphi: \left\{\begin{array}{ll}
    0\mapsto 0121\\
    1\mapsto 021\\
    2 \mapsto 01 
    \end{array}\right. \quad \text{and} \quad  \pi: \left\{\begin{array}{ll}
    0\mapsto 011\\
    1\mapsto 0\\
    2 \mapsto 01 
    \end{array}\right.  \,. $$
The sequence $\uu$ is Rote~\cite{Currie2025}. The language is uniformly recurrent for $\uu$ is a~morphic image of a~fixed point of primitive morphism. Its language is not closed under $R$, $E$ or $ER$ because no factor of length $\geq 22$ has its $R$-image, $E$-image and $ER$-image in ${\mathcal L}(\uu)$. Therefore, for $n\geq 22$, the $H$-complexity and the factor complexity coincide, i.e., ${\mathcal C}_H(n)=2n$ for all $n\geq 22$.   
\end{example}

\subsection{$H$-complexity of the generalized Thue-Morse sequence}

A~\textit{generalized binary Thue-Morse sequence} ${\mathbf t}_{b,2}$ with $b\in \mathbb N, b\geq 2$, is the fixed point of the morphism 
$$\varphi_{b,2}: 0 \mapsto (01)^{b/2}, \ 1 \mapsto (10)^{b/2}\,,$$
where $(01)^{b/2}$ is the prefix of length $b$ of $(01)^\omega$.
In particular, ${\mathbf t}_{2,2}$ is the famous Thue-Morse sequence.
For $b$ odd, the sequence ${\mathbf t}_{b,2}=(01)^{\omega}$.
For $b$ even, the factor complexity of ${\mathbf t}_{b,2}$ is known~\cite{St2012}, see Table~\ref{tab}.
\begin{table}[htbp]
\centering
\begin{tabular}{c|c|c}
$n$ & $\Delta\mathcal{C}(n)$ & $\mathcal{C}(n)$ \\
\hline
0 & $1$ & 1 \\

\hline
$1 \leq n \leq b$ & $2$ & $2n$ \\
\hline
$b^k + 1 + \ell$ & $4$ & $4(n-1) - 2(b^k - b^{k-1})$ \\
$k \ge 1$, $0 \leq \ell < b^k - b^{k-1}$ & & \\
\hline
$(2b-1)b^{k-1} + 1 + \ell$ & $2$ & $4(n-1) - 2(b^k - b^{k-1} + \ell)$ \\
$k \geq 1$, $0 \leq \ell < b^{k+1} - 2b^k + b^{k-1}$ & & \\

\end{tabular}
\caption{Values of $\Delta\mathcal{C}(n)$ and $\mathcal{C}(n)$ of the generalized Thue-Morse sequence $t_{b,2}$ for the aperiodic case with $b$ even.}\label{tab}
\end{table}

\begin{thm}
The $H$-complexity of the generalized binary Thue-Morse sequence ${\mathbf t}_{b,2}$ with $b\in \mathbb N, b\geq 2$, satisfies:
\begin{itemize}
\item For $b$ odd,
$${\mathcal C}_H(n)=1\ \text{for all $n\in \mathbb N$}\,.$$
\item For $b$ even and $n\geq 2b$, we have
$${\mathcal C}_H(n)=\left\{\begin{array}{ll}
    \frac{1}{4}\, \mathcal{C}(n) & \text{for $n$ odd}\,;\\
    1+\frac{1}{4}\mathcal{C}(n+1) & \text{for $n$ even}\,.
    \end{array}\right. $$
\item For $b$ even and $0\leq 2n<2b$, we have 
    $${\mathcal C}_H(2n)={\mathcal C}_H(2n+1)=\left\{\begin{array}{ll}
    n+1 & \text{for  $2n\leq b$}\,;\\
   2n+1-\frac{b}{2} & \text{for $b < 2n$}\,.\\
       \end{array}\right.$$

Moreover, ${\mathcal C}_H(n+2) - {\mathcal C}_H(n) \in \{1,2\}$ for all $n\in \mathbb N$ and both values are attained infinitely many times.
\end{itemize}
\end{thm}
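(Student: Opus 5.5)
The case $b$ odd is immediate: ${\mathbf t}_{b,2}=(01)^\omega$ has exactly two factors of each length $n\geq 1$. These are either $w$ and $\overline w$ for $n$ odd, or $w$ and $R(w)$ for $n$ even. In both cases they are $H$-equivalent, so ${\mathcal C}_H(n)=1$.

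For $b$ even I would use Lemma~\ref{lem:vztahCGC}. The language of ${\mathbf t}_{b,2}$ is closed under $\overline{\,\cdot\,}$, because $\varphi_{b,2}$ commutes with complementation. It is also closed under $R$: for $b$ even, $\varphi_{b,2}$ reverses to its conjugate, and the Thue--Morse-type language is known to be mirror-closed. Hence the language is closed under $H$, and
\[
\mathcal{C}_H(n)=\tfrac14\bigl(\mathcal{C}(n)+\mathcal{P}_R(n)+\mathcal{P}_E(n)\bigr).
\]
Every non-empty $E$-palindrome has even length. An $R$-palindrome of even length $2m\geq 2$ has central factor $aa$, and a palindrome of odd length has a central letter. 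Since ${\mathbf t}_{b,2}$ is a fixed point of a $b$-uniform morphism with both images alternating, palindromes of length beyond a small bound $\sim b$ cannot be long. I would show the following by the standard desubstitution argument for Thue--Morse. A long palindrome (resp. antipalindrome) centred in the middle of $00$ or $11$ (resp. $01$, $10$) must be centred at a boundary between blocks $\varphi_{b,2}(x)\varphi_{b,2}(y)$. Desubstituting then shows that for $n\geq 2b$ we have $\mathcal{P}_R(n)=\mathcal{P}_E(n)=0$ when $n$ is odd. For $n$ even it gives $\mathcal{P}_R(n)+\mathcal{P}_E(n)=4+\Delta\mathcal{C}(n)$, which is the even-length part of the $H$-richness equality of Proposition~\ref{prop:Grich}. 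Rather than compute these counts directly, I would invoke the known $H$-richness of ${\mathbf t}_{b,2}$ (from \cite{PeSt2013} or \cite{PeSt2016}). Combined with vanishing at odd lengths, the equality in \eqref{eq:GrichnessRE} gives $\mathcal{P}_R(n)+\mathcal{P}_E(n)=4+\Delta\mathcal{C}(n)$ for every even $n$, since the odd neighbour contributes $0$.

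Plugging this in yields $\mathcal{C}_H(n)=\tfrac14\mathcal{C}(n)$ for $n$ odd. For $n$ even it yields
\[
\mathcal{C}_H(n)=\tfrac14\bigl(\mathcal{C}(n)+4+\Delta\mathcal{C}(n)\bigr)=1+\tfrac14\mathcal{C}(n+1),
\]
which is the claimed formula. For $2n<2b$ I would check directly, using Table~\ref{tab}. There $\mathcal{C}(m)=2m$ for $m\leq b$, and the palindrome counts for small lengths are explicit, because factors of length $\leq b$ are essentially alternating words with at most one doubled letter. This gives ${\mathcal C}_H(2n)={\mathcal C}_H(2n+1)$ with the two stated cases: growth by one per step up to $2n=b$, then growth by two per step once $\Delta\mathcal{C}=4$ starts at $n=b+1$.

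For the final claim, Proposition~\ref{pro:CGforGrich} gives $\mathcal{C}_H(n+2)-\mathcal{C}_H(n)=\tfrac12\Delta\mathcal{C}(n+1)$ for $n\geq1$. By Table~\ref{tab}, $\Delta\mathcal{C}\in\{2,4\}$ for $n\geq 1$, and each value occurs on infinitely many intervals, so the difference lies in $\{1,2\}$ and both values occur infinitely often; $n=0$ is checked by hand. The main obstacle I expect is rigorously establishing the palindrome counts. Concretely, this means the vanishing of odd-length palindromes for $n\geq 2b$ and the exact small-length values near the threshold $2b$. If $H$-richness cannot simply be cited, I would prove it by desubstitution: every bispecial factor of length $\geq 2b$ is the image $\varphi_{b,2}(w)$ extended symmetrically, which is where the care is needed.
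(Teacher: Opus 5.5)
Your route is the paper's: closure under $H$ and Lemma~\ref{lem:vztahCGC}, $H$-richness of $\mathbf t_{b,2}$ (the paper cites \cite{St2012}), vanishing of palindromes at odd lengths $\ge 2b+1$, then Table~\ref{tab} and Proposition~\ref{pro:CGforGrich}.

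\textbf{The odd-length vanishing is not established.} You acknowledge this as the main obstacle. The paper simply imports from \cite{St2012} that the longest odd-length palindrome of $\mathbf t_{b,2}$ has length $2b-1$. Your desubstitution sketch would not deliver it:
\begin{itemize}
\item It only speaks about centres of even-length palindromes.
\item Its claim that long antipalindromes are centred at block boundaries is false. Since $E(\varphi_{b,2}(v))=\varphi_{b,2}(R(v))$, the word $\varphi_{b,2}(\overline{y}y\overline{y})$ is an $E$-palindrome centred in the middle of a block. In Thue--Morse, for instance, $0011$ sits inside $10\,01\,10$.
\end{itemize}
Either cite \cite{St2012} as the paper does, or use a short direct argument; $E$-palindromes have even length anyway, so only odd $R$-palindromes matter.
\begin{itemize}
\item Each block $\varphi_{b,2}(t_k)$ is alternating and occupies positions $kb,\dots,kb+b-1$. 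So a factor $aa$ can only sit at positions $kb-1,kb$.
\item Reflection about the central position $c$ of an odd palindrome maps this pair to $\{2c-kb,\,2c-kb+1\}$. That pair would have to be $\{k'b-1,k'b\}$, forcing $2c+1=(k+k')b$, which is impossible for $b$ even. Hence odd palindromes are alternating.
\item Doubled pairs are $b\ge 2$ apart, so $\mathbf t_{b,2}$ contains no $aaa$. Hence there are never three equal consecutive blocks, and alternating factors have length at most $2b$.
\end{itemize}

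\textbf{The small-length check.} Your observation that short factors have at most one doubled letter only covers lengths up to $b+1$. For even lengths $m$ with $b+2\le m\le 2b-1$ there are additional $E$-palindromes containing two doubled letters, such as $0(01)^{b/2}1$. There $\mathcal P_E(m)=4$, and a count based on that observation goes wrong; it even yields a non-integer $\mathcal C_H(m)$.

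The clean way is what the paper does, and what you already use for the final claim. Run $\mathcal C_H(n+2)=\mathcal C_H(n)+\tfrac12\Delta\mathcal C(n+1)$ starting from $\mathcal C_H(1)=1$ and $\mathcal C_H(2)=2$. Because $b$ is even, $\Delta\mathcal C(2n+1)=\Delta\mathcal C(2n+2)$ whenever $2n+2\le 2b-2$. This gives $\mathcal C_H(2n)=\mathcal C_H(2n+1)$ together with both stated formulas.

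A minor slip: the odd neighbour contributes $0$ in \eqref{eq:GrichnessRE} only for even $n\ge 2b$, not for every even $n$.
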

\begin{proof}
The sequence ${\mathbf t}_{b,2}$ is $H$-rich~\cite{St2012}.
For $b$ odd, the sequence ${\mathbf t}_{b,2}=(01)^{\omega}$, therefore ${\mathcal C}_H(n)=1$ for all $n \in \mathbb N$. 
For $b$ even, as shown in~\cite{St2012}, the longest palindromic factor of odd length has the length equal to $2b-1$.
Moreover, $E$-palindromes cannot have odd length. 
Consequently, since the language of ${\mathbf t}_{b,2}$ is closed under $H$, the formula~\eqref{eq:CPRPECG} holds and the equality in~\eqref{eq:GrichnessRE} is attained. We get the following cases.
\begin{itemize}
\item For $n$ odd, $n\geq 2b+1$, we have $ \mathcal{P}_R(n)=\mathcal{P}_E(n)=0$, therefore $${\mathcal C}_H(n)=\frac{1}{4}\, \mathcal{C}(n)\,.$$
\item For $n$ even, $n\geq 2b$, we have $ \mathcal{P}_R(n+1)=\mathcal{P}_E(n+1)=0$, therefore
$$\begin{array}{rcl}
{\mathcal C}_H(n)&=&\frac{1}{4}\, \Bigl(  \mathcal{C}(n)+ \mathcal{P}_R(n)+ \mathcal{P}_E(n)\Bigr)\\
&=&\frac{1}{4}\, \Bigl( \mathcal{C}(n)+ \mathcal{P}_R(n)+ \mathcal{P}_E(n)+\mathcal{P}_R(n+1)+ \mathcal{P}_E(n+1)\Bigr)\\
&=&\frac{1}{4}\, \Bigl( \mathcal{C}(n)+4+\mathcal{C}(n+1) - \mathcal{C}(n)\Bigr)\\
&=&1+\frac{1}{4}\mathcal{C}(n+1)\,. \end{array}$$
\end{itemize}

Using Table~\ref{tab} together with the formula ${\mathcal C}_H(n+2)={\mathcal C}_H(n)+\frac{1}{2}\Delta{\mathcal C}(n+1)$ for $n\geq 1$, we get the values of ${\mathcal C}_H(n)$ for $n\leq 2b-1$ and the fact that ${\mathcal C}_H(n+2)-{\mathcal C}_H(n)\in \{1,2\}$ (${\mathcal C}_H(2)={\mathcal C}_H(0)+1$ clearly holds).
\end{proof}

\subsection{Asymptotic relationship between ${\mathcal{C}_H(n)}$ and ${\mathcal{C}(n)}$
}
Let us recall a~conjecture on the asymptotic relationship between the reflection and factor complexity stated by Allouche et al.~\cite{AlCaLiShSt2025}.

\begin{conjecture}\label{con:AS} If $\uu$ is an aperiodic sequence such that $L:=\lim\limits_{n \to +\infty}{{r}(n)}/{\mathcal{C}(n)}$ exists, then $L=\frac12$ or $L=1$. 
\end{conjecture}
Under the assumption of closedness of the language under $H$, we prove an analogous statement for the $H$-complexity. 

\begin{thm}\label{thm:limita}  Let the language of an aperiodic sequence  $\uu$ be closed under $H$. If  $\lim\limits_{n \to +\infty}{\mathcal{C}_H(n)}/{\mathcal{C}(n)}$ exists,    then 
$$\lim_{n \to +\infty}\frac{\mathcal{C}_H(n)}{\mathcal{C}(n)} = \frac14.$$
\end{thm}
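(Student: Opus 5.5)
By Lemma~\ref{lem:vztahCGC}, for every $n\geq 1$ we have
\[
\frac{\mathcal{C}_H(n)}{\mathcal{C}(n)}=\frac14\Bigl(1+\frac{\mathcal{P}_R(n)+\mathcal{P}_E(n)}{\mathcal{C}(n)}\Bigr).
\]
So the statement reduces to showing that $\bigl(\mathcal{P}_R(n)+\mathcal{P}_E(n)\bigr)/\mathcal{C}(n)$ has $\liminf$ equal to $0$. If the limit $L$ of $\mathcal{C}_H(n)/\mathcal{C}(n)$ exists, then the limit of this ratio also exists and equals $4L-1$. It is therefore enough to show that the ratio tends to $0$ along some subsequence, which forces $4L-1=0$.

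The main tool is Proposition~\ref{prop:Grich}. It gives
\[
\mathcal{P}_R(n)+\mathcal{P}_E(n)\leq \mathcal{P}_R(n)+\mathcal{P}_R(n+1)+\mathcal{P}_E(n)+\mathcal{P}_E(n+1)\leq 4+\Delta\mathcal{C}(n).
\]
We then need $\Delta\mathcal{C}(n)/\mathcal{C}(n)\to 0$ along a subsequence. We argue by contradiction. Suppose $\Delta\mathcal{C}(n)\geq \varepsilon\,\mathcal{C}(n)$ for all large $n$. Then $\mathcal{C}(n+1)\geq(1+\varepsilon)\mathcal{C}(n)$, so $\mathcal{C}$ grows exponentially with rate at least $1+\varepsilon$. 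This is not yet a contradiction, since a binary sequence can have complexity up to $2^n$.

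This exponential case is the main obstacle, and I would bypass it with a counting argument instead of a growth argument. Palindromes of length $n$ are determined by their first $\lceil n/2\rceil$ letters. Moreover, $w\mapsto$ (prefix of length $\lceil n/2\rceil$) injects the $R$-palindromes of $\mathcal{L}_n(\uu)$ into $\mathcal{L}_{\lceil n/2\rceil}(\uu)$, and the same holds for $E$-palindromes. Hence
\[
\mathcal{P}_R(n)+\mathcal{P}_E(n)\leq 2\,\mathcal{C}(\lceil n/2\rceil).
\]
Combining this with the Proposition~\ref{prop:Grich} bound, the ratio is at most
\[
\min\Bigl\{\frac{4+\Delta\mathcal{C}(n)}{\mathcal{C}(n)},\ \frac{2\,\mathcal{C}(\lceil n/2\rceil)}{\mathcal{C}(n)}\Bigr\}.
\]

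I would finish with a dichotomy. Suppose $\liminf \Delta\mathcal{C}(n)/\mathcal{C}(n)>0$. Then $\mathcal{C}(n)\geq c(1+\varepsilon)^{n}$, which gives
\[
\frac{\mathcal{C}(n)}{\mathcal{C}(\lceil n/2\rceil)}\geq \prod_{k=\lceil n/2\rceil}^{n-1}(1+\varepsilon)\geq(1+\varepsilon)^{\lfloor n/2\rfloor}\to\infty,
\]
so the second term tends to $0$. Otherwise there is a subsequence with $\Delta\mathcal{C}(n)/\mathcal{C}(n)\to0$. Along it $\mathcal{C}(n)\to\infty$ by aperiodicity (Theorem~\ref{thm:MorseHedlund}), so the first term tends to $0$. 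In both cases $\liminf$ of the ratio is $0$.

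Since the limit is assumed to exist, $4L-1=0$, i.e. $L=\tfrac14$. The only delicate points are bookkeeping: Lemma~\ref{lem:vztahCGC} is stated only for $n\geq1$, and $\mathcal{C}(n)\to\infty$ must be guaranteed in the second branch. Both are routine.
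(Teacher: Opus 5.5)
Your argument is correct. It uses the same two estimates as the paper: $\mathcal{P}_R(n)+\mathcal{P}_E(n)\le 4+\Delta\mathcal{C}(n)$ from Proposition~\ref{prop:Grich}, and $\mathcal{P}_R(n)+\mathcal{P}_E(n)\le 2\,\mathcal{C}(\lceil n/2\rceil)$ from the half-length description of palindromes. The difference is the case split.

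The paper applies Fekete's lemma to the submultiplicative sequence $\mathcal{C}(n)$ and distinguishes $a=\lim_n\sqrt[n]{\mathcal{C}(n)}=1$ from $a>1$. In the first case it uses Stolz--Ces\`aro only to extract $\liminf_n \mathcal{C}(n+1)/\mathcal{C}(n)=1$. In the second it uses two-sided bounds $(a-\delta)^n<\mathcal{C}(n)<(a+\delta)^n$, with $\delta$ chosen so that $(a+\delta)/(a-\delta)^2<1$.

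You split directly on whether $\liminf_n \Delta\mathcal{C}(n)/\mathcal{C}(n)$ vanishes. The paper's first case then becomes your hypothesis. In your other case, the telescoping bound $\mathcal{C}(n)/\mathcal{C}(\lceil n/2\rceil)\ge(1+\varepsilon)^{\lfloor n/2\rfloor}$ replaces both Fekete's lemma and the $\delta$-estimates. This is shorter and entirely elementary; your intermediate claim $\mathcal{C}(n)\ge c(1+\varepsilon)^n$ is not even needed.

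The paper's split buys a slightly sharper by-product. In its case $a>1$ the ratio tends to $\tfrac14$ without assuming the limit exists. That unconditional case contains yours, because $\liminf_n \mathcal{C}(n+1)/\mathcal{C}(n)\le \lim_n\sqrt[n]{\mathcal{C}(n)}$. So the paper shows the existence hypothesis is needed only when $a=1$, i.e.\ for subexponential complexity.
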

\begin{proof}

Using 
Lemma \ref{lem:vztahCGC}  and the inequality \eqref{eq:GrichnessRE},
 we can write 
$$
\frac{\mathcal{C}_H(n)}{\mathcal{C}(n)}  =\frac{ \mathcal{C}(n)+ \mathcal{P}_R(n)+  \mathcal{P}_E(n)}{4\,\mathcal{C}(n)}  \ \geq  \ \frac14  $$
 and 
$$\frac{\mathcal{C}_H(n)}{\mathcal{C}(n)} \leq \frac{4+\mathcal{C}(n+1)- \mathcal{P}_R(n+1) -\mathcal{P}_E(n+1)  }{4\,\mathcal{C}(n)} \leq   \frac{1}{\mathcal{C}(n)} + \frac{\mathcal{C}(n+1)}{4\,\mathcal{C}(n)}. 
$$
The sequence $\uu$ is aperiodic and thus  $\mathcal{C}(n)$ tends to infinity.  Hence   \begin{equation}\label{eq:infsup}\frac14\leq \lim\limits_{n\to +\infty}  \frac{\mathcal{C}_H(n)}{\mathcal{C}(n)}   \leq  \liminf \limits_{n\to +\infty}   \frac{\mathcal{C}(n+1)}{4\,\mathcal{C}(n)}\,.\end{equation}

Since $\mathcal{C}(n+m)\leq \mathcal{C}(n)\mathcal{C}(m)$, we can apply Fekete's lemma for submultiplicative sequences which guarantees the existence of 
$$\lim_{n \to +\infty} \sqrt[n]{\mathcal{C}(n)} = \inf \{ \sqrt[n]{\mathcal{C}(n)} : n \in \mathbb{N}\}\,.$$ 

Let us discuss the following two cases. 
\bigskip

\noindent {\bf Case } $\lim\limits_{n \to +\infty} \sqrt[n]{\mathcal{C}(n)}=1$.  \quad By the Stolz-Ces\`aro theorem
\begin{equation}\label{eq:Cesaro}
\liminf_{n \to +\infty} \frac{\mathcal{C}(n+1)}{\mathcal{C}(n)} \leq \liminf_{n \to +\infty} \sqrt[n]{\mathcal{C}(n)}  =\lim _{n \to +\infty} \sqrt[n]{\mathcal{C}(n)}\,.
\end{equation}
 The statement follows by combination of inequalities  \eqref{eq:Cesaro} and \eqref{eq:infsup}. 

\bigskip

\noindent {\bf Case } $\lim\limits_{n \to +\infty} \sqrt[n]{\mathcal{C}(n)}=a>1$. \quad Equivalently, for  every $\delta\in (0,a)$ there exists $n_0 \in \N$ such that for every $n > n_0$ 
\begin{equation}\label{eq:bounds}
  (a-\delta)^n < \mathcal{C}(n) < (a+\delta)^n.   
\end{equation}

Observe that $R$-palindromes and $E$-palindromes are described by their left (or right) half. Thus  $\mathcal{P}_R(n)$ and $\mathcal{P}_E(n)$ are bounded from above by $ \mathcal{C}\bigl( \lfloor \frac{n+1}{2}\rfloor\bigr)$. Hence 
$$
 \frac{\mathcal{C}_H(n)}{\mathcal{C}(n)} =\frac{ \mathcal{C}(n)+\mathcal{P}_R(n)+ \mathcal{P}_E(n)}{4\,\mathcal{C}(n)} \leq \frac{ \mathcal{C}(n)+ 2\mathcal{C}\bigl( \lfloor \frac{n+1}{2}\rfloor\bigr)}{4\,\mathcal{C}(n)} = \frac14 + \frac{ \mathcal{C}\bigl( \lfloor \frac{n+1}{2}\rfloor\bigr)}{2\,\mathcal{C}(n)}\,. 
$$
To complete the proof, it is enough to show that the limit of the last term in the previous inequality is zero.

Using \eqref{eq:bounds},  we can  estimate for every sufficiently small $\delta >0$,
$$
0\leq \frac{ \mathcal{C}\bigl( \lfloor \frac{n+1}{2}\rfloor\bigr)}{\mathcal{C}(n)}  \leq \frac{(a+\delta)^{\frac{n+1}{2}}}{(a-\delta)^n} = (a+\delta)^{\frac12}\, A^\frac{n}2, \text{ where } 
A = \frac{a+\delta}{(a-\delta)^2}\,.
$$
Note that $\lim\limits_{\delta\to 0} \frac{a+\delta}{(a-\delta)^2} = \frac{1}{a} <1$, and thus we can choose $\delta$ small enough such that  $A< 1$ and consequently,  $\lim\limits_{n\to +\infty}\frac{ \mathcal{C}\bigl( \lfloor \frac{n+1}{2}\rfloor\bigr)}{\mathcal{C}(n)} = 0$.  

\end{proof}
\begin{rem}  Shallit~\cite{Shallit2026} studied the relation between the palindromic complexity  and the factor complexity. He showed that 
$$\lim\limits_{n \to +\infty}\frac{\mathcal{P}_R(n)\bigl(\log(\mathcal{P}_R(n)+1\bigr)}{\mathcal{C}(n)} = 0$$ for each aperiodic sequence $\uu$.

Let us recall that for $\uu$ whose language is closed under reversal, we can write 
$$ \frac{r(n)}{\mathcal{C}(n)} = \frac{\mathcal{C}(n)+\mathcal{P}_R(n)}{2\,\mathcal{C}(n)} = \frac12 + \frac{\mathcal{P}_R(n)}{2\,\mathcal{C}(n)}.   
$$
By the aforementioned result of Shallit, $\lim\limits_{n \to +\infty} \frac{r(n)}{\mathcal{C}(n)}=\frac{1}{2}$.
Therefore, Conjecture \ref{con:AS} is valid for aperiodic sequences with languages closed under reversal.

Note that if one succeeded in deriving a~similar result for the $E$-palindromic complexity ${\mathcal P}_E(n)$, then Theorem \ref{thm:limita} would hold without the assumption on the existence of the limit $\lim\limits_{n \to +\infty}\frac{\mathcal{C}_H(n)}{\mathcal{C}(n)}$.

\end{rem}

\section{Aperiodic sequences with the minimum\\ \mbox{$H$-complexity}}\label{sec:minimal}

 Aperiodic sequences with the minimum factor complexity are called Sturmian sequences. They also play an important role in the description of aperiodic sequences with the minimum reflection complexity. 
 \begin{thm}[\cite{DvPe2026}, Theorem 19]\label{thm:Rminimum} Let $\uu$ be a sequence. Then $r(n+2) = r(n)+1$ for every $n \in \N$ if and only if
\begin{itemize}
    \item either $\uu$ is a Sturmian sequence,

\item or there exist a Sturmian sequence $\mathbf v$ over $\{a,b\}$ and a letter $c \not \in \{a,b\}$  such that 
 $\uu = \pi(\mathbf v)$ or $\uu = c\pi(\mathbf v)$, where $\pi$ is a morphism given by 

\medskip 
\centerline{$\pi: a \mapsto ac, \ b\mapsto bc$. }
  
\end{itemize}    
\end{thm}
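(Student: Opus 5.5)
The plan has two directions; the "only if" direction carries the real work. For the \emph{if} direction we compute $r$ explicitly. A Sturmian sequence has language closed under $R$, and its palindromic complexity is $\mathcal P_R(n)=1$ for $n$ even and $\mathcal P_R(n)=2$ for $n$ odd. The reflection analogue of Lemma~\ref{lem:vztahCGC} then gives
\[
r(n)=\tfrac12\bigl(\mathcal C(n)+\mathcal P_R(n)\bigr)=\lfloor \tfrac n2\rfloor+1,
\]
so $r(n+2)=r(n)+1$. For $\uu=\pi(\vv)$ or $c\pi(\vv)$, the language is again closed under $R$, since $R(\pi(w))c$ is, up to the separators $c$, $\pi(R(w))$. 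We would then count factors of length $n$ according to the parity of the position of their first $c$. Factors of even length $2m$ correspond to factors of $\vv$ of length $m$ (two cases per factor). Factors of odd length $2m+1$ are either $c\,\pi(w)$ with $|w|=m$ or $\pi(w)$ with its final $c$ removed, $|w|=m+1$. Palindromes in $\uu$ of odd length centred at a $c$ or at a letter of $\{a,b\}$ correspond to palindromes of $\vv$. Plugging $\mathcal C_{\vv}(m)=m+1$ and the Sturmian palindromic complexity into $r=\frac12(\mathcal C+\mathcal P_R)$ gives $r(n+2)=r(n)+1$ by a routine computation.

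For the \emph{only if} direction, first note that $r(n+2)\neq r(n)$ for every $n$, so by Theorem~\ref{thm:r_periodicity} the sequence $\uu$ is aperiodic. Next we use the counting formula \eqref{eq:vypocetr(n+2)} with $G=\langle R\rangle$:
\[
\sum_{t\in\langle\uu\rangle_n}\mathcal T(t)=r(n)+1 .
\]
If ${\rm Ind}(n)>0$, Lemma~\ref{lem:IndexNEEQ} gives one class with $\mathcal T\ge 2$. Proposition~\ref{pro:RSaLS} supplies the class $t_L$ of a left special prefix of ${\bf x}_L$. Running the case analysis of Proposition~\ref{pro:neperiodicity} with equality forced, we aim to show two things. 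First, $\uu$ is recurrent, or becomes so after deleting one letter, since ${\rm Ind}(n)>0$ combined with $t_L$ would push the sum above $r(n)+1$ except in a degenerate prefix situation; this degenerate case is what produces the prefix $c$. Second, for every $n$ exactly one class $t$ has $\mathcal T(t)=2$ and all other classes have $\mathcal T=1$. Translating this back to ordinary factors: every factor of length $n$, up to reversal, has a unique two-sided extension up to reversal, except along the branch ${\bf x}_L$ and its mirror. The conclusion should be that the bispecial structure is that of a Sturmian sequence, possibly after a desubstitution.

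Then we split according to the alphabet. From $r(0)=1$ we get $r(2)=2$, so there are only two classes of length-$2$ factors up to reversal. If $\uu$ is binary, aperiodicity forces $01,10$ and exactly one of $00,11$, the usual Sturmian start. We would then show by induction on $n$ that there is exactly one left special factor of each length, using the uniqueness of the class with $\mathcal T=2$ and closedness under reversal (obtained from recurrence together with the counting). This gives $\Delta\mathcal C(n)=1$, hence $\uu$ is Sturmian. If $\#\mathcal A\ge 3$, two classes of length-$2$ factors force a three-letter alphabet whose length-$2$ factors are exactly $\{ac,ca\}$ and $\{bc,cb\}$. So $c$ separates the letters $a,b$, and $\uu=\pi(\vv)$ or $c\pi(\vv)$. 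The relation $r_{\uu}(2n+2)=r_{\uu}(2n)+1$ then transfers, via the counting in the first paragraph read backwards, to $\mathcal C_{\vv}(n+1)=\mathcal C_{\vv}(n)+1$, so $\vv$ is Sturmian.

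The main obstacle is the second paragraph: extracting from the equality $\sum_t\mathcal T(t)=r(n)+1$ enough structural information to get $\Delta\mathcal C(n)=1$. Reversal may merge a left special factor with its mirror right special factor, so the count of classes does not directly control the number of special factors. I would first handle palindromic versus non-palindromic bispecial classes separately, tracking how a class of size $1$ versus size $2$ contributes to $\mathcal T$, and show that any second left special factor produces an extra class with $\mathcal T\ge 2$.
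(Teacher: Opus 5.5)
\textbf{Main gap: recurrence is not established, and the argument offered for it fails.} The paper does not prove this statement; it quotes it as Theorem~19 of \cite{DvPe2026}. The closest in-paper argument is the proof of Theorem~\ref{thm:aperiodicRH}, and your outline follows its skeleton (a unique class with $\mathcal T=2$, Lemmas~\ref{lem:2veStejneTride} and~\ref{lem:simpleObservations1}). That proof, however, \emph{assumes} recurrence, whereas this statement does not.

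Your claim that ${\rm Ind}(n)>0$ together with $t_L$ pushes $\sum_t\mathcal T(t)$ above $r(n)+1$ is false. When ${\rm Ind}(n)>0$, the class of the length-$n$ prefix may have $\mathcal T=0$, and this exactly cancels the surplus from $t_L$ and $t_{\rm Ind}$. So the case analysis of Proposition~\ref{pro:neperiodicity} yields only $r(n+2)\ge r(n)+1$, which the hypothesis does not contradict.

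Your explanation of the prefix $c$ is also a misconception. $c\pi(\vv)$ has the same language as $\pi(\vv)$ and is uniformly recurrent: every prefix $c\pi(v_0\cdots v_k)$ reappears, because $v_0\cdots v_k$ reappears in $\vv$ after some letter. So every sequence in the conclusion is recurrent, and ``recurrent after deleting one letter'' is the wrong target.

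What works is a specific choice of $n$. Let $w$ be the shortest prefix such that neither $w$ nor $R(w)$ occurs at a positive position, and put $n=|w|-1$. Then the prefix class of length $n$ still has $\mathcal T\ge 1$, while ${\rm Ind}(n)>0$. Cases 2 and 3 of that proof therefore give $r(n+2)\ge r(n)+2$; for $n=0$ one checks $r(2)\ge 3$ directly. Hence ${\rm Ind}(n)=0$ for all $n$. This is still weaker than recurrence, since a prefix might reappear only as its reversal.

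\textbf{Second gap: the core step is left as a plan.} The step you call the main obstacle is the heart of the theorem. You route it through closedness under reversal, which you give no way to extract ``from the counting'' and which is not needed.

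Once ${\rm Ind}\equiv 0$, for each $n\ge1$ there is exactly one class $t_0$ with $\mathcal T(t_0)=2$. By Lemma~\ref{lem:dvetridyPropP}, every left special $ud$ with $|u|=n$ has $u\in t_0$. The missing ingredient is the $\langle R\rangle$-analogue of Lemma~\ref{lem:2veStejneTride}:
\begin{itemize}
\item If $v\ne R(v)$, an equivalence $xvc\sim_R yR(v)d$ forces $x=d$. So any pairing of $\{0vc,1vc\}$ with $\{0R(v)d,1R(v)d\}$ into two classes would make $d$ equal to both $0$ and $1$.
\item If $v0$ and $v1$ were both left special, then $0v0$, $1v0$, $1v1$ would be pairwise $R$-inequivalent, giving $\mathcal T\ge 3$.
\end{itemize}
Hence there is at most one left special factor of each length $m\ge 2$. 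Since $\mathcal C(m+1)=\sum_{w\in\Lang_m(\uu)}\#\Lext(w)$, in the binary case
$$\Delta\mathcal C(m)=\#\{\text{left special factors of length } m\}-\epsilon_m,$$
where $\epsilon_m=1$ iff the prefix of length $m$ occurs only once. Aperiodicity ($\Delta\mathcal C\ge 1$) then forces both $\epsilon_m=0$, which gives recurrence, and $\Delta\mathcal C(m)=1$, so $\uu$ is Sturmian.

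\textbf{Smaller points.}
\begin{itemize}
\item For Sturmian sequences $r(n)=\lfloor\frac{n+1}{2}\rfloor+1$, not $\lfloor\frac n2\rfloor+1$. This does not affect $r(n+2)=r(n)+1$.
\item In the ternary case, the transfer to $\vv$ should not use $r=\frac12(\mathcal C+\mathcal P_R)$, which presupposes closure under reversal. It suffices that the words $\pi(x)$, $x\in\Lang_m(\vv)$, are pairwise $R$-inequivalent, since their reversals begin with $c$. This gives $\mathcal C_{\vv}(m)\le r(2m)=m+1$, and aperiodicity does the rest.
\end{itemize}
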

 Here, we again focus on binary sequences and the $H$-complexity.  We will show that in the description of aperiodic sequences with the  minimum $H$-complexity,  i.e., where ${\mathcal C}_H(n+2)={\mathcal C}_H(n)+1$ for all $n \in \mathbb N$, a~similar role is played by complementary symmetric Rote sequences. The following lemmas will serve this purpose.
\begin{lemma}\label{lem:2veStejneTride}    Let $u,  v$ be two different factors of length $n\geq 1$ belonging to the same class of $H$-equivalence $t$. Assume that  for some letters  $c,d\in \{0,1\}$ the factors $vc$ and $ud$ are left special.  If $\mathcal{T}(t) = 2$, then  $ud = \overline{vc}$.  
\end{lemma}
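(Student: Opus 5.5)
The plan is to compare the bilateral extensions forced by the two left special factors with the bound $\mathcal T(t)=2$. Then we look at which element of $H$ can realise the resulting equivalences.

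\emph{Step 1 (the two classes of ${\rm Bext}_H(v)$).} Since $vc$ is left special, both $0vc$ and $1vc$ are factors of $\uu$, so they lie in ${\rm Bext}(v)\subseteq {\rm Bext}_H(v)$. The binary group $H$ has Property~$\textswab{P}$ and $n\geq 1$, so Lemma~\ref{lem:dvetridyPropP} gives $0vc\nsim_H 1vc$. Because $\mathcal T(t)=2$, the equivalence classes of $0vc$ and $1vc$ are exactly the two classes occurring in ${\rm Bext}_H(v)$.

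\emph{Step 2 (matching the extensions of $u$).} Since $u\in t$, we have ${\rm Bext}(u)\subseteq {\rm Bext}_H(v)$. As $ud$ is left special, $0ud$ and $1ud$ are factors, and they are non-equivalent by Lemma~\ref{lem:dvetridyPropP}. Hence for each $x\in\{0,1\}$ there are $h_x\in H$ and $a_x\in\{0,1\}$ with $xud=h_x(a_xvc)$.

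\emph{Step 3 (case analysis on $h_x$).} If $h_x$ is a morphism, comparing letters gives $u=h_x(v)$ and $d=h_x(c)$. Since $u\neq v$, we cannot have $h_x=I$, so $h_x=ER$ and $ud=\overline{vc}$, which is the claim. If $h_x$ is an antimorphism, then $h_x(a_xvc)=h_x(c)h_x(v)h_x(a_x)$, which forces $x=h_x(c)$ and $u=h_x(v)$.

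\emph{Step 4 (ruling out two antimorphisms).} It remains to exclude that both $h_0$ and $h_1$ are antimorphisms. In that case $h_0(c)=0\neq 1=h_1(c)$, so $\{h_0,h_1\}=\{R,E\}$ and therefore $u=R(v)=E(v)$. But $E(v)=\overline{R(v)}$, so $R(v)=\overline{R(v)}$, which is impossible for a nonempty word. Thus at least one $h_x$ is a morphism, and Step~3 gives $ud=\overline{vc}$.

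The main point needing care is Step~4: the role of the left special hypothesis is precisely to supply two extensions $0ud$ and $1ud$ with different first letters, which prevents both of them from being matched by antimorphisms.
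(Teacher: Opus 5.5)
Your proof is correct and follows essentially the same route as the paper. Lemma~\ref{lem:dvetridyPropP} separates $0vc$ from $1vc$, and $\mathcal{T}(t)=2$ then forces $0ud$ and $1ud$ into their two classes. The identity is excluded by $u\neq v$, $ER$ gives the claim directly, and two antimorphisms would have to be $R$ and $E$, which yields the impossible $R(v)=E(v)$.

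Your case split on whether $h_x$ is a morphism or an antimorphism is slightly tidier than the paper's split into the two matching scenarios ($0vc\sim_H 0ud$ versus the crossed pairing). It also shows that the non-equivalence $0ud\nsim_H 1ud$, which you mention in Step~2, is never actually used.
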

\begin{proof} 
Clearly,  $0vc, 1vc, 0ud, 1ud \in {\rm Bext}_H(v)$ and by Lemma \ref{lem:dvetridyPropP}, $0vc\nsim_H  1vc$,  $0ud\nsim_H 1ud$.  As ${\rm Bext}_H(v)$ contains only 2 classes,   we have to discuss two scenarios.

\begin{itemize}

  \item $0vc\sim_H 0ud$ and  $1vc\sim_H 1ud$: \quad 
 If both equivalences were realized by the antimorphism $E$, i.e., $0vc = E(0ud) =\overline{d}E(u)1$ and $1vc = E(1ud) =\overline{d}E(u)0$, it would lead to a~contradiction $c=1$ and $c=0$. 
 
 If both equivalences were realized by the antimorphism $R$, i.e., $0vc = R(0ud) =dR(u)0$ and $1vc = R(1ud) =dR(u)1$, it would lead to a~contradiction $c=0$ and $c=1$. 

 Obviously, none of the two equivalences can be realized by $ER$ or by identity. Hence one of the equivalence is realized by $E$  and one by $R$. It gives $u=E(v)$ and $u=R(v)$,  i.e., $v=\overline{v}$, which is a~contradiction, too. 

     \item $1vc\sim_H 0ud$ and  $0vc\sim_H 1ud$: \quad    If one of the equivalences is realized by $ER$, the proof is completed. 
     
     Assume that no equivalence is realized by $ER$. Analogously to the previous case, all other possibilities lead to a contradiction. 
\end{itemize}
\end{proof}

\begin{lemma}\label{lem:simpleObservations1}  Let $wc$ be a left special factor of a binary sequence $\uu$ with  non-empty $w$ and $c \in \{0,1\}$.  Let $t$ denote the $H$-equivalence class containing $w$.  Then $\mathcal{T}(t) \geq 2$. Moreover,   \begin{enumerate} 

\item   \quad 
     if $\mathcal{T}(t)= 2$, then  ${\rm Bext}_H(w) \subset \{g(0wc) : g \in H\} \cup  \{ g(1wc) :  g \in H\}$; 
\item  \quad  if  $\mathcal{T}(t)=2$  and $dw\overline{c}$ occurs in $\uu$ for some letter $d \in \{0,1\}$,  then $w=R(w)$ or $w=E(w)$.   
\end{enumerate}

\end{lemma}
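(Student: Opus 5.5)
The first claim is immediate. Since $wc$ is left special, both $0wc$ and $1wc$ lie in $\Lang(\uu)$, so both belong to ${\rm Bext}(w)\subset{\rm Bext}_H(w)$. The word $w$ is non-empty, so Lemma~\ref{lem:dvetridyPropP} applies ($H$ has Property $\textswab{P}$ on the binary alphabet) and gives $0wc\nsim_H 1wc$. Hence ${\rm Bext}_H(w)$ contains at least two distinct $H$-classes, i.e., $\mathcal T(t)\geq 2$.

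For item 1, assume $\mathcal T(t)=2$. The two classes of $0wc$ and $1wc$ already exhaust the two classes of ${\rm Bext}_H(w)$. Every element of ${\rm Bext}_H(w)$ therefore lies in one of them, so it is of the form $g(0wc)$ or $g(1wc)$ with $g\in H$. This is exactly the claimed inclusion.

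For item 2, assume moreover that $dw\overline c\in\Lang(\uu)$. Then $dw\overline c\in{\rm Bext}(w)\subset{\rm Bext}_H(w)$, so by item 1 we have $dw\overline c=g(xwc)$ for some $g\in H=\{I,E,R,ER\}$ and some $x\in\{0,1\}$. I will go through the four possibilities for $g$.
\begin{itemize}
\item $g=I$ gives last letter $c=\overline c$, which is impossible.
\item $g=ER$ is a morphism exchanging letters, so it maps the central factor $w$ to $\overline w$. Comparing the central parts gives $w=\overline w$, which is impossible for non-empty $w$.
\item $g=R$ gives $R(xwc)=cR(w)x$. Comparing central factors of length $|w|$ yields $w=R(w)$.
\item $g=E$ gives $E(xwc)=\overline c\,E(w)\,\overline x$. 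Comparing central factors yields $w=E(w)$.
\end{itemize}
Only the last two cases survive, which gives the conclusion $w=R(w)$ or $w=E(w)$.

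There is no real obstacle here. The only care needed is in the central-part comparison. Because $g$ is a (anti)morphism permuting letters, the image of a word of length $|w|+2$ with middle block $w$ has middle block $g(w)$, and this lets us read off $g(w)=w$ directly. The boundary letters only serve to exclude $g=I$.
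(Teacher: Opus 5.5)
Your proof is correct and follows the paper's argument: you use $0wc\nsim_H 1wc$ from Lemma~\ref{lem:dvetridyPropP} to get $\mathcal{T}(t)\geq 2$ and item 1, and for item 2 you rule out $I$ by the last letter and $ER$ by the middle block, exactly as the paper does. Your explicit check of the $R$ and $E$ cases only spells out the comparison that the paper leaves implicit.
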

\begin{proof}
Both factors   $0wc$ and $1wc$ occur in $\uu$, but they are not $H$-equivalent by Lemma~\ref{lem:dvetridyPropP}. It follows that $\mathcal{T}(t)\geq 2$. 

If  $\mathcal{T}(t)=2$, then every element of ${\rm Bext}_H(w)$  is either $H$-equivalent to $0wc$ or $1wc$. It is expressed by the inclusion in Item 1. 

If  the factor $dw\overline{c}$ occurs in $\uu$, then  this factor  should be $H$-equivalent to  $0wc$ or $1wc$. As $w \neq \varepsilon$,  the  morphism $ER$ cannot realize  the  $H$-equivalence; and as $\overline c\not =c$, the identity cannot realize it, either.  Hence, either $w=E(w)$ or $w = R(w)$.  
\end{proof}

\begin{lemma}\label{lem:uzavrenostRorE} Let ${\bf x}_L$ be the sequence associated  in Proposition \ref{pro:RSaLS} with an aperiodic recurrent binary sequence $\uu$. Assume there exists $n_0\in \mathbb N$ such that for all $n\geq n_0$ exactly one equivalence class $t_0 \in \langle \uu\rangle_n$ satisfies $\mathcal{T}(t_0)=2$ and $\mathcal{T}(t) = 1$ for every  $t \in \langle \uu\rangle_n$, $t\neq t_0$. Then 

-- either infinitely many prefixes of ${\bf x}_L$ are $R$-palindromes, in this case the language of ${\bf x}_L$  is closed under $R$ 

-- or infinitely many prefixes of ${\bf x}_L$ are $E$-palindromes, in this case the language of ${\bf x}_L$  is closed under $E$.    
    
\end{lemma}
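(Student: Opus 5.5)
The plan is to apply Lemma~\ref{lem:simpleObservations1} to the long prefixes of ${\bf x}_L$. First we show that each such prefix is a palindrome of one of the two kinds, then we pick the kind that occurs infinitely often, and finally we deduce closedness of the language of ${\bf x}_L$.

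\emph{Step 1.} Fix $n\geq n_0$ and let $wc$ be the prefix of ${\bf x}_L$ of length $n+1$. By Proposition~\ref{pro:RSaLS}, $wc$ is left special, so Lemma~\ref{lem:simpleObservations1} gives $\mathcal T(t)\geq 2$ for the class $t$ of $w$. The hypothesis then forces $t=t_0$ and $\mathcal T(t)=2$; in particular $t_0$ is the class of the length-$n$ prefix of ${\bf x}_L$.

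\emph{Step 2.} We want to show that $w$ has an occurrence followed by $\overline c$, i.e.\ that $w\overline c\in\Lang(\uu)$. Item~2 of Lemma~\ref{lem:simpleObservations1} then yields $w=R(w)$ or $w=E(w)$. To find such an occurrence I would argue as follows. Since $\uu$ is aperiodic, $\Delta\mathcal C(n)\geq 1$, so some factor of length $n$ is right special. By Proposition~\ref{pro:RSaLS}(2) we may take the suffix $u$ of ${\bf x}_R$ of length $n$, for which both $uc'$ and $u\overline{c'}$ occur. Then Lemma~\ref{lem:simpleObservations1}, applied to the reversed picture, gives $\mathcal T(t_u)\geq 2$, so $t_u=t_0$ and $u=g(w)$ for some $g\in H$.
\begin{itemize}
\item If $g$ is $I$ or $ER$, the right extensions of $u$ transfer directly, and $w$ (up to complementing) has both right extensions, so $w\overline c$ occurs.
\item If $g$ is an antimorphism, then $w$ is left special in the mirrored sense. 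Here I would use Lemma~\ref{lem:2veStejneTride} together with the inclusion in Item~1 of Lemma~\ref{lem:simpleObservations1}: with only two classes, each bilateral extension of $u$ must be $H$-equivalent to $0wc$ or $1wc$. Comparing the last letters of these extensions forces either an equivalence realised by $R$ or $E$ (which already gives the palindromic property of $w$), or the occurrence of $w\overline c$.
\end{itemize}
This case analysis is the main obstacle, and it must be done carefully for each $g\in H$.

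\emph{Step 3.} Each prefix of ${\bf x}_L$ of length $n\geq n_0$ is now an $R$-palindrome or an $E$-palindrome. Hence one of the two types occurs for infinitely many prefixes, which gives the dichotomy. If infinitely many prefixes are $R$-palindromes, take any factor $f$ of ${\bf x}_L$. It lies inside some palindromic prefix $p$, so $R(f)$ is a factor of $R(p)=p$, hence of ${\bf x}_L$. The $E$ case is identical with $E$ in place of $R$. Thus the language of ${\bf x}_L$ is closed under $R$, respectively under $E$.
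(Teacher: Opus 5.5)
Your Step~2 sets out to prove something false. It claims that for every $n\geq n_0$ the length-$n$ prefix $w$ of $\mathbf{x}_L$ satisfies $w\overline{c}\in\Lang(\uu)$, i.e.\ that every long prefix of $\mathbf{x}_L$ is right special. Step~3 then concludes that every such prefix is an $R$- or $E$-palindrome.

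The Fibonacci sequence $\mathbf f$ refutes this. It is recurrent and aperiodic, and by the paper's Fibonacci example $\mathcal C_H(n+2)=\mathcal C_H(n)+1$ for all $n\geq 4$. Since recurrence gives $\mathcal T(t)\geq 1$ for every class, this is exactly the hypothesis of the lemma with $n_0=4$. Its left special branch is $\mathbf{x}_L=\mathbf f=0100101001001\cdots$. The prefix $w=0100101$ of length $7$ is followed by $c=0$, but $01001011$ contains $11$ and is not a factor, so $w$ is not right special. Moreover $w$ is neither an $R$-palindrome nor, having odd length, an $E$-palindrome.

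Your case analysis breaks exactly here. The right special factor of length $7$ is $u=1010010=R(w)$. An $H$-equivalence realised by $R$ between a bilateral extension of $u$ and one of $w$ only reproduces $u=R(w)$; it says nothing about $w=R(w)$. Likewise, in the $ER$ case you only learn that $\overline{w}$ is right special, not $w$.

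The missing idea is to restrict to the \emph{bispecial} prefixes of $\mathbf{x}_L$ and to show there are infinitely many of them. Suppose some prefix $p$ of $\mathbf{x}_L$ had no right special prefix of $\mathbf{x}_L$ extending it. Then every occurrence of $p$ in $\uu$ would be followed by the same infinite continuation, namely the rest of $\mathbf{x}_L$. Recurrence gives two occurrences $i<j$ of $p$, which forces $\uu$ to be eventually periodic, a contradiction.

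So arbitrarily long prefixes $w$ of $\mathbf{x}_L$ are bispecial. For such $w$, with $wc$ a prefix of $\mathbf{x}_L$, the word $w\overline{c}$ occurs, and by recurrence so does some $dw\overline{c}$. Your Step~1 together with Item~2 of Lemma~\ref{lem:simpleObservations1} then gives $w=R(w)$ or $w=E(w)$. Pigeonhole on these infinitely many bispecial prefixes yields the dichotomy.

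Your closedness argument in Step~3 then goes through unchanged, since it only needs arbitrarily long palindromic prefixes, not all of them. This is precisely the paper's proof, and it needs no analysis of $\mathbf{x}_R$ at all.
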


\begin{proof} Since $\uu$ is recurrent and aperiodic, every factor of $\uu$  is contained in a bispecial factor of $\uu$.   In particular, every  prefix of ${\bf x}_L$ can be prolonged to a prefix  $w$ of ${\bf x}_L$  which is bispecial. It implies that infinitely many prefixes $w$ of ${\bf x}_L$  are bispecial factors.   By Lemma  \ref{lem:simpleObservations1}, $\mathcal{T}(t_0)\geq 2$ for the  equivalence class  $t_0$ containing  $w$. Of course, infinitely many bispecial factors, which are prefixes of ${\bf x}_L$ are longer than $n_0$. Hence by assumption, $\mathcal{T}(t_0) = 2$ and  thus Item 2 of the same lemma forces ${\bf x}_L$ to have infinitely many prefixes which are $R$-palindromes or infinitely many prefixes which are  $E$-palindromes. It confirms the statement. 
\end{proof}

\begin{thm}\label{thm:aperiodicRH} Let $\uu$ be a recurrent binary sequence. Then  ${\mathcal C}_H(n+2) = {\mathcal C}_H(n)+1$ for every $n \in \N$ if and only if  $\uu$ is 
 a complementary symmetric Rote sequence. 
\end{thm}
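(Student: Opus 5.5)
The ``if'' direction is immediate from Theorem~\ref{thm:Rote}: a CS Rote sequence is recurrent, and its $H$-complexity $\lfloor n/2\rfloor+1$ satisfies $\mathcal C_H(n+2)=\mathcal C_H(n)+1$ for every $n\in\N$, including $n=0$. For the converse, assume $\mathcal C_H(n+2)=\mathcal C_H(n)+1$ for all $n$. By Theorem~\ref{thm:Conjectur27RE}, $\uu$ is aperiodic. Since $\uu$ is recurrent, ${\rm Ind}(n)=0$, so $\mathcal T(t)\ge 1$ for every class by Remark~\ref{rem:simpleObservations}. Formula \eqref{eq:vypocetr(n+2)} then forces, for every $n\ge 1$, exactly one class $t_0\in\langle\uu\rangle_n$ with $\mathcal T(t_0)=2$ and $\mathcal T(t)=1$ for all other classes. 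Moreover $\mathcal C_H(0)=\mathcal C_H(1)=1$, so the hypothesis pins down $\mathcal C_H(n)=\lfloor n/2\rfloor+1$ exactly. By Lemma~\ref{lem:simpleObservations1}, every left special factor $wc$ with $|w|=n\ge1$ has $w\in t_0$. Hence all left special factors of length $n+1$ have their length-$n$ prefix in one class of at most four words.

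The core of the argument is to show that for every $n\ge 1$ the left special factors of length $n+1$ are exactly two words, $wc$ and $\overline{wc}$, and that the language is closed under $ER$. Then $\Delta\mathcal C(n)=2$, and with $\mathcal C(1)=2$ we get $\mathcal C(n)=2n$, so $\uu$ is a CS Rote sequence. To bound the left special factors, I would combine Lemma~\ref{lem:2veStejneTride} with Item~1 of Lemma~\ref{lem:simpleObservations1}.
\begin{itemize}
\item If two distinct words $u\neq v$ of $t_0$ carry left special extensions $ud$ and $vc$, then $ud=\overline{vc}$.
\item If a single $w$ were left special with both right extensions $wc$ and $w\overline c$, then $w$ itself would be left special with $\#{\rm Bext}_H(w)$ spanning at least three inequivalent classes, contradicting $\mathcal T(t_0)=2$. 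This case would be checked along the lines of Lemma~\ref{lem:dvetridyPropP}.
\end{itemize}
This shows there are at most two left special factors of each length, and if there are two they are complementary.

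The main obstacle is excluding the case of a unique left special factor for infinitely many (hence, by the prefix structure of ${\bf x}_L$, all large) lengths. That is, we must rule out $\uu$ being Sturmian or quasi-Sturmian with a language not closed under complementation. Here I would use the exact value $\mathcal C_H(n)=\lfloor n/2\rfloor+1$ together with a parity argument. If for large $n$ no factor is $H$-equivalent to a distinct factor via $E$ or $ER$, then $\mathcal C_H$ coincides with the reflection complexity $r(n)=\tfrac12(\mathcal C(n)+\mathcal P_R(n))$. With $\mathcal C(n)=n+\kappa$, this gives $r(n+2)-r(n)=1+\tfrac12(\mathcal P_R(n+2)-\mathcal P_R(n))$, so $\mathcal P_R(n+2)=\mathcal P_R(n)$ for all large $n$. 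The absolute value $r(n)=\lfloor n/2\rfloor+1$ then forces $\mathcal C(n)+\mathcal P_R(n)$ to be $n+2$ for even $n$ and $n+1$ for odd $n$.

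For the Sturmian case ($\kappa=1$, with $\mathcal P_R$ equal to $1$ on even and $2$ on odd lengths), this already fails at odd $n$. For general $\kappa$, I would first try to get the contradiction from $\mathcal P_R(n)\ge 0$ together with $\kappa\ge1$ on odd lengths. If that is insufficient, I would fall back on Lemma~\ref{lem:uzavrenostRorE}. That lemma gives infinitely many $R$- or $E$-palindromic prefixes of ${\bf x}_L$, and hence, by recurrence, occurrences of complements of long left special factors, which produce the second left special factor $\overline{wc}$. Once both $wc$ and $\overline{wc}$ are left special for all lengths, closure under $ER$ follows from recurrence: every factor extends to a left special factor, and the complement of that factor is again left special, so it lies in the language.
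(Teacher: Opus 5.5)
Everything up to the dichotomy is sound. This covers the exact value $\mathcal C_H(n)=\lfloor n/2\rfloor+1$, the unique class $t_0$ with $\mathcal T(t_0)=2$, and your two bullets. The second bullet is correct: fitting $0wc,1wc,0w\overline c,1w\overline c$ into two $H$-classes would force $w$ to be both an $R$- and an $E$-palindrome, a configuration the paper's proof does not spell out. Together these give at most two, complementary, left special factors per length, and the two-factor alternative is finished as in the paper.

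The gap is exactly the step you call the main obstacle. Your parity argument presupposes two things. First, that for large $n$ no factor is $H$-equivalent to a different factor via $E$ or $ER$. Second, that the language is closed under $R$, which you need for $r(n)=\tfrac12(\mathcal C(n)+\mathcal P_R(n))$. Neither is derived, and the first does not follow from having a unique left special factor. Lemma~\ref{lem:uzavrenostRorE} only yields closure under $R$ \emph{or} under $E$. Example~\ref{ex:TMimageFib} is recurrent, has a unique left special factor of each large length, and has its language closed under $E$, with many long factors $E$-equivalent to distinct ones. Your fallback is a non sequitur. Infinitely many $R$-palindromic prefixes of ${\bf x}_L$ do not create occurrences of complements of long factors, as the Fibonacci sequence shows: there ${\bf x}_L=\mathbf f$ has infinitely many palindromic prefixes, yet no long factor's complement occurs.

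What is missing is a further use of $\mathcal T(t_0)=2$ at a well-chosen length.
\begin{enumerate}
\item Every factor lies in a long bispecial factor, which is a prefix of ${\bf x}_L$, so $\mathcal L(\uu)=\mathcal L({\bf x}_L)$. Lemma~\ref{lem:uzavrenostRorE} then makes $\mathcal L(\uu)$ closed under $R$ or under $E$.
\item $\mathcal L(\uu)$ is not closed under $ER$, since otherwise complements of prefixes of ${\bf x}_L$ would be left special. So take the shortest prefix $wc$ of ${\bf x}_L$ with $\overline w\,\overline c\notin\mathcal L(\uu)$.
\item By minimality $\overline w\in\mathcal L(\uu)$, so recurrence gives $a\overline w c\in\mathcal L(\uu)$ for some letter $a$. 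This is a third element of ${\rm Bext}_H(w)$. It must be $H$-equivalent to $0wc$ or $1wc$, and only $R$ or $E$ can realize this.
\item Under closure by $R$, the word $R(w)\overline c$ is a factor, so $R(w)\neq\overline w$. Hence the equivalence is via $E$, forcing $a=\overline c$ and $E(w)=\overline w$. Reversing $\overline c\,\overline w c$ then gives $c\,\overline w\,\overline c\in\mathcal L(\uu)$.
\item The $E$-closed case is symmetric. Either way $\overline w\,\overline c\in\mathcal L(\uu)$, a contradiction.
\end{enumerate}

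Your exact-value idea can also be completed, but it needs three further steps:
\begin{enumerate}
\item establish closure under exactly one of $R,E$;
\item rule out complements of all long factors, using uniform recurrence of recurrent quasi-Sturmian sequences;
\item observe that at odd $n$ the value $\lfloor n/2\rfloor+1$ forces $\kappa=1$, so $\uu$ is Sturmian. In the $E$-closed case this contradicts non-closure under $R$, since Sturmian languages are closed under reversal. In the $R$-closed case it contradicts the fact that Sturmian sequences have two palindromes of each odd length.
\end{enumerate}
That route imports several external results which the paper's argument avoids.
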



\begin{proof} 
\noindent $(\Leftarrow)$: By Theorem~\ref{thm:Rote}, the $H$-complexity of any CS Rote sequence satisfies ${\mathcal C}_H(n)=\lfloor \frac{n}{2} \rfloor+1$, which implies ${\mathcal C}_H(n+2)={\mathcal C}_H(n)+1$ for all $n \in \mathbb N$.\\
\noindent $(\Rightarrow)$: As $\uu$ is recurrent, $\mathcal{T}(t) \geq 1$ for every $H$-equivalence class $t$.   
Formula~\eqref{eq:vypocetr(n+2)} implies that for each $n\in \mathbb N$, there exists exactly one class $t_0$ on factors of length $n$ such that $\mathcal{T}(t_0)=2$, for all  other classes $\mathcal{T}(t)=1$.

By Corollary \ref{cor:periodicity}, $\uu$  is aperiodic. Consider the sequence ${\bf x}_L$  associated in Proposition \ref{pro:RSaLS} to the aperiodic sequence $\uu$. Let $ud$ be a left special factor in $\uu$ such that $u$ is of length $n$ and $u$ is not a prefix of 
${\bf x}_L$. By Lemma \ref{lem:simpleObservations1}, the class containing $u$ has $\mathcal{T}(t)\geq 2$, and thus necessarily, $u \sim_H v$, where $vc$ is a~prefix of ${\bf x}_L$ of the same length as $ud$. By Lemma \ref{lem:2veStejneTride}, $ud = \overline{vc} $.  

We will discuss two possibilities. 
\begin{enumerate}
\item $\uu$ contains infinitely many left special factors, which are not prefixes of ${\bf x}_L$: \quad 
Then for every $n\in \mathbb{N}, \ n\geq 1$, there exist precisely  two left special factors:  
one is  a prefix of  ${\bf x}_L$  and the other one is  a prefix of~$\overline{{\bf x}}_L$. Hence by recurrence of $\uu$, we have $\Delta {\mathcal C}(n)= 2$ for all $n\in \mathbb N, \ n\geq 1$. Consequently, $\mathcal{C}(n)=2n$ for every $n\geq 1$ as $\mathcal{C}(1)=2$. 




It remains to show that the language of $\uu$ is closed under the exchange of letters, i.e., under $ER$. 
Since $\uu$ is recurrent,  every factor $w\in \mathcal{L}(\uu)$ is contained in  a left special factor,  in particular, $w$ is a factor of ${\bf x}_L$  or $\overline{{\bf x}}_L$ and therefore $\overline{w}$ is a factor of $\overline{{\bf x}}_L$  or ${\bf x}_L$. Hence, $\overline{w} \in \mathcal{L}({\bf x}_L) \cup  
\mathcal{L}(\overline{{\bf x}}_L)\subset \mathcal{L}(\uu)$. We conclude that $\uu$ is a CS Rote sequence.

\item $\uu$ contains only finitely many left special factors which are not prefixes of ${\bf x}_L$: \quad
For sufficiently large  $n$, there exists only one left special factor, i.e., $\mathcal{C}(n+1) = \mathcal{C}(n)+1$ and $\uu$ is quasi-Sturmian. The unique left special factor of length $n$ is a prefix of ${\bf x}_L$. 
As $\uu$ is recurrent, every $u \in \mathcal{L}(\uu)$ is contained in a sufficiently long bispecial factor. Hence,   $u \in \mathcal{L}({\bf x}_L)$. In other words, $\mathcal{L}(\uu) = \mathcal{L}({\bf x}_L)$.  By Lemma \ref{lem:uzavrenostRorE}, $\mathcal{L}(\uu)$ is closed under $R$ or under $E$. 
Of course, $\mathcal{L}(\uu)$  cannot be  closed simultaneously under $E$ and $R$, as this would imply that each prefix of $\overline{\mathbf{x}}_L$ is left special - contrary to the assumption.

From the fact that $\mathcal{L}(\uu) = \mathcal{L}({\bf x}_L)$ is not closed under $ER$, we deduce that
there are only finitely many prefixes $w$ of ${\bf x}_L$ such that    $\overline w$ belongs to ${\mathcal L}(\mathbf u)$. 

Find the shortest prefix $wc$  of ${\bf x}_L$   with $c\in \{0,1\}$ such that $\overline w\overline c$ is not a factor of $\mathbf u$.  Obviously, $w\not=\varepsilon$. As $\uu$ is recurrent and $\overline{w}$ is a factor of $\uu$,  there exists $a \in \{0,1\}$
 such that $a\overline{w}c$ belong to the language of $\uu$. Let us denote by $t_0$ the equivalence class containing $w$. Necessarily,  $\mathcal{T}(t_0)=2$. Since $0wc, 1wc, a\overline{w}c \in {\rm Bext}_H(w) $  and $0wc\nsim_H 1wc$, we deduce 
\begin{equation}\label{eq:dvaEkviv}
 a\overline{w}c \sim_H  0wc \quad \text{or} \quad \ a\overline{w}c \sim_H  1wc\,. 
\end{equation}
Since $w$ is non-empty and $c$ is the last letter of all three factors    $0wc, 1wc$ and $ a\overline{w}c$,  none of the previous $H$-equivalences can be realized by identity or by $ER$.

Let us distinguish two scenarios.

\begin{enumerate}

\item The language of $\mathbf u$ is closed under $R$.

 By closedness under reversal, $cR(w)0$ and  $cR(w)1$ belong to the language of $\uu$, whereas $\overline{w}\overline{c}$ does not, hence $R(w)\neq \overline{w}$. 

 Therefore,  one of the $H$-equivalences in $\eqref{eq:dvaEkviv}$ is realized by $E$, which implies $a = \overline{c}$ and $\overline{w} = E(w)$.  Note that $E(u) = R(\overline{u})$ for every  $u \in \{0,1\}^*$. Hence, $$a\overline{w}c = \overline{c}\,\overline{w}c =  \overline{c}\,E(w)c =  \overline{c}\,R(\overline{w})c \in \mathcal{L}(\uu).$$  The closedness of the language under  $R$ gives that $c\overline{w} \overline{c}$ belongs to the language - a contradiction, since   $\overline{w} \overline{c}$  is not a factor of $\uu$.  
 
\item The language of $\mathbf u$ is closed under $E$.

 By closedness under $E$, both  $\overline{c}E(w)0$ and  $\overline{c}E(w)1$ belong to the language of $\uu$, whereas $\overline{w}\overline{c}$ does not, hence $E(w)\neq \overline{w}$. 

 Therefore,  one of the $H$-equivalences in $\eqref{eq:dvaEkviv}$ is realized by $R$, which implies $a = c$ and $\overline{w} = R(w)$.  Note that $R(u) = E(\overline{u})$ for every  $u \in \{0,1\}^*$. Hence, $$a\overline{w}c = c\,\overline{w}c = c\,R(w)c =  c\,E(\overline{w})c \in \mathcal{L}(\uu).$$  The closedness of the language under  $E$ gives that $\overline{c}\overline{w}\overline{c}$ belongs to the language - it contradicts the fact  that  $\overline{w}\overline{c} \notin \mathcal{L}(\uu)$.
 
\end{enumerate}
\end{enumerate}

\noindent Both scenarios $(a)$ and $(b)$ in Item 2  of the proof lead to a contradiction, therefore  only the possibility discussed in Item 1 of the proof takes place. 
\end{proof}
Using the proof of Theorem \ref{thm:aperiodicRH} and Lemma \ref{lem:uzavrenostRorE}, we can easily derive the following proposition.

\begin{proposition}\label{pro:quasirote}  Let $\uu$ be a~recurrent binary sequence.  
If  ${\mathcal C}_H(n+2) = {\mathcal C}_H(n)+1$ holds true for every  sufficiently large  $n\in \N$, then there exists a~constant $\kappa$ such that $\uu$  satisfies  one of the following descriptions: 
\begin{enumerate}
    \item $\mathcal{C}(n) = 2n + \kappa$ for  every sufficiently large $n$ and the language of $\uu$ is closed under $H$; 

    \item $\mathcal{C}(n) = n + \kappa$ for  every sufficiently large $n$ and the language of $\uu$ is closed under $R$, but not under $E$; 

    \item  $\mathcal{C}(n) = n + \kappa$ for  every sufficiently large $n$ and the language of $\uu$ is closed under $E$, but not under $R$.  
\end{enumerate}
\end{proposition}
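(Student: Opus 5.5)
The plan is to rerun the proof of Theorem~\ref{thm:aperiodicRH} with every statement restricted to lengths $n\geq n_1$, where $n_1$ is the threshold from the hypothesis.

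\emph{Setup.} Fix $n_1$ such that ${\mathcal C}_H(n+2)={\mathcal C}_H(n)+1$ for all $n\geq n_1$. Since this differs from ${\mathcal C}_H(n)$, Corollary~\ref{cor:periodicity} shows that $\uu$ is aperiodic. Since $\uu$ is recurrent, $\mathcal T(t)\geq 1$ for every class $t$. By \eqref{eq:vypocetr(n+2)}, for each $n\geq n_1$ there is then exactly one class $t_0\in\langle\uu\rangle_n$ with $\mathcal T(t_0)=2$, and all other classes have $\mathcal T(t)=1$. This is precisely the hypothesis of Lemma~\ref{lem:uzavrenostRorE}.

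\emph{Left special factors.} Let ${\bf x}_L$ be the sequence from Proposition~\ref{pro:RSaLS}. Take a left special factor $ud$ with $|u|=n\geq n_1$ and $u$ not a prefix of ${\bf x}_L$. By Lemma~\ref{lem:simpleObservations1}, both the class of $u$ and the class of the length-$n$ prefix $v$ of ${\bf x}_L$ have $\mathcal T\geq 2$. Uniqueness of $t_0$ forces $u\sim_H v$, and Lemma~\ref{lem:2veStejneTride} then gives $ud=\overline{vc}$. Hence for large $n$ the left special factors of length $n$ lie among the prefixes of ${\bf x}_L$ and $\overline{{\bf x}}_L$, and each has exactly two left extensions (binary alphabet). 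We now split as in the theorem.

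\emph{Case 1: infinitely many left special factors are not prefixes of ${\bf x}_L$.} Because prefixes of left special factors are left special, every prefix of $\overline{{\bf x}}_L$ is left special. So for large $n$ there are exactly two left special factors, and recurrence gives $\Delta{\mathcal C}(n)=2$, hence ${\mathcal C}(n)=2n+\kappa$. Closedness under $ER$ follows as before: every factor sits inside a long left special factor, hence is a factor of ${\bf x}_L$ or $\overline{{\bf x}}_L$. Lemma~\ref{lem:uzavrenostRorE} gives closedness of $\Lang({\bf x}_L)$ under $R$ or under $E$. Every factor of $\uu$ lies in $\Lang({\bf x}_L)\cup\Lang(\overline{{\bf x}}_L)$, both are contained in $\Lang(\uu)$, and $\Lang(\overline{{\bf x}}_L)$ is the $ER$-image of $\Lang({\bf x}_L)$. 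Since $\Lang({\bf x}_L)$ is closed under $R$ or $E$, and $ER$ is available, the whole language $\Lang(\uu)$ is closed under $R$, $E$ and $ER$, that is, under $H$.

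\emph{Case 2: only finitely many such factors.} For large $n$ there is exactly one left special factor of each length, so ${\mathcal C}(n)=n+\kappa$, and recurrence gives $\Lang(\uu)=\Lang({\bf x}_L)$. Lemma~\ref{lem:uzavrenostRorE} gives closedness under $R$ or under $E$. Closedness under both would also give closedness under $ER$, and then every prefix of $\overline{{\bf x}}_L$ would be left special, contradicting the case assumption. This yields items 2 and 3.

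\emph{Main obstacle.} The delicate step is that Lemmas~\ref{lem:2veStejneTride} and~\ref{lem:uzavrenostRorE} only use $\mathcal T$-information at lengths $\geq n_1$. I must check that every left special factor and every bispecial prefix invoked is long enough; this holds because infinitely many bispecial prefixes exceed $n_1$. Unlike in Theorem~\ref{thm:aperiodicRH}, the contradiction argument of Case 2 is not needed, since quasi-Sturmian behaviour is permitted here.
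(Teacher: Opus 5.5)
Your proposal is correct and follows essentially the paper's route: rerun the proof of Theorem~\ref{thm:aperiodicRH} only at lengths $n\ge n_1$, drop the final contradiction in the quasi-Sturmian case, and in the $2n+\kappa$ case combine $ER$-closure with Lemma~\ref{lem:uzavrenostRorE} to obtain closedness under all of $H$. One step is left implicit, as it is in the paper: you must exclude a left special factor $v\overline{c}$ with $vc$ a prefix of ${\bf x}_L$ and $|v|\ge n_1$; this is immediate, because ${\rm Bext}(v)$ would then contain all four words $avb$, which fall into at least three $H$-classes, contradicting $\mathcal T\le 2$.
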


The following examples illustrate that each possibility described in Items 1-3 of Proposition \ref{pro:quasirote}  can occur.

\begin{example} Let us show that the Fibonacci sequence $\mathbf f$ satisfies   $${\mathcal C}_H(n+2) = {\mathcal C}_H(n)+1 \quad \text{ for all $n \in \mathbb{N}$ with the only exception $n =3$}.$$  
The Fibonacci sequence is Sturmian and the language of any Sturmian sequence is closed under reversal.
Let us notice that $\mathbf f$ does not contain $11$, every factor of $\mathbf f$ of length $n\geq 6$ contains $00$, and the only factor of length 5 without $00$ is $01010$ and $E(01010)=10101$ is not a factor of $\mathbf f$.  Hence  $u\sim_H v$ implies that $u=v$ or $u = R(v)$ for all factors of length $\geq 5$. Consequently, for $n\geq 5$, the $H$-complexity coincides with the reflection complexity and thus we can use the result of \cite{AlCaLiShSt2025}. 
The reader can easily check that 

$\mathcal{C}_H(0)=1$,  as  $\mathcal{L}_0({\mathbf f}) = \{\varepsilon\}$

$\mathcal{C}_H(1)=1$,  as  $\mathcal{L}_1({\mathbf f})  = \{0,1\}$

 $\mathcal{C}_H(2)=2$,  as   $\mathcal{L}_2({\mathbf f})  = \{01,10,00\}$

 $\mathcal{C}_H(3)=2$,  as   $\mathcal{L}_3({\mathbf f})  = \{010,101,001,100\}$

  $\mathcal{C}_H(4)=3$,  as   $\mathcal{L}_4({\mathbf f})  = \{0101,1010,1001,0100, 0010\}$  

  $\mathcal{C}_H(n)=r(n) = 1+\lfloor  \frac{n+1}{2}\rfloor$  for all $n\geq 5, \ n \in \N$.
  
\medskip

\noindent This example illustrates case 2 from Proposition~\ref{pro:quasirote} as the factor complexity of $\mathbf f$ satisfies ${\mathcal C}(n)=n+1$ for all $n \in \mathbb N$. 
\end{example}

\begin{example}\label{ex:TMimageFib} Consider $\uu = \pi({\mathbf f})$, where ${\mathbf f}$ is the Fibonacci sequence and 

$$ \pi: \left\{\begin{array}{ll}
    0\mapsto 01\,;\\
    1\mapsto 10\,.
    \end{array}\right. 
$$

It is not difficult to see that if $w$ is an $R$-palindrome, then $\pi(w)$ is an $E$-palindrome. 
On the one hand, as the language of $\mathbf f$ contains infinitely many $R$-palindromes, the language of $\uu$ contains infinitely many $E$-palindromes. 
By uniform recurrence of $\uu$, it means that the language of $\uu$ is closed under $E$.
On the other hand, the language of $\uu$ is not closed under $R$: no factor of $\uu$ of length $\geq 9$ has its reversal in the language of $\uu$.
Consequently, for $n\geq 9$, the $H$-complexity coincides with the $\langle E \rangle$-complexity.
For the lengths $n\geq 9$, 
\begin{itemize}
\item the factors of length $n=2k$ in $\uu$ arise by applying $\pi$ to the factors of length $k$ and $k+1$ in $\mathbf f$ (in the second case we cut the first and the last letter) and there are three $E$-palindromes among them (arising from palindromes of length $k$ and $k+1$ in $\mathbf f$), therefore
$$\mathcal{C}(2k)=(k+1)+(k+2)=2k+3\quad \text{and}\quad \mathcal{C}_H(2k)=\mathcal{C}_{\langle E\rangle}(2k)=k+3\,;$$ 
\item the factors of length $n=2k+1$ in $\uu$ arise by applying $\pi$ to the factors of length $k+1$ in $\mathbf f$ and cutting once the first letter, once the last letter and there are no $E$-palindromes of odd length, therefore
$$\mathcal{C}(2k+1)=2(k+2)=2k+4\quad  \text{and} \quad \mathcal{C}_H(2k+1)=\mathcal{C}_{\langle E\rangle}(2k+1)=k+2\,.$$ 
\end{itemize}
This example illustrates case 3 of Proposition~\ref{pro:quasirote}, as altogether, we have for $n\geq 9$, $${\mathcal C}(n)=n+3\quad \text{and}\quad \mathcal{C}_H(n)=\left\{\begin{array}{ll}
   \frac{n}{2}+3 & \text{for $n$ even} \,;\\
  \frac{n+3}{2} & \text{for $n$ odd} \,.
    \end{array}\right. $$
\end{example} 

\begin{example} Let $\uu$ be a~CS Rote sequence corresponding to the Fibonacci sequence. Consider ${\bf v}= \pi(\uu) $, where  
$$ \pi: \left\{\begin{array}{ll}
    0\mapsto 01110\,;\\
    1\mapsto 10001\,.
    \end{array}\right. 
$$
Since $\uu$ contains infinitely many $R$- and $E$-palindromes and when we apply $\pi$ to a~$\vartheta$-palindrome it is again a~$\vartheta$-palindrome, we get by uniform recurrence that the language of $\vv$ is closed under $H$.

For each factor $u$ of $\vv$ of length $n\geq 5$ there exists a unique factor $v$ and a~unique $i\in\{0,1,2,3,4\}$ such that the factor $u$ occurs at the position $i$ in $\pi(v)$, where $|v|=\lceil \frac{n+i}{5}\rceil$.
Since $\uu$ is the Rote sequence with the factor  complexity $2n$,  the factor complexity of $\vv$  satisfies  ${\mathcal C}(n)=\sum_{i=0}^4 2\lceil \frac{n+i}{5}\rceil=2n+8.$
Moreover, since the CS Rote sequence contains two $R$-palindromes of every length and two $E$-palindromes of every even length, we get, for $n\geq 5$,
$\mathcal{C}_H(n)= \lfloor n/2 \rfloor + 3$.
This example illustrates case 1 of Proposition~\ref{pro:quasirote}.

\end{example}

\section{Open questions} 
Let us formulate the questions that remained unanswered here and which, in our opinion, deserve further investigation.

\begin{enumerate}
    \item   Sequences whose language is closed under $H$ and whose factor complexity is $2n+ \kappa$ appeared naturally in Proposition \ref{pro:quasirote}. It would be very helpful  to find a characterization of such ``CS quasi-Rote sequences'', analogous to the description of quasi-Sturmian sequences in the following theorem. See~\cite{Paul} for the proof for uniformly recurrent sequences and~\cite{Coven} for the general proof.
It has been rediscovered several times, more details may be found in~\cite{Al2000}.
\begin{thm}\label{thm:cassaigne} A sequence $\uu$ over an alphabet $\mathcal{A}$ is quasi-Sturmian if and only if $\uu$ can be written as $\uu = p \varphi({\mathbf v})$, where $p \in \mathcal{A}^*$,  $\mathbf v$ is a Sturmian sequence over $\{a,b\}$ and  $\varphi:  \{a,b\}^* \mapsto \mathcal{A}^*$  is a non-periodic morphism (non-periodic means that $\varphi(ab)\neq \varphi(ba)$). 
\end{thm}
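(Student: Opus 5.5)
The statement is the classical characterization of quasi-Sturmian sequences (Cassaigne's theorem, with proofs in \cite{Paul} and \cite{Coven}). I would prove the two directions separately, with the reverse direction the substantial one.

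\emph{The direction $(\Leftarrow)$.} Prepending the finite word $p$ changes $\mathcal C(n)$ by at most a bounded amount. Moreover, for large $n$ it only adds factors that overlap $p$, and these are eventually absorbed: once $n$ is large, every factor overlapping the prefix either occurs later or contributes a constant number of new factors. So it suffices to treat $\varphi(\mathbf v)$ with $\varphi$ non-periodic.

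For a Sturmian $\mathbf v$ and a non-periodic $\varphi$, I would show that for $n$ large, left special factors of $\varphi(\mathbf v)$ of length $n$ correspond to images of left special factors of $\mathbf v$. Concretely, let $s$ be the longest common suffix of $\varphi(a)$ and $\varphi(b)$; this is finite because $\varphi(ab)\neq\varphi(ba)$. Then a long left special factor must have the form $s\varphi(w)x$, with $w$ left special in $\mathbf v$ and $x$ a prefix of $\varphi(w')$. Since $\mathbf v$ has exactly one left special factor of each length and exactly two left extensions, $\varphi(\mathbf v)$ has exactly one left special factor of each large length, with exactly two extensions. It is also recurrent, so $\Delta\mathcal C(n)=1$ eventually. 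This gives $\mathcal C(n)=n+\kappa$.

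\emph{The direction $(\Rightarrow)$.} Suppose $\mathcal C(n)=n+\kappa$ for $n\ge n_0$. Then $\Delta\mathcal C(n)=1$ eventually, so $\uu$ has eventually exactly one right special factor of each length, with exactly two right extensions. These form a left-infinite branch, as in Proposition~\ref{pro:RSaLS}. First I discard a finite prefix $p$ so that the remainder is recurrent. This is possible because the eventually-one-special-factor structure forces every factor occurring only finitely often to lie within a bounded prefix.

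Next I pick a long right special factor $w$ and consider the return words to $w$. The uniqueness and binary branching of right special factors of all larger lengths forces exactly two return words, $r_a$ and $r_b$. This is the key step: a third return word would create a second branching and push $\Delta\mathcal C$ up to $2$. The sequence then decomposes as $\uu=p\,\varphi(\mathbf v)$ with $\varphi(a)=r_a$, $\varphi(b)=r_b$, and $\mathbf v$ the derived sequence. The map $\varphi$ is non-periodic, since otherwise $\uu$ would be eventually periodic, contradicting $\Delta\mathcal C>0$. Finally, $\mathbf v$ is aperiodic and binary, and the special factors of $\mathbf v$ correspond bijectively to those of $\uu$ of large length, so $\mathcal C_{\mathbf v}(n+1)-\mathcal C_{\mathbf v}(n)=1$ for all $n$. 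Hence $\mathbf v$ is Sturmian.

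\emph{Main obstacle.} The hardest step is this last transfer of complexity from $\uu$ to the derived sequence $\mathbf v$ for all $n$, not just large $n$. One needs the return words chosen long enough, via a bispecial $w$ of length $\ge n_0$, so that every special factor of $\mathbf v$ lifts injectively. I would handle it by taking $w$ bispecial and using the standard correspondence between bispecial factors of $\mathbf v$ and bispecial extensions of $w$ in $\uu$.
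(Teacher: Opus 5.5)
The paper does not prove this theorem; it only quotes it from Paul and Coven. So your sketch has to stand on its own.

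\emph{The direction $(\Rightarrow)$.} This is the standard derived-sequence argument and is fine in outline. At a bispecial factor $w$ of length $\geq n_0$ in the recurrent tail, the Rauzy graph of order $|w|$ is a figure-eight. This gives exactly two return words, ending in distinct letters, and that is what makes left special factors $x$ of $\mathbf v$ correspond to left special factors $\varphi(x)w$ of $\uu$. The removal of a finite prefix to obtain a recurrent tail is only asserted, not proved. One way to prove it uses the language of factors occurring infinitely often, which has at most one right special factor of each length $\geq n_0$. If it has none from some length on, then $\uu$ is eventually periodic. Otherwise, for large $N$, each factor of length $N$ occurring finitely often has a unique right extension, occurring at the same positions. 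Hence the finite set of starting positions of such factors stops growing with $N$.

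\emph{The direction $(\Leftarrow)$.} The genuine gap is the claim that a long left special factor of $\varphi(\mathbf v)$ has the form $s\varphi(w)x$ with $w$ left special in $\mathbf v$. There are two problems.

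First, your $s$ is the wrong object. A common suffix of two finite words is trivially finite, so non-periodicity is not used, and the description fails. Take $\varphi(a)=1$, $\varphi(b)=011$ and $\mathbf v=abaab\cdots$ the Fibonacci word. Then $s=1$, but $1\varphi(w)$ is always preceded by $1$. The long left special factors are in fact $11\varphi(w)x$: they are preceded by $0$ when $11$ is the tail of $\varphi(b)$, and by $1$ when $11$ is the last letter of a block followed by $\varphi(a)$. The right $s$ is the longest common suffix of ${}^{\omega}\varphi(a)$ and ${}^{\omega}\varphi(b)$, which is finite precisely because $\varphi(ab)\neq\varphi(ba)$. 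Equivalently, replace $\varphi$ by a conjugate whose two images end in distinct letters. This only prepends a finite word to $\varphi(\mathbf v)$, which your first paragraph handles.

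Second, and more seriously, even with the right $s$ your claim presupposes something you never prove. It assumes that the occurrences of a long factor $u$ witnessing its different left extensions are cut into blocks $\varphi(a),\varphi(b)$ at the same places. Without this, a misaligned occurrence could create extra left extensions or extra left special factors, and then the Sturmian structure of $\mathbf v$ says nothing about $\Delta\mathcal C$.

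After the conjugation, $\{\varphi(a),\varphi(b)\}$ is a suffix code. So two occurrences sharing one cut share all cuts to its left. Then the letter preceding $u$ is forced unless position $0$ is a cut, in which case $u=\varphi(w)x$ with $w$ left special. What remains is the actual content of this direction, a synchronization lemma: a sufficiently long factor of $\varphi(\mathbf v)$ has no two occurrences with disjoint sets of cuts. This is where non-periodicity of $\varphi$ and the bounded runs of both letters in the Sturmian word $\mathbf v$ must enter. Possible tools are a Fine--Wilf/defect-type argument for the two-element code $\{\varphi(a),\varphi(b)\}$, or recognizability of elementary morphisms on aperiodic points.
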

Note that if $\uu$ is recurrent and  quasi-Sturmian, then $\uu$ is uniformly recurrent. Let us explain why this is so: 

Since any Sturmian word $\mathbf v$ is uniformly recurrent,   the language of $\varphi(\mathbf v)$ is uniformly recurrent, too. As $\uu = p \varphi({\mathbf v})$ is recurrent, each of its factors $u$ occurs in $\uu $  infinitely many times. With a finite number of exceptions, all occurrences of $u$ in $\uu$ are found in $\varphi(\mathbf v)$. Hence, there are only finitely many return words to $u$ in $\uu$. It means that $\uu$ is uniformly recurrent, too.

\medskip

Are the ``CS quasi-Rote sequences'' uniformly recurrent, too?

   \item The aperiodic sequences with the minimum reflection complexity were characterized in Theorem~\ref{thm:Rminimum} without the assumption of recurrence of the sequence. 
   By contrast, in Theorem~\ref{thm:aperiodicRH} we characterize the aperiodic sequences with the minimum $H$-complexity under the assumption of recurrence. Can this assumption be dropped?
    \item  What can be said about the $H$-complexity of a purely morphic sequence whose language is closed under $H$? Note that every such sequence is fixed by a morphism from a class defined in  \cite{AmMaPe}. 
    \item  Fulfillment of the inequality   \ 
    $ \mathcal{C}_G(n+2)\geq \mathcal{C}_G(n)+1$ for every positive integer $n$ characterizes aperiodic sequences if the group $G$ possesses Property $\textswab{P}$,  as shown in Proposition \ref{pro:neperiodicity}.   Can aperiodicity be characterized even if the group does not possess this property?

\item The group $H$ and the group generated by the reflection $R$  possess Property $\textswab{P}$. Aperiodic sequences with the minimum generalized reflection complexity are known for these two groups; see Theorems  \ref{thm:Rminimum} and \ref{thm:aperiodicRH}. Which aperiodic sequences have the minimum $G$-complexity for other groups with Property $\textswab{P}$?

\end{enumerate}

\end{document}